\newtheorem{theorem}{Theorem}
\newtheorem{lemma}[theorem]{Lemma}
\newtheorem{definition}[theorem]{Definition}
\theoremstyle{remark}\newtheorem{remark}[theorem]{Remark}
\newtheorem{proposition}[theorem]{Proposition}
\newcommand{\R}{{\mathbb R}}
\newcommand{\Z}{{\mathbb Z}}
\newcommand{\C}{{\mathbb C}}
\newcommand{\T}{{\mathbb T}}
\begin{document}

{\let\thefootnote\relax\footnote{Date: \today. 

\textcopyright 2019 by the authors. Faithful reproduction of this article, in its entirety, by any means is permitted for noncommercial purposes.}}

\title{Unconditional uniqueness of higher order nonlinear Schr\"odinger equations.}

\subjclass[2010]{35A01, 35A02, 35D30, 35J30.} 
\keywords{Normal form method, Modulation spaces, Unconditional uniqueness, Higher order nonlinear Schr\"odinger.}

\author{F. Klaus}
\address{Friedrich Klaus, Department of Mathematics, Institute for Analysis, Karlsruhe Institute of Technology (KIT), 76128 Karlsruhe, Germany }
\email{friedrich.klaus@kit.edu}

\author{P. Kunstmann}
\address{Peer Kunstmann, Department of Mathematics, Institute for Analysis, Karlsruhe Institute of Technology (KIT), 76128 Karlsruhe, Germany }
\email{peer.kunstmann@kit.edu}

\author{N. Pattakos}
\address{Nikolaos Pattakos, Department of Mathematics, Institute for Analysis, Karlsruhe Institute of Technology (KIT), 76128 Karlsruhe, Germany }
\email{nikolaos.pattakos@kit.edu}

\begin{abstract}
{We show the existence of weak solutions in the extended sense of the Cauchy problem for the cubic fourth order nonlinear Schr\"odinger equation with initial data $u_{0}\in X$, where $X\in\{M_{2,q}^{s}(\R), H^{\sigma}(\T), H^{s_{1}}(\R)+H^{s_{2}}(\T)\}$ and $q\in[1,2]$, $s\geq0$, or $\sigma\geq0$, or $s_{2}\geq s_{1}\geq0$. Moreover, if $M_{2,q}^{s}(\R)\hookrightarrow L^{3}(\R)$, or if $\sigma\geq\frac16$ or if $s_{1}\geq\frac16$ and $s_{2}>\frac12$ we show that the Cauchy problem is unconditionally wellposed in $X$. Similar results hold true for all higher order nonlinear Schr\"odinger equations and mixed order NLS due to a factorization property of the corresponding phase factors. For the proof we employ the normal form reduction via the differentiation by parts technique and build upon our previous work.}
\end{abstract}

\maketitle
\pagestyle {myheadings}

\begin{section}{introduction and main results}
\markboth{\normalsize F. Klaus, P. Kunstmann and N. Pattakos }{\normalsize  Higher order NLS and differentiation by parts}

We consider the Cauchy problem associated to the cubic fourth order nonlinear Schr\"odinger equation, also known as the cubic biharmonic NLS, given by 
\begin{equation}
\label{maineq}
\begin{cases} i\partial_{t}u-\partial_{x}^{4}u\pm|u|^{2}u=0\\
u(0,x)=u_{0}(x)\\
\end{cases}
\end{equation}
with initial data $u_{0}\in X$ for $X\in\{M_{2,q}^{s}(\R), H^{s}(\T), H^{s_{1}}(\R)+H^{s_{2}}(\T)\}$. The biharmonic NLS provides a canonical model for nonlinear partial differential equations of super-quadratic order. The study of biharmonic NLS goes back to \cite{KAR} and \cite{KARS} where the partial differential equation was introduced to take into account the role of small fourth-order dispersion terms in the propagation of intense laser beams in a bulk medium with Kerr nonlinearity (for applications of higher order NLS, such as sixth and eighth order NLS, see \cite{CKAA}, \cite{KXM},  \cite{SGA} and \cite{YHC}). 

A large amount of work has been devoted to the Cauchy problem \eqref{maineq} with initial data $u_{0}$ in the Sobolev spaces $H^{s}(\R)$ or $H^{s}(\T)$. In the continuous setting solutions $u$ to this problem enjoy mass and energy conservation
\begin{equation}
\label{masscon}
\|u(t,\cdot)\|_{L^{2}(\R)}=\|u_{0}\|_{L^{2}(\R)},
\end{equation}
\begin{equation}
\label{energycon}
E(u(t,\cdot))\coloneqq\frac12\int_{\R}|\Delta u|^{2}\ dx\ \mp\ \frac1{4}\int_{\R}|u|^{4}\ dx=E(u_{0}),
\end{equation}
and it is known that in the mass subcritical cases (with nonlinearity $|u|^{\alpha-1}u$, $\alpha\in[1,1+\frac8{d})$)  the Cauchy problem \eqref{maineq} is globally wellposed in $L^{2}(\R^d)$ via Strichartz type estimates (similar results hold in $H^{2}(\R^d)$ for the energy subcritical cases), see \cite{FIP} as well as \cite{BAKS}, \cite{BL}, \cite{PAU}, \cite{PS} and the references therein. 

In the periodic setting it is known that the Cauchy problem \eqref{maineq} is globally wellposed in $H^{s}(\T)$ for $s>-\frac13$, see \cite{OT} and \cite{OW}, where the proof is done via the short-time Fourier restriction norm method. For more results we refer the interested reader to \cite{ChP}, \cite{CT}, \cite{OW1} and the references therein.

From \cite{BGOR} and \cite{MNRT} it is known that the (semi)-group $S(t)=e^{it\Delta^{2}}$, $t\in\R$, defined as a Fourier multiplier operator with symbol
\begin{equation}
\label{semist}
\mathcal F(S(t))(\xi)\coloneqq e^{it\xi^{4}}
\end{equation}
is not bounded on $M_{p,q}^{s}(\R)$ (for the definition of these modulation spaces see Section \ref{Prelim776}) unless $p=2$ in which case it is an isometry. If in addition we assume that either $q=1$ and $s\geq0$, or $q>1$ and $s>\frac1{q'}$, then the modulation space is a Banach algebra. Hence, for initial data $u_{0}\in M_{2,q}^{s}(\R)$ an easy Banach contraction argument implies that the Cauchy problem \eqref{maineq} is locally wellposed with the solution $u$ being the fixed point of the operator
\begin{equation}
\label{operT}
T(u)\coloneqq S(t)u_{0}\pm i\int_{0}^{t}S(t-\tau)|u|^{2}u\ d\tau
\end{equation} 
in the space $M(R,T)\coloneqq\{u\in C([0,T],M_{2,q}^{s}(\R)): \|u\|\leq R\approx2\|u_{0}\|_{M_{2,q}^{s}}\}$ for $T>0$ sufficiently small. We should also mention that in \cite{CP1} it was shown that $S(t)$ is bounded from $M_{p',q}^{s}(\R)$ into $M_{p,q}^{s}(\R)$ for $p\geq2$ and as a result small data global existence was obtained still in the case that the modulation spaces are Banach algebras.

One of the goals of this paper is to consider similar questions in the case where the modulation space $M_{2,q}^{s}(\R)$ does not belong to the previously mentioned Banach algebra family, i.e. in the case where $q>1$ and $s\in[0,\frac1{q'}]$. This will be achieved with the use of the differentiation by parts technique which was inspired by the periodic case in \cite{GKO} and was used in \cite{CHKP1}, \cite{CHKP2} and \cite{NP} to study similar questions for the cubic NLS in one dimension
\begin{equation}
\label{NLSusual1}
\begin{cases} i\partial_{t}u-\partial_{x}^{2}u\pm|u|^{2}u=0 &,\ (t,x)\in\mathbb R^{2}\\
u(0,x)=u_{0}(x) &,\ x\in\mathbb R.\\
\end{cases}
\end{equation}
Here let us remark that the (semi)-group of \eqref{NLSusual1}, namely the Schr\"odinger operator $e^{it\Delta}$, is bounded on all modulation spaces $M_{p,q}^{s}(\R)$, $p,q\in[1,\infty]$, $s\in\R$ and not only in the special case $p=2$, see again \cite{BGOR}, \cite{MNRT}.

In the periodic setting, as in \cite{BIT} or \cite{GKO}, the differentiation by parts technique transforms the PDE into a countable system of ODEs for the Fourier coefficients of the solution. In the approach described in \cite{CHKP1}, \cite{CHKP2} and \cite{NP} the authors replaced the Fourier coefficients of periodic functions by the isometric decomposition operators, $\Box_{k}$, in order to have a similar localization in the Fourier space. Using these "box" operators for localization yields a unified approach to the periodic and the continuous settings. According to this, a proof using normal form reduction via differentiation by parts for initial data $u_{0}\in H^{s}(\T)$ can be transformed to a proof for initial data $u_{0}\in M_{2,q}^{s}(\R)$, $q\in[1,2]$. This is even possible for initial data in the "tooth problem" space $H^{s}(\R)+H^{s}(\T)$ (we refer to \cite{CHKP2} for the modifications to be made and also for the explanation why we call this a "tooth problem"). The second goal of this paper is to emphasize these relations. Taking this into account we shall only present the proofs of the main Theorems \ref{th1} and \ref{mainyeah} in the case of $u_{0}\in M_{2,q}^{s}(\R)$. A more detailed explanation is given in Remark \ref{explncases}. 

As it was done in \cite{NP}, in order to give a meaning to solutions of the biharmonic NLS in $C([0,T], M_{2,q}^{s}(\R))$ and to the nonlinearity $\mathcal N(u):=u\bar{u}u$ we need the following definitions which first appeared in \cite{C1}, \cite{C2} where power series solutions to the cubic NLS were studied (see also \cite{GUB} for similar considerations for the KdV).

\begin{definition}
\label{def1}
A sequence of Fourier cutoff operators is a sequence of Fourier multiplier operators $\{T_{N}\}_{N\in \mathbb N}$ on $\mathcal S'(\mathbb R)$ with multipliers $m_{N}:\R\to\C$ such that
\begin{itemize}
\item $m_{N}$ has compact support on $\R$ for every $N\in \mathbb N$,
\item $m_{N}$ is uniformly bounded, i.e. $\sup_{x,N}|m_{N}(x)|<\infty$,
\item $\lim_{N\to\infty}m_{N}(x)=1$, for any $x\in\R$. 
\end{itemize}
\end{definition}

\begin{definition}
\label{def2}
Let $u\in C([0,T],M_{2,q}^{s}(\R)).$ We say that $\mathcal N(u)$ exists and is equal to a distribution $w\in\mathcal S'((0,T)\times\R)$ if for every sequence $\{T_{N}\}_{N\in\mathbb N}$ of Fourier cutoff operators we have
\begin{equation}
\label{wknows}
\lim_{N\to\infty}\mathcal N(T_{N}u)=w,
\end{equation}
in the sense of distributions on $(0,T)\times\R$.
\end{definition}

\begin{definition}
\label{def3}
We say that $u\in C([0,T],M_{2,q}^{s}(\R))$ is a weak solution in the extended sense of NLS (\ref{maineq}) if the following are satisfied 
\begin{itemize}
\item $u(0,x)=u_{0}(x)$,
\item the nonlinearity $\mathcal N(u)$ exists in the sense of Definition \ref{def2},
\item $u$ satisfies (\ref{maineq}) in the sense of distributions on $(0,T)\times\R$, where the nonlinearity $\mathcal N(u)=u|u|^{2}$ is interpreted as above.
\end{itemize}
\end{definition}
Our main result which guarantees existence of weak solutions in the extended sense is the following
\begin{theorem}
\label{th1}
Let $1\leq q\leq2$ and $s\geq0.$ For $u_{0}\in M_{2,q}^{s}(\mathbb R)$ there exists a weak solution in the extended sense $u\in C([0,T];M_{2,q}^{s}(\mathbb R))$ of NLS (\ref{maineq}) with initial condition $u_{0}$, where the time $T$ of existence depends only on $\|u_{0}\|_{M_{2,q}^{s}}.$ Moreover, the solution map is locally Lipschitz continuous. 
\end{theorem}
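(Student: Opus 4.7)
\emph{Plan.} I follow the strategy developed in \cite{NP, CHKP1, CHKP2} for the second order Schr\"odinger equation: pass to the interaction variable $v(t) := S(-t)u(t)$, localize in Fourier space with the unit-cube operators $\Box_k$ used in the definition of $M_{2,q}^s(\R)$, and run a normal form reduction via differentiation by parts. On the Fourier side, $v$ formally satisfies
\[
\partial_t \widehat{\Box_k v}(t,\xi) = \mp i\sum_{k_1,k_2,k_3}\int_{\xi_1-\xi_2+\xi_3=\xi} e^{it\Phi}\, \chi_{k,k_1,k_2,k_3}\, \widehat{\Box_{k_1}v}(\xi_1)\,\overline{\widehat{\Box_{k_2}v}(\xi_2)}\,\widehat{\Box_{k_3}v}(\xi_3)\, d\sigma,
\]
the direct analogue of the system treated for the usual cubic NLS in \cite{CHKP1}, but with the biharmonic phase $\Phi(\xi_1,\xi_2,\xi_3) = \xi_1^4 - \xi_2^4 + \xi_3^4 - (\xi_1-\xi_2+\xi_3)^4$ in place of the quadratic one. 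Since $q \leq 2$, the $\Box_k v$ are $\ell^q$-summable weighted in $\langle k\rangle^s$, and every estimate will be carried out on this countable system, uniformly in the periodic and non-periodic cases as emphasized in the introduction.

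The key algebraic input is a factorization of $\Phi$. Since $\Phi$ vanishes on $\{\xi_1=\xi_2\}$ (where the convolution constraint forces $\xi=\xi_3$) and on $\{\xi_2=\xi_3\}$ (where $\xi=\xi_1$), polynomial division gives
\[
\Phi(\xi_1,\xi_2,\xi_3) = (\xi_1-\xi_2)(\xi_2-\xi_3)\,Q(\xi_1,\xi_2,\xi_3),
\]
for a polynomial $Q$ of degree two. The same argument factors the higher-order phases $\xi_1^{2m}-\xi_2^{2m}+\xi_3^{2m}-\xi^{2m}$ in an identical way, which is why the proof carries over uniformly to all higher order NLS. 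On the supports of the $\Box_{k_i}$ the two linear factors are comparable to $k_1-k_2$ and $k_2-k_3$, and these are precisely the modulation-frequency weights that the differentiation by parts argument produces. With this in hand I split the trilinear sum at a threshold $|\Phi|=N=N(\|u_0\|_{M_{2,q}^s})$: the resonant part $\{|\Phi|\le N\}$ is bounded directly, exploiting that either $(\xi_1-\xi_2)(\xi_2-\xi_3)$ or $Q$ must be small, while on the non-resonant part I integrate by parts in time through $e^{it\Phi}=(i\Phi)^{-1}\partial_t e^{it\Phi}$. The resulting boundary terms and quintilinear remainder both carry the factor $1/\Phi = 1/[(\xi_1-\xi_2)(\xi_2-\xi_3)Q]$, and the two linear denominators exactly compensate the loss of summation in the modulation indices, yielding a closed trilinear estimate in $C([0,T];M_{2,q}^s)$ on a time $T=T(\|u_0\|_{M_{2,q}^s})$.

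The theorem then follows by a standard approximation argument. Regularize $u_0$ by Fourier truncation, $u_0^n := \sum_{|k|\le n}\Box_k u_0$, so that $u_0^n\in M_{2,1}^{s}(\R)$ and the Banach-algebra Picard iteration produces smooth solutions $u^n$; the a priori estimate above is uniform in $n$ on $[0,T]$, with $T$ determined by $\|u_0\|_{M_{2,q}^s}$ alone. Applying the same trilinear bound to differences $u^n-u^m$ shows that $(u^n)$ is Cauchy in $C([0,T];M_{2,q}^s)$, and applying it to Fourier-cutoff triples $(T_N u,T_N u,T_N u)$ gives $\mathcal N(T_N u)\to \mathcal N(u)$ in $\mathcal D'((0,T)\times\R)$ for every admissible sequence, so the limit $u$ is a weak solution in the extended sense of Definition \ref{def3}; local Lipschitz continuity of the data-to-solution map is obtained by running the same difference estimate with two distinct initial data. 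The main difficulty I expect, exactly as in the second order case, is closing the trilinear estimate in the range $q\in(1,2]$, $s\in[0,1/q']$, where $M_{2,q}^s(\R)$ is \emph{not} a Banach algebra: the two denominators $1/(\xi_1-\xi_2)$ and $1/(\xi_2-\xi_3)$ have to be distributed among the three inputs so that the subsequent $\ell^q$-summation in the box indices costs at most a polynomial factor in $N$, which is then absorbed by taking $T$ small.
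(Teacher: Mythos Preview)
Your outline correctly identifies the main ingredients---interaction picture, $\Box_k$-localization, the factorization $\Phi=(\xi_1-\xi_2)(\xi_2-\xi_3)Q$, and the approximation by smooth data---and these match the paper. The genuine gap is the sentence ``the resulting boundary terms and quintilinear remainder both carry the factor $1/\Phi\ldots$, yielding a closed trilinear estimate''. A \emph{single} round of differentiation by parts does not close the argument in the non-algebra range. After integrating by parts once on $\{|\Phi|>N\}$, the quintilinear remainder comes from substituting the equation for $\partial_t v_{n_1}$ (and $\bar v_{n_2}$, $v_{n_3}$); this introduces three \emph{new} frequency indices $(m_1,m_2,m_3)$ with their own phase $\phi_2=\Phi_4(n_1,m_1,m_2,m_3)$, and the single denominator $1/\phi_1$ you have earned controls the outer sum over $(n_1,n_2,n_3)$ but not the fresh inner sum. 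The paper (following \cite{GKO,NP}) therefore iterates \emph{infinitely}: at step $J$ one splits according to whether the cumulative phase $\tilde\phi_{J+1}=\phi_1+\cdots+\phi_{J+1}$ is small relative to $\max(|\tilde\phi_J|,|\phi_1|)^{1-1/100}$ (the sets $C_J$), estimates directly on $C_J$, and differentiates by parts again on $C_J^c$. The resulting $(2J{+}1)$-linear operators are indexed by ordered ternary trees; convergence of the series over $J$ requires showing that the double-factorial count of trees is beaten by the product $\prod_j(2j+3)^{-3/2}$ coming from the lower bounds $|\tilde\phi_j|\gg(2j+1)^3N^{1-1/100}$. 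None of this machinery is present in your sketch, and without it the quintilinear remainder is simply not summable when $M_{2,q}^s$ fails to be an algebra.

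Two secondary points are also missing. First, you note that $Q$ has degree two but do not use that $Q$ is \emph{strictly positive} away from the origin; this gives $|\Phi_4|\sim n_{\max}^2|\xi-\xi_1||\xi-\xi_3|\gtrsim|\Phi_2|^2$, which is what bounds the cardinality of the near-resonant set $\{|\Phi_4|\le N,\ n_1\not\approx n\not\approx n_3\}$ by $o(N^{1/2+})$ (via the divisor bound applied to $\Phi_2$) rather than $o(N^{1+})$, and what makes the sums $\sum|\tilde\phi_j|^{-q'}$ in the tree estimates finite. Second, the truly resonant terms where one linear factor vanishes ($n_1\approx n$ or $n_3\approx n$, so $\Phi$ can be exactly zero) must be separated \emph{before} introducing the threshold $N$ and handled by an $\ell^{q}\hookrightarrow\ell^{2}$ argument; this is precisely where the restriction $q\le2$ in the theorem arises, and it is not captured by your single block $\{|\Phi|\le N\}$.
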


\begin{remark}
The restriction on the range of $q$ appears when estimating the resonant operator $R_{2}^{t}$. See Lemma \ref{lem} in Section \ref{firststeps}.
\end{remark}

The next theorem is about the unconditional wellposedness of NLS (\ref{maineq}) in modulation spaces, that is, uniqueness of solutions in $C([0,T],M_{2,q}^{s}(\R))$ without intersecting with any auxiliary function space (see \cite{TK} where this notion first appeared).
\begin{theorem}
\label{mainyeah}
For $u_{0}\in M_{2,q}^{s}(\R)$, with either $s\geq0$ and $1\leq q\leq\frac32$ or $\frac32<q\leq2$ and $s>\frac23-\frac1{q}$, the solution $u$ with initial condition $u_{0}$ constructed in Theorem \ref{th1} is unique in $C([0,T],M_{2,q}^{s}(\R))$. 
\end{theorem}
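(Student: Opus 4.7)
The plan is to reduce uniqueness to a multilinear difference estimate obtained from the normal form reduction already used to construct the solution in Theorem~\ref{th1}. Passing to the interaction representation $\tilde u(t) = S(-t) u(t)$ and iterating the differentiation-by-parts procedure $N$ times on the equation $\partial_{t}\tilde u = \mathcal N(\tilde u)$, with phase $\Phi(k_{1}, k_{2}, k_{3}, k) = k_{1}^{4} - k_{2}^{4} + k_{3}^{4} - k^{4}$, one obtains an identity of the form
\begin{equation*}
\tilde u(t) \;=\; u_{0} \;+\; \sum_{j=1}^{N}\bigl(\mathcal B_{j}(\tilde u(t)) - \mathcal B_{j}(u_{0})\bigr) \;+\; \int_{0}^{t}\!\mathcal R(\tilde u(s))\,ds \;+\; \int_{0}^{t}\!\mathcal N^{(N)}(\tilde u(s))\,ds,
\end{equation*}
where $\mathcal B_{j}$ is a $(2j+1)$-linear boundary operator carrying $j$ small divisors $1/\Phi$, $\mathcal R$ is the resonant trilinear contribution of Lemma~\ref{lem}, and $\mathcal N^{(N)}$ is the $N$-fold iterated non-resonant integrand of multiplicity $2N+1$.

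My first step would be to show that any weak solution $u$ in the extended sense satisfies this identity on $[0,T]$: on the Fourier-truncated approximation $T_{M}u$ the identity holds classically since $T_{M}u$ is smooth, and the key is to check that each of $\mathcal B_{j}$, $\mathcal R$ and $\mathcal N^{(N)}$ extends from smooth functions to a bounded multilinear map on $M_{2,q}^{s}(\R)$, so that Definition~\ref{def2} lets us pass to the limit $M\to\infty$. With both $u$ and $v$ satisfying the identity I would subtract them, set $w = u-v$ and $R = \|u\|_{L^{\infty}_{T} M_{2,q}^{s}} + \|v\|_{L^{\infty}_{T} M_{2,q}^{s}}$, and, by multilinearity, obtain on any subinterval $[0,T']\subset[0,T]$ a bound
\begin{equation*}
\|w\|_{L^{\infty}_{T'} M_{2,q}^{s}} \;\leq\; \Bigl(\sum_{j=1}^{N} C_{j} R^{2j} + C\, T'^{\alpha} R^{2N}\Bigr)\, \|w\|_{L^{\infty}_{T'} M_{2,q}^{s}}
\end{equation*}
for some $\alpha>0$. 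Choosing $N$ large enough so that the boundary contributions are summable with $\sum_{j} C_{j} R^{2j} < \tfrac12$ (exploiting the geometric decay of $C_{j}$ across generations), and then shrinking $T'$ to absorb the time-integral term, one forces $w\equiv 0$ on $[0,T']$; a standard continuation argument then propagates uniqueness to all of $[0,T]$.

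The main obstacle is to establish the multilinear estimates
\begin{equation*}
\|\mathcal B_{j}(f_{1}, \ldots, f_{2j+1})\|_{M_{2,q}^{s}} + \|\mathcal N^{(N)}(f_{1}, \ldots, f_{2N+1})\|_{M_{2,q}^{s}} \;\leq\; C_{j} \prod_{i} \|f_{i}\|_{M_{2,q}^{s}}
\end{equation*}
directly in the solution space, with no auxiliary norm and with constants $C_{j}$ that decay fast enough in $j$. Decomposing each $f_{i}$ via the box operators $\Box_{k}$ and using the divisors $1/\Phi$ from the fourth-order phase to sum over the non-resonant frequency set, a H\"older-type argument asks for a triple $L^{3}$ product. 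This is precisely available in the announced regime through the chain $M_{2,q}^{s}(\R) \hookrightarrow M_{3,q}^{s}(\R) \hookrightarrow M_{3,3/2}^{0}(\R) \hookrightarrow L^{3}(\R)$, which holds for $s\geq0$ when $1\leq q\leq 3/2$ and for $s > \tfrac23 - \tfrac1{q}$ when $3/2 < q \leq 2$. The fourth-order dispersion enters crucially both in guaranteeing summability of $\sum 1/|\Phi|$ on the non-resonant manifold and in yielding the geometric decay of the constants $C_{j}$ that allows the iteration to be closed.
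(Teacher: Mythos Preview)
Your overall strategy coincides with the paper's: pass to the interaction representation, run the differentiation-by-parts machinery, show that \emph{any} solution in $C([0,T],M_{2,q}^{s})$ satisfies the resulting normal-form identity, and then subtract two solutions with the same data. The paper carries this out by letting the iteration depth $J\to\infty$ (so the remainder vanishes by Lemma~\ref{finafinafina}) and invoking the Lipschitz estimate \eqref{done4222}; your finite-$N$ version with a time-integral remainder is an equivalent packaging.

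There is, however, a genuine gap in your justification step. You write that ``on the Fourier-truncated approximation $T_{M}u$ the identity holds classically since $T_{M}u$ is smooth.'' But $T_{M}u$ is \emph{not} a solution of the equation: the truncated function satisfies $\partial_{t}(T_{M}\tilde u)=T_{M}\mathcal N(\tilde u)$, not $\partial_{t}(T_{M}\tilde u)=\mathcal N(T_{M}\tilde u)$. The normal-form identity is produced by repeatedly substituting $\partial_{t}v_{n_{j}}$ from the equation, and that substitution is simply unavailable for $T_{M}u$. Passing to the limit $M\to\infty$ in the individual multilinear pieces does not recover it.

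Relatedly, you have misplaced the role of the embedding $M_{2,q}^{s}\hookrightarrow L^{3}$. In the paper the multilinear bounds (Lemmata~\ref{fir1}, \ref{finaal}, \ref{finaal2}) hold for all $1\le q\le 2$, $s\ge 0$ without any $L^{3}$ input; they come from duality and the phase factorization \eqref{relofPhis}, not from a triple $L^{3}$ product. The $L^{3}$ embedding is used for something else entirely: it forces $|u|^{2}u\in C([0,T],L^{1})$, hence $\partial_{t}v_{n}\in C([0,T],L^{1})$ as an honest function, which (i) legitimizes each differentiation-by-parts step via Lemma~\ref{didi}, and (ii) gives the remainder bound $\|\partial_{t}v_{n}\|_{l^{\infty}L^{2}}\lesssim\|v\|_{M_{2,q}^{s}}^{3}$ in Lemma~\ref{finafinafina}. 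This is precisely what replaces your flawed truncation argument: once $u\in L^{3}$, the identity can be derived \emph{directly} for $u$ itself, with no approximation. Your proposal has the right ingredients and the right threshold on $(s,q)$, but you should reroute the $L^{3}$ embedding from the multilinear estimates (where it is not needed) to the justification of the identity and the control of the remainder (where it is essential).
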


\begin{remark}
\label{explncases}
Having stated Theorems \ref{th1} and \ref{mainyeah} for $u_{0}\in M_{2,q}^{s}(\R)$ let us explain what happens in the cases of $H^{s}(\T)$, $s\geq0$ and $H^{s_{1}}(\R)+H^{s_{2}}(\T)$, $s_{2}\geq s_{1}\geq0$. In the latter case we consider functions on $\mathbb T=\R/\mathbb Z$ as periodic functions on $\R$.

In the periodic setting, Definition \ref{def1} remains the same, whereas in Definition \ref{def2} the limit is taken in the sense of distributions on $(0,T)\times\T$. Then the proof follows the calculations presented in the next sections where instead of considering the quantities $\Box_{n}u$ we have the Fourier coefficients of the periodic function $u$, i.e.
\begin{equation}
\label{fourper}
u_{n}(t)\coloneqq\int_{0}^{1}e^{-2\pi i nx}u(t,x)dx,\ n\in\Z.
\end{equation}
In \cite{OW} the authors study \eqref{maineq} in the periodic setting but they are interested in the energy and therefore, they use differentiation by parts for quadrilinear forms whereas we aim for existence of solutions and thus, we have to study trilinear operators. The corresponding tree structures and the estimates for the multilinear expressions in \cite{OW} are different from the one in the present paper. Existence of local solutions is proved in \cite{OW} by the Fourier restriction norm method and even includes negative Sobolev spaces. The fact that the periodic cubic fourth order NLS is unconditionally wellposed in $H^{s}(\T)$ for $s\geq\frac16$ was already observed in \cite{OW} without including the proof.

In the tooth problem space setting, that is of initial data $u_{0}=v_{0}+w_{0}\in H^{s_{1}}(\R)+H^{s_{2}}(\T)$, having the result for the cubic biharmonic periodic NLS with initial data $w_{0}\in H^{s_{2}}(\T)$ at hand (from the previous paragraph), we consider the cubic modified biharmonic NLS given by 
\begin{equation}
\label{nonperpe}
\begin{cases} i\partial_{t}v-\partial_{x}^{4}v\pm G(w,v)=0 &,\ (t,x)\in\R\times\R\\
v(0,x)=v_{0}(x)\in H^{s_{1}}(\R) &,\ x\in\mathbb R\, ,\\
\end{cases}
\end{equation}
where $G(w,v)$ is the nonlinearity
\begin{equation}
\label{nonli}
G(w,v)=|w+v|^{2}(w+v)-|w|^{2}w=|v|^{2}v+v^{2}\bar{w}+w^{2}\bar{v}+2w|v|^{2}+2v|w|^{2}.
\end{equation}
Then Definition \ref{def1} remains the same and Definitions \ref{def2} and \ref{def3} are the same as Definitions 4 and 5 given in \cite{CHKP2}. The proof of the existence of a weak solution in the extended sense $v$ of \eqref{nonperpe} can then be done by modifying the calculations presented in the next sections and combining them with what has been done in \cite{CHKP2}.
\end{remark}

\begin{remark}
\label{unconposed}
Notice that for $q=2$ in Theorem \ref{mainyeah} we obtain that the cubic fourth order NLS is unconditionally wellposed in $H^{s}(\R)$ for $s\geq\frac16$.
\end{remark}

\begin{remark}
\label{sixthhigher23} 
Theorems \ref{th1} and \ref{mainyeah} (and Remarks \ref{explncases}, \ref{unconposed}) remain true for the following mixed order nonlinear Schr\"odinger equations
\begin{equation}
\label{higheryes}
\begin{cases} i\partial_{t}u-\sum_{j=1}^{M}(-1)^{j}\epsilon_{j}\partial_{x}^{2j}u\pm|u|^{2}u=0\\
u(0,x)=u_{0}(x),\\
\end{cases}
\end{equation}
where $M\in\mathbb N$, $\epsilon_{j}\in\R_{\geq0}$ for $j\in\{1,\ldots,M\}$ and $\sum_{j=1}^{M}\epsilon_{j}>0$. This is the case because the phase factors $\Phi_{2j}$, $j\in\{1,\ldots,M\}$ (see \eqref{aaa23} for their definition) enjoy a special factorization property (see Proposition \ref{factoriPhi375}). For a more detailed argument we refer to Section \ref{higherNLS478}. The question whether such a factorization exists had been asked in \cite[Remark 1.5]{GO}. 
\end{remark}

\begin{remark}
A recent preprint, \cite{NKI}, deals with unconditional uniqueness of other dispersive PDE on the multidimensional torus with the use of differentiation by parts in a more abstract framework. It seems not to be clear how to adapt this to the situation of the present paper.

We should also mention \cite{HESO} where the authors use a different approach to unconditional uniqueness of the cubic NLS \eqref{NLSusual1} which applies to various spatial domains. The main idea is to exploit the relation of solutions of the cubic NLS to solutions of the Gross-Pitaevskii hierarchy.

\end{remark}

The paper is organized as follows: In Section \ref{Prelim776} we present the preliminaries and Section \ref{firststeps} contains the first few steps of the iteration process together with the estimates for the first resonant and non-resonant operators that appear. Section \ref{treesandinduction39} describes the tree notation and the induction step and Section \ref{cbgg342} finishes the argument of the proofs of Theorems \ref{th1} and \ref{mainyeah}. Finally, in Section \ref{higherNLS478} we deal with the higher order NLS \eqref{higheryes}.

The following notation will be used throughout the paper: For a number $1\leq p\leq\infty$ we write $p'$ for its dual exponent, that is the number that satisfies $\frac1{p}+\frac1{p'}=1$. For two quantities $A, B$ (they can be functions or numbers) whenever we write $A\lesssim B$ we mean that there is a universal constant $C>0$ such that $A\leq CB$. For a set $A$ we will use $\#(A)$ and $|A|$ to denote its cardinality.

\end{section}

\begin{section}{preliminaries}
\label{Prelim776}

Let us denote by $S(\R)$ the Schwartz class and by $S'(\R)$ the tempered distributions.
\begin{definition}
\label{deffs1}
Let $Q_{0}=[-\frac12, \frac12)$ and its translations $Q_{k}=Q_{0}+k$ for all $k\in\mathbb Z$. Consider a partition of unity $\{\sigma_{k}=\sigma_{0}(\cdot-k)\}_{k\in\mathbb Z}\subset C^{\infty}(\mathbb R)$ satisfying 
\begin{itemize}
\item
$ \exists c > 0: \,
\forall \eta \in Q_{0}: \,
|\sigma_{0}(\eta)| \geq c$,
\item
$
\mbox{supp}(\sigma_{0}) \subseteq \{\xi\in\R:|\xi|<1\}$.
\end{itemize}
Note that this implies $1=\sigma_0(0)=\sigma_k(k)$ for all $k\in\Z$. 
Given a partition of unity as above, we define the isometric decomposition operators (box operators) as
\begin{equation}
\label{iso}
\Box_{k} := \mathcal F^{(-1)} \sigma_{k} \mathcal F, \quad
\left(\forall k \in \Z \right).
\end{equation}
Then the norm of a tempered distribution $f\in S'(\R)$ in the modulation space $M^{s}_{p,q}(\mathbb R)$, $s\in\mathbb R, 1\leq p,q\leq\infty$, is 
\begin{equation}
\label{def}
\|f\|_{M^{s}_{p,q}}:=\Big\|\Big\{\langle k\rangle^{s}\|\Box_{k}f\|_{L^{p}(\R)}\Big\}_{k\in\Z}\Big\|_{l^{q}(\Z)},
\end{equation}
where we denote by $\langle k\rangle\coloneqq(1+|k|^{2})^{\frac12}$ the Japanese bracket and 
\begin{equation}
\label{defmod62h7}
M_{p,q}^{s}(\R)\coloneqq\Big\{f\in S'(\R):\|f\|_{M_{p,q}^{s}}<\infty\Big\}.
\end{equation}
\end{definition}
It can be proved that different choices of such sequences of functions $\{\sigma_{k}\}_{k\in\mathbb Z}$ lead to equivalent norms in $M^{s}_{p,q}(\mathbb R)$. When $s=0$ we denote the space $M^{0}_{p,q}(\mathbb R)$ by $M_{p,q}(\mathbb R)$. In the special case where $p=q=2$ we have $M_{2,2}^{s}(\R)=H^{s}(\R)$ the usual Sobolev spaces
\begin{equation}
\label{Sobspace}
H^{s}(\R)\coloneqq\Big\{f\in S'(\R)\ :\ \|f\|_{H^{s}(\R)}\coloneqq\Big(\int_{\R}\langle\xi\rangle^{2s}|\hat{f}(\xi)|^{2}d\xi\Big)^{\frac12}<\infty\Big\}.
\end{equation}
In this paper we will use that for $s>1/q'$ and $1\leq p, q\leq\infty$, the embedding 
\begin{equation}
\label{yeye}
M_{p,q}^{s}(\R)\hookrightarrow C_{b}(\R)=\{f:\R\to\C:\ f\ \mbox{continuous and bounded}\},
\end{equation}
and for $\Big(1\leq p_{1}\leq p_{2}\leq \infty$, $1\leq q_{1}\leq q_{2}\leq\infty$, $s_{1}\geq s_{2}\Big)$ or $\Big(1\leq p_{1}\leq p_{2}\leq \infty$, $1\leq q_{2}<q_{1}\leq\infty$, $s_{1}>s_{2}+\frac1{q_{2}}-\frac1{q_{1}}\Big)$ the embedding
\begin{equation}
\label{yeye233}
M_{p_{1}, q_{1}}^{s_{1}}(\R)\hookrightarrow M_{p_{2}, q_{2}}^{s_{2}}(\R),
\end{equation}
are both continuous and can be found in \cite[Proposition $6.8$ and Proposition $6.5$]{FEI}. Also, by \cite{BH} it is known that for any $1<p\leq\infty$ we have the embedding $M_{p,1}(\R)\hookrightarrow L^{p}(\R)\cap L^{\infty}(\R)$ which together with the fact that $M_{2,2}(\R)=L^{2}(\R)$ and interpolation, imply that for any $p\in[2,\infty]$ we have the embedding $M_{p,p'}(\R)\hookrightarrow L^{p}(\R).$ Later in the proof of Theorem \ref{mainyeah} we will use this fact for $p=3$, that is 
\begin{equation}
\label{hhh}
M_{3,\frac32}(\R)\hookrightarrow L^{3}(\R).
\end{equation}

The following facts will be useful in the calculations presented in the next sections.

Firstly, notice that for $S(t)=e^{it\Delta^{2}}$ the biharmonic Schr\"odinger (semi)-group we have the equality:
\begin{equation}
\label{Sch}
\|S(t)f\|_{2}=\|f\|_{2},
\end{equation}
Secondly, we need the multiplier estimate (see \cite[Proposition 1.9]{BH}), known as Bernstein's inequality:

\begin{lemma}
\label{Bern}
Let $1 \leq p \leq \infty$ and $\sigma \in C^{\infty}_{c}(\R)$. Then the multiplier operator $T_\sigma: S(\R) \to S'(\R)$ defined by
\begin{equation*}
(T_\sigma f) = \mathcal F^{-1}(\sigma \cdot \hat{f}), \quad
\forall f \in S(\R)
\end{equation*}
is bounded on $L^p(\R)$ and
\begin{equation*}
\|T_{\sigma}\|_{L^p(\R)\to L^p(\R)} \lesssim\|\check\sigma\|_{L^{1}(\R)}.
\end{equation*}
\end{lemma}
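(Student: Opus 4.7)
The strategy is the classical one: realize the multiplier operator as convolution with the inverse Fourier transform of the symbol, then apply Young's convolution inequality. This reduces the entire statement to two standard ingredients (the convolution theorem on $\mathcal{S}(\R)$ and the $L^1 \ast L^p \to L^p$ bound), together with a routine density argument to pass from Schwartz functions to general $L^p$.

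First I would establish the pointwise-convolution representation: for $f \in \mathcal{S}(\R)$, write
\begin{equation*}
T_\sigma f = \mathcal{F}^{-1}(\sigma \cdot \hat{f}) = \mathcal{F}^{-1}(\sigma) \ast f = \check{\sigma} \ast f,
\end{equation*}
where the middle equality is the convolution theorem, valid because $\sigma \in C^\infty_c(\R) \subset \mathcal{S}(\R)$ so that both $\check\sigma$ and $f$ lie in $\mathcal{S}(\R)$, hence their convolution is defined pointwise and its Fourier transform equals the product of the Fourier transforms. Note that $\check\sigma \in \mathcal{S}(\R)$, so in particular $\check\sigma \in L^1(\R)$ (whence the right-hand side of the asserted bound is finite).

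Next I would invoke Young's convolution inequality: for any $1 \leq p \leq \infty$ and $f \in \mathcal{S}(\R)$,
\begin{equation*}
\|\check\sigma \ast f\|_{L^p(\R)} \leq \|\check\sigma\|_{L^1(\R)} \, \|f\|_{L^p(\R)}.
\end{equation*}
Combining with the representation above yields the desired operator-norm bound for $T_\sigma$ acting on $\mathcal{S}(\R)$.

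Finally I would extend the estimate from $\mathcal{S}(\R)$ to all of $L^p(\R)$ by density when $1 \leq p < \infty$, defining $T_\sigma f$ on $L^p(\R)$ as the unique continuous extension; for $p = \infty$ one instead uses that $\check\sigma \ast f$ makes sense directly for $f \in L^\infty(\R)$ and the same Young bound applies. The only mild point to watch is the Fourier-transform normalization convention, which may introduce a harmless constant absorbed into the implicit constant in $\lesssim$; no estimate in the proof is delicate, so I expect no genuine obstacle, only bookkeeping.
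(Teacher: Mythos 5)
Your proof is correct: writing $T_\sigma f=\check\sigma\ast f$ and applying Young's inequality, with the density extension for $p<\infty$ and the direct convolution bound for $p=\infty$, is exactly the standard argument. The paper itself does not prove this lemma but simply cites \cite[Proposition 1.9]{BH}, and your argument is the classical proof underlying that reference, so there is nothing to compare beyond noting that your treatment is complete and the normalization constant is indeed harmlessly absorbed into the $\lesssim$.
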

An immediate consequence is that for $1\leq p_{1}\leq p_{2}\leq\infty$ we have
\begin{equation}
\label{Bern1}
\|\Box_{k}f\|_{p_{2}}\lesssim\|\Box_{k}f\|_{p_{1}},
\end{equation}
where the implicit constant is independent of $k$ and the function $f$ (for a proof see e.g. \cite{NP}). 

Lastly, let us recall the following number theoretic fact (see \cite[Theorem 315]{HW}) which is going to be used in the proof of Theorem \ref{th1}. 

\begin{proposition}
Given an integer $m$, let $d(m)$ denote the number of divisors of $m$. Then we have
\begin{equation}
\label{num}
d(m)\lesssim e^{c\frac{\log m}{\log\log m}}=o(m^{\epsilon}),
\end{equation}
for all $\epsilon>0$. 
\end{proposition}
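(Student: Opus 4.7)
The plan is to exploit multiplicativity of $d$. Writing $m=\prod_i p_i^{a_i}$ gives $d(m)=\prod_i(1+a_i)$ while $\log m=\sum_i a_i\log p_i$. The second equality in the conclusion is immediate: $e^{c\log m/\log\log m}=m^{c/\log\log m}$ and $c/\log\log m\to 0$ as $m\to\infty$, so only the first bound requires an argument.

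First I would establish a coarse bound of the form $d(m)\leq C(\epsilon)\,m^\epsilon$ for every fixed $\epsilon>0$ by a termwise comparison. The key elementary inequality is that for any prime $p\geq 2^{1/\epsilon}$ one has $p^{\epsilon a}\geq 2^a\geq 1+a$, so that $(1+a)/p^{\epsilon a}\leq 1$ for such primes; while for $p<2^{1/\epsilon}$ elementary calculus shows that $(1+a)/p^{\epsilon a}$ attains its maximum over $a\geq 0$ at $a^\ast=1/(\epsilon\log p)-1$ with value $p^\epsilon/(e\,\epsilon\log p)$. Multiplying these prime-by-prime estimates over the primes dividing $m$ yields
\[
\frac{d(m)}{m^\epsilon}\leq C(\epsilon):=\prod_{p<2^{1/\epsilon}}\frac{p^\epsilon}{e\,\epsilon\log p}.
\]

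The next step is to optimize $\epsilon$ as a function of $m$. Taking logarithms in $C(\epsilon)$ and invoking the Chebyshev-type estimates $\sum_{p\leq X}\log p\lesssim X$ and $\pi(X)\lesssim X/\log X$ gives $\log C(\epsilon)\lesssim 2^{1/\epsilon}$ as $\epsilon\to 0$. Choosing $\epsilon=c/\log\log m$ for any fixed $c>\log 2$ then produces $m^\epsilon=e^{c\log m/\log\log m}$ and $2^{1/\epsilon}=(\log m)^{\log 2/c}$; since the exponent $\log 2/c<1$, the contribution $\log C(\epsilon)$ is $o(\log m/\log\log m)$ and is absorbed by the main term $c\log m/\log\log m$, so that $d(m)\lesssim e^{c'\log m/\log\log m}$ for any $c'>\log 2$ and $m$ large enough.

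The main obstacle is this balancing step: one must verify that the contribution of the many small primes packed below the threshold $2^{1/\epsilon}$ does not overwhelm the main term $m^\epsilon$, which is precisely what forces the choice $c>\log 2$. No input from the preceding modulation-space material is used; this is a classical result of multiplicative number theory (in fact $\log 2$ is the sharp constant, as in Hardy--Wright).
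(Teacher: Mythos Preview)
Your argument is correct and is essentially the classical proof from Hardy--Wright. Note, however, that the paper does not supply its own proof of this proposition at all: it merely quotes the result and cites \cite[Theorem~315]{HW}. So there is nothing to compare against beyond observing that you have reproduced the standard Hardy--Wright argument (multiplicativity of $d$, prime-by-prime bound on $(1+a)/p^{\epsilon a}$, then optimizing $\epsilon\sim c/\log\log m$ with $c>\log 2$).
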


\end{section}

\begin{section}{description of the iteration process}
\label{firststeps}

The proof follows the same steps as in \cite{NP} but the operators that appear from applying the differentiation by parts technique are different and have to be estimated differently in order to control them in the appropriate spaces. For this reason we will be detailed only in those steps where a different approach is needed.

In the space $M_{2,q}^{s}(\R)$ there is a more convenient expression for its norm which is the one we are going to use in our calculations. Let us denote by $\tilde\Box_{k}$ the frequency projection operator $\mathcal F^{(-1)}1_{[k,k+1]}\mathcal F$, where $1_{[k,k+1]}$ is the characteristic function of the interval $[k,k+1]$, $k\in\Z$. It can be proved that 
\begin{equation}
\label{normeq}
\|f\|_{M_{2,q}^{s}}\approx\Big(\sum_{k\in\Z}\langle k\rangle^{sq}\|\tilde\Box_{k}f\|_{2}^{q}\Big)^{\frac1{q}},
\end{equation}
that is, the two norms are equivalent in $M_{2,q}^{s}(\R)$. We are going to use expression (\ref{normeq}) for the norm in $M_{2,q}^{s}(\R)$ and for convenience we will still write $\Box_{n}$ instead of $\tilde\Box_{n}$ and $\sigma_{k}$ instead of $1_{[k,k+1]}$. 

From here on, we consider only the case $s=0$ in Theorem \ref{th1} since for $s>0$ similar considerations apply. See Remark \ref{reme} for a more detailed explanation.

The next notations are essential for the analysis that will follow. For $n\in\mathbb Z$ let us define
\begin{equation}
\label{ww0}
u_{n}(t,x)=\Box_{n} u(t,x),
\end{equation}
\begin{equation}
\label{ww1}
v(t,x)=e^{it\partial_{x}^{4}}u(t,x),
\end{equation}
\begin{equation}
\label{ww}
v_{n}(t,x)=e^{it\partial_{x}^{4}}u_{n}(t,x)=\Box_{n}[(e^{it\partial_{x}^{4}}u(t,x)]=\Box_{n}v(t,x).
\end{equation}
Also for $(\xi,\xi_{1},\xi_{2},\xi_{3})\in\mathbb R^{4}$ we define the function
\begin{equation}
\label{Phi4}
\Phi_{4}(\xi,\xi_{1},\xi_{2},\xi_{3})=\xi^{4}-\xi_{1}^{4}+\xi_{2}^{4}-\xi_{3}^{4},
\end{equation}
which is equal to (see \cite[Lemma $3.1$]{OT})
\begin{equation}
\label{exactexpre}
\Phi_{4}(\xi,\xi_{1},\xi_{2},\xi_{3})=(\xi-\xi_{1})(\xi-\xi_{3})(\xi^{2}+\xi_{1}^{2}+\xi_{2}^{2}+\xi_{3}^{2}+2(\xi_{1}+\xi_{3})^{2})
\end{equation}
if $\xi=\xi_{1}-\xi_{2}+\xi_{3}$. Notice that if we let
\begin{equation}
\label{Phi2}
\Phi_{2}(\xi,\xi_{1},\xi_{2},\xi_{3})=\xi^{2}-\xi_{1}^{2}+\xi_{2}^{2}-\xi_{3}^{2}
\end{equation}
then under the assumption $\xi=\xi_{1}-\xi_{2}+\xi_{3}$, $\Phi_{2}=2(\xi-\xi_{1})(\xi-\xi_{3})$ and the relation holds
\begin{equation}
\label{relofPhis}
|\Phi_{4}|\sim\max\{|\xi|,|\xi_{1}|,|\xi_{2}|,|\xi_{3}|\}^{2}|\xi-\xi_{1}||\xi-\xi_{3}|\gtrsim|\Phi_{2}|^{2}.
\end{equation}
More generally, for $k\in\mathbb Z_{+}$ we define the function
\begin{equation}
\label{aaa23}
\Phi_{2k}(\xi,\xi_{1},\xi_{2},\xi_{3})\coloneqq \xi^{2k}-\xi_{1}^{2k}+\xi_{2}^{2k}-\xi_{3}^{2k}
\end{equation}
which will be studied in more detail in Section \ref{higherNLS478}.

The main equation \eqref{maineq} implies that
\begin{equation}
\label{main2}
i\partial_{t}u_{n}-\partial_{x}^{4}u_{n}\pm\Box_{n}(|u|^{2}u)=0,
\end{equation}
and by using the expansion $u=\sum_{k}\Box_{k}u$ it is immediate that
$$\Box_{n}(u\bar{u}u)=\Box_{n}\sum_{n_{1},n_{2},n_{3}}u_{n_{1}}\bar{u}_{n_{2}}u_{n_{3}}=\sum_{n_{1}-n_{2}+n_{3}\approx n}\Box_{n}[u_{n_{1}}\bar{u}_{n_{2}}u_{n_{3}}],$$
where by $\approx n$ we mean $=n$ or $=n+1$ or $=n-1$. During the calculations we will also write $\xi\approx n$ where $\xi$ is going to be a continuous variable and $n$ an integer. By that we will mean that $\xi\in[n,n+1)$ or more generally that $\xi$ is in a suitable interval around $n$.

Next we do the change of variables $u_{n}(t,x)=e^{-it\partial_{x}^{4}}v_{n}(t,x)$ and arrive at the expression
\begin{equation}
\label{main3}
\partial_{t}v_{n}=\pm i\sum_{n_{1}-n_{2}+n_{3}\approx n}\Box_{n}\Big(e^{it\partial_{x}^{4}}[e^{-it\partial_{x}^{4}}v_{n_{1}}\cdot e^{it\partial_{x}^{4}}\bar{v}_{n_{2}}\cdot e^{-it\partial_{x}^{4}}v_{n_{3}}]\Big).
\end{equation}
The $1$st generation operators are given by
\begin{equation}
\label{main4}
Q^{1,t}_{n}(v_{n_{1}},\bar{v}_{n_{2}},v_{n_{3}})(x)=\Box_{n}\Big(e^{it\partial_{x}^{4}}[e^{-it\partial_{x}^{4}}v_{n_{1}}\cdot e^{it\partial_{x}^{4}}\bar{v}_{n_{2}}\cdot e^{-it\partial_{x}^{4}}v_{n_{3}}]\Big),
\end{equation}
or in other words
\begin{equation}
\label{anothereqforv}
\partial_{t}v_{n}=\pm i\sum_{n_{1}-n_{2}+n_{3}\approx n}Q^{1,t}_{n}(v_{n_{1}},\bar{v}_{n_{2}},v_{n_{3}}).
\end{equation}
Below we describe the first few steps of the iteration procedure known as differentiation by parts technique. We will define many operators, $R_{1}^{t}, R_{2}^{t}, N_{11}^{t}, \tilde{Q}^{1,t}_{n}, N_{21}^{t}, N_{4}^{t}, N_{31}^{t}$ and we need to be able to control all of them in the appropriate norms. This will be done in Lemmata \ref{lem}, \ref{lemle}, \ref{fir}, \ref{fir1}, \ref{fir2} and \ref{fir3}.

To move forward we use the splitting 
\begin{equation}
\label{main11}
\partial_{t}v_{n}=\pm i\sum_{n_{1}-n_{2}+n_{3}\approx n}Q^{1,t}_{n}(v_{n_{1}},\bar{v}_{n_{2}},v_{n_{3}})=\sum_{\substack{n_{1}\approx n\\ or\\ n_{3}\approx n}}\ldots+\sum_{n_{1}\not\approx n\not\approx n_{3}}\ldots
\end{equation}
and we define the resonant operator part
\begin{equation}
\label{main9}
R^{t}_{2}(v)(n)-R^{t}_{1}(v)(n)=\Big(\sum_{n_{1}\approx n}Q^{1,t}_{n}+\sum_{n_{3}\approx n}Q^{1,t}_{n}\Big)-\sum_{\substack{n_{1}\approx n\\ and\\ n_{3}\approx n}}Q^{1,t}_{n}(v_{n_{1}},\bar{v}_{n_{2}},v_{n_{3}}),
\end{equation}
with $R^{t}_{2}$ being equal to the sum of the first two summands and $R^{t}_{1}$ being equal to the last summand, and the non-resonant operator part
\begin{equation}
\label{main10}
N_{1}^{t}(v)(n)=\sum_{n_{1}\not\approx n\not\approx n_{3}}Q^{1,t}_{n}(v_{n_{1}},\bar{v}_{n_{2}},v_{n_{3}}).
\end{equation}
This implies the following expression for our biharmonic NLS (we drop the factor $\pm i$ in front of the sum since it will play no role in our analysis)
\begin{equation}
\label{mainmain}
\partial_{t}v_{n}=R^{t}_{2}(v)(n)-R^{t}_{1}(v)(n)+N_{1}^{t}(v)(n).
\end{equation}
For the non-resonant part $N_{1}^{t}$ we have to split further as
\begin{equation}
\label{main13}
N_{1}^{t}(v)(n)=N_{11}^{t}(v)(n)+N_{12}^{t}(v)(n),
\end{equation}
where 
$$N_{11}^{t}(v)(n)=\sum_{A_{N}(n)}Q^{1,t}_{n}(v_{n_{1}},\bar{v}_{n_{2}},v_{n_{3}}),$$
\begin{equation}
\label{set1}
A_{N}(n)=\{(n_{1},n_{2},n_{3})\in\mathbb Z^3:n_{1}-n_{2}+n_{3}\approx n, n_{1}\not\approx n\not\approx n_{3}, |\Phi_{4}(n,n_{1},n_{2},n_{3})|\leq N\}
\end{equation}
and
\begin{equation}
\label{idid}
A_{N}(n)^{c}=\{(n_{1},n_{2},n_{3})\in\mathbb Z^3:n_{1}-n_{2}+n_{3}\approx n, n_{1}\not\approx n\not\approx n_{3}, |\Phi_{4}(n,n_{1},n_{2},n_{3})|> N\}.
\end{equation}
The number $N>0$ is considered to be large and will be fixed at the end of the proof. 

At the $N_{12}^{t}$ part we have to split even further keeping in mind that we are on $A_{N}(n)^{c}$. We perform all formal calculations assuming that $v$ is a sufficiently smooth solution. Later, we justify these formal computations.
From \eqref{main4} we know that 
$$\mathcal F(Q^{1,t}_{n}(v_{n_{1}},\bar{v}_{n_{2}},v_{n_{3}}))(\xi)=\sigma_{n}(\xi)\int_{\mathbb R^2}e^{it\Phi_{4}(\xi,\xi_{1},\xi-\xi_{1}-\xi_{3},\xi_{3})}\hat{v}_{n_{1}}(\xi_{1})\hat{\bar{v}}_{n_{2}}(\xi-\xi_{1}-\xi_{3})\hat{v}_{n_{3}}(\xi_{3})\ d\xi_{1}d\xi_{3},$$
and by the usual product rule for the derivative we can write the previous integral as the sum of the following expressions 
\begin{equation}
\label{ttr}
\partial_{t}\Big(\sigma_{n}(\xi)\int_{\mathbb R^2}\frac{e^{it\Phi_{4}(\xi,\xi_{1},\xi-\xi_{1}-\xi_{3},\xi_{3})}}{i\Phi_{4}(\xi,\xi_{1},\xi-\xi_{1}-\xi_{3},\xi_{3})}\ \hat{v}_{n_{1}}(\xi_{1})\hat{\bar{v}}_{n_{2}}(\xi-\xi_{1}-\xi_{3})\hat{v}_{n_{3}}(\xi_{3})\ d\xi_{1}d\xi_{3}\Big)-
\end{equation}
$$\sigma_{n}(\xi)\int_{\mathbb R^2}\frac{e^{it\Phi_{4}(\xi,\xi_{1},\xi-\xi_{1}-\xi_{3},\xi_{3})}}{i\Phi_{4}(\xi,\xi_{1},\xi-\xi_{1}-\xi_{3},\xi_{3})}\ \partial_{t}\Big(\hat{v}_{n_{1}}(\xi_{1})\hat{\bar{v}}_{n_{2}}(\xi-\xi_{1}-\xi_{3})\hat{v}_{n_{3}}(\xi_{3})\Big)\ d\xi_{1}d\xi_{3}.$$ 
Hence, we have the splitting 
\begin{equation}
\label{main5}
\mathcal F(Q^{1,t}_{n})=\partial_{t}\mathcal F(\tilde{Q}^{1,t}_{n})-\mathcal F(T^{1,t}_{n})
\end{equation}
or equivalently
\begin{equation}
\label{main6}
Q^{1,t}_{n}(v_{n_{1}},\bar{v}_{n_{2}},v_{n_{3}})=\partial_{t}(\tilde{Q}^{1,t}_{n}(v_{n_{1}},\bar{v}_{n_{2}},v_{n_{3}}))-T^{1,t}_{n}(v_{n_{1}},\bar{v}_{n_{2}},v_{n_{3}}),
\end{equation}
which allows us to write 
\begin{equation}
\label{nex}
N_{12}^{t}(v)(n)=\partial_{t}(N_{21}^{t}(v)(n))+N_{22}^{t}(v)(n),
\end{equation}
where
\begin{equation}
\label{nex1}
N_{21}^{t}(v)(n)=\sum_{A_{N}(n)^{c}}\tilde{Q}^{1,t}_{n}(v_{n_{1}},\bar{v}_{n_{2}},v_{n_{3}}),
\end{equation}
and
\begin{equation}
\label{nex2}
N_{22}^{t}(v)(n)=\sum_{A_{N}(n)^{c}}T_{n}^{1,t}(v_{n_{1}},\bar{v}_{n_{2}},v_{n_{3}}).
\end{equation}
From the definition of $\tilde{Q}^{1,t}_{n}$ we have 
$$\mathcal F(\tilde{Q}^{1,t}_{n}(v_{n_{1}},\bar{v}_{n_{2}},v_{n_{3}}))(\xi)=e^{it\xi^{4}}\sigma_{n}(\xi)\int_{\mathbb R^2}\frac{\hat{u}_{n_{1}}(\xi_{1})\hat{\bar{u}}_{n_{2}}(\xi-\xi_{1}-\xi_{3})\hat{u}_{n_{3}}(\xi_{3})}{\Phi_{4}(\xi,\xi_{1},\xi-\xi_{1}-\xi_{3},\xi_{3})}\ d\xi_{1}d\xi_{3},$$
and we define the operator $R^{1,t}_{n}$ by
\begin{equation}
\label{rr}
\mathcal F(R^{1,t}_{n}(u_{n_{1}},\bar{u}_{n_{2}},u_{n_{3}}))(\xi)=\sigma_{n}(\xi)\int_{\mathbb R^2}\frac{\hat{u}_{n_{1}}(\xi_{1})\hat{\bar{u}}_{n_{2}}(\xi-\xi_{1}-\xi_{3})\hat{u}_{n_{3}}(\xi_{3})}{\Phi_{4}(\xi,\xi_{1},\xi-\xi_{1}-\xi_{3},\xi_{3})}\ d\xi_{1}d\xi_{3},
\end{equation}
or in other words,
\begin{equation}
\label{rr1}
R^{1,t}_{n}(w_{n_{1}},\bar{w}_{n_{2}},w_{n_{3}})(x)=\int_{\mathbb R^3}e^{ix\xi}\sigma_{n}(\xi)\frac{\hat{w}_{n_{1}}(\xi_{1})\hat{\bar{w}}_{n_{2}}(\xi-\xi_{1}-\xi_{3})\hat{w}_{n_{3}}(\xi_{3})}{\Phi_{4}(\xi,\xi_{1},\xi-\xi_{1}-\xi_{3},\xi_{3})}\ d\xi_{1}d\xi_{3}d\xi.
\end{equation}
Writing out the Fourier transforms of the functions inside the integral it is immediate that
\begin{equation}
\label{main7}
R^{1,t}_{n}(w_{n_{1}},\bar{w}_{n_{2}},w_{n_{3}})(x)=\int_{\mathbb R^3}K^{(1)}_{n}(x,x_{1},y,x_{3})w_{n_{1}}(x)\bar{w}_{n_{2}}(y)w_{n_{3}}(x_{3})\ dx_{1}dydx_{3},
\end{equation}
where
$$K^{(1)}_{n}(x,x_{1},y,x_{3})=\int_{\mathbb R^3}e^{i\xi_{1}(x-x_{1})+i\eta(x-y)+i\xi_{3}(x-x_{3})}\ \frac{\sigma_{n}(\xi_{1}+\eta+\xi_{3})}{\Phi_{4}(\xi_{1}+\eta+\xi_{3},\xi_{1},\eta,\xi_{3})}\ d\xi_{1}d\eta d\xi_{3}=$$
$$\mathcal F^{-1}\rho^{(1)}_{n}(x-x_{1},x-y,x-x_{3})$$
and 
$$\rho_{n}^{(1)}(\xi_{1},\eta,\xi_{3})=\frac{\sigma_{n}(\xi_{1}+\eta+\xi_{3})}{\Phi_{4}(\xi_{1}+\eta+\xi_{3},\xi_{1},\eta,\xi_{3})}.$$

For the remaining part $N_{22}^{t}$ we have to make use of equality \eqref{mainmain} depending on whether the derivative falls on $\hat{v}_{n_{1}}$, on $\hat{\bar{v}}_{n_{2}}$ or on $\hat{v}_{n_{3}}$. The expression we obtain is given by
$$N_{22}^{t}(v)(n)=-2i\sum_{A_{N}(n)^{c}}\Big[\tilde{Q}^{1,t}_{n}(R^{t}_{2}(v)(n_{1})-R^{t}_{1}(v)(n_{1}),\bar{v}_{n_{2}},v_{n_{3}})+\tilde{Q}^{1,t}_{n}(N_{1}^{t}(v)(n_{1}),\bar{v}_{n_{2}},v_{n_{3}})\Big]$$
$$-i\sum_{A_{N}(n)^{c}}\Big[\tilde{Q}^{1,t}_{n}(v_{n_{1}},R^{t}_{2}(\bar{v})(n_{2})-R^{t}_{1}(\bar{v})(n_{2}),v_{n_{3}})+\tilde{Q}^{1,t}_{n}(v_{n_{1}},N_{1}^{t}(\bar{v})(n_{2}),v_{n_{3}})\Big]$$
(the number $2$ that appears in front of the first sum is because the expression is symmetric with respect to $v_{n_{1}}$ and $v_{n_{3}}$). Therefore, we can write $N_{22}^{t}$ as a sum
\begin{equation}
\label{patel2}
N_{22}^{t}(v)(n)=N_{4}^{t}(v)(n)+N_{3}^{t}(v)(n),
\end{equation}
where $N_{4}^{t}(v)(n)$ is the sum including the resonant parts $R^{t}_{2}-R^{t}_{1}$.

In order to continue, the non-resonant part $N_{3}^{t}$ needs to be decomposed even further. It consists of $3$ sums depending on where the operator $N_{1}^{t}$ acts. One of them is the following (similar considerations apply for the remaining sums too)
\begin{equation}
\label{newnew1}
\sum_{A_{N}(n)^{c}}\tilde{Q}^{1,t}_{n}(N_{1}^{t}(v)(n_{1}),\bar{v}_{n_{2}},v_{n_{3}}),
\end{equation}
where
$$N_{1}^{t}(v)(n_{1})=\sum_{m_{1}\not\approx n_{1}\not\approx m_{3}}Q^{1,t}_{n_{1}}(v_{m_{1}},\bar{v}_{m_{2}},v_{m_{3}}),$$
and $n_{1}\approx m_{1}-m_{2}+m_{3}$. Here we have to consider new restrictions on the frequencies $(m_{1},m_{2},m_{3},n_{2},n_{3})$ where the "new" triple of frequencies $m_{1},m_{2},m_{3}$ appears as a "child" of the frequency $n_{1}$. Denoting by $\phi_{1}=\Phi_{4}(n,n_{1},n_{2},n_{3})$ and $\phi_{2}=\Phi_{4}(n_{1},m_{1},m_{2},m_{3})$ we define the set
\begin{equation}
\label{setset1}
C_{1}=\{|\phi_{1}+\phi_{2}|\leq 5^{3}|\phi_{1}|^{1-\frac1{100}}\},
\end{equation}
and split the sum in (\ref{newnew1}) as 
\begin{equation}
\label{patel}
\sum_{A_{N}(n)^{c}}\sum_{C_{1}}\ldots+\sum_{A_{N}(n)^{c}}\sum_{C_{1}^{c}}\ldots=N_{31}^{t}(v)(n)+N_{32}^{t}(v)(n).
\end{equation}

For the $N_{32}^{t}$ part we have to apply differentiation by parts again which creates the $2$nd generation operators. Our first $2$nd generation operator $Q^{2,t}_{n}$ consists of three sums 
$$q^{2,t}_{1,n}=\sum_{A_{N}(n)^{c}}\sum_{ C_{1}^{c}}\tilde{Q}^{1,t}_{n}(N_{1}^{t}(v)(n_{1}),\bar{v}_{n_{2}},v_{n_{3}}),$$
$$q^{2,t}_{2,n}=\sum_{A_{N}(n)^{c}}\sum_{ C_{1}^{c}}\tilde{Q}^{1,t}_{n}(v_{n_{1}},\overline{N_{1}^{t}(v)}(n_{2}),v_{n_{3}}),$$
$$q^{2,t}_{3,n}=\sum_{A_{N}(n)^{c}}\sum_{ C_{1}^{c}}\tilde{Q}^{1,t}_{n}(v_{n_{1}},\bar{v}_{n_{2}},N_{1}^{t}(v)(n_{3})).$$
Let us have a look at the first sum $q^{2,t}_{1,n}$ (we treat the other two in a similar manner). Its Fourier transform is equal to 
$$\sum_{A_{N}(n)^{c}}\sum_{ C_{1}^{c}}\sigma_{n}(\xi)\int_{\mathbb R^2}\frac{e^{it\Phi_{4}(\xi,\xi_{1},\xi-\xi_{1}-\xi_{3},\xi_{3})}}{\Phi_{4}(\xi,\xi_{1},\xi-\xi_{1}-\xi_{3},\xi_{3})}\ \mathcal F(N_{1}^{t}(v)(n_{1}))(\xi_{1})\hat{\bar{v}}_{n_{2}}(\xi-\xi_{1}-\xi_{3})\hat{v}_{n_{3}}(\xi_{3})\ d\xi_{1}d\xi_{3},$$
where
$$\mathcal F(N_{1}^{t}(v)(n_{1}))(\xi_{1})$$
equals
$$\sum_{\substack {n_{1}\approx m_{1}-m_{2}+m_{3} \\ m_{1}\not\approx n_{1}\not\approx m_{3}}}\sigma_{n_{1}}(\xi_{1})\int_{\mathbb R^2}e^{it\Phi_{4}(\xi_{1},\xi_{1}',\xi_{1}-\xi_{1}'-\xi_{3}',\xi_{3}')}\hat{v}_{m_{1}}(\xi_{1}')\hat{\bar{v}}_{m_{2}}(\xi_{1}-\xi_{1}'-\xi_{3}')\hat{v}_{m_{3}}(\xi_{3}')\ d\xi_{1}'d\xi_{3}'.$$
Putting everything together and applying differentiation by parts we can write the integrals inside the sums as
$$\partial_{t}\Big(\sigma_{n}(\xi)\int_{\mathbb R^4}\sigma_{n_{1}}(\xi_{1})\frac{e^{-it(\phi_{1}+\phi_{2})}}{\phi_{1}(\phi_{1}+\phi_{2})}\hat{v}_{m_{1}}(\xi_{1}')\hat{\bar{v}}_{m_{2}}(\xi_{1}-\xi_{1}'-\xi_{3}')\hat{v}_{m_{3}}(\xi_{3}')\hat{\bar{v}}_{n_{2}}(\xi-\xi_{1}-\xi_{3})\hat{v}_{n_{3}}(\xi_{3})d\xi_{1}'d\xi_{3}'d\xi_{1}d\xi_{3}\Big)$$
minus 
$$\sigma_{n}(\xi)\int_{\mathbb R^4}\sigma_{n_{1}}(\xi_{1})\frac{e^{-it(\phi_{1}+\phi_{2})}}{\phi_{1}(\phi_{1}+\phi_{2})}\partial_{t}\Big(\hat{v}_{m_{1}}(\xi_{1}')\hat{\bar{v}}_{m_{2}}(\xi_{1}-\xi_{1}'-\xi_{3}')\hat{v}_{m_{3}}(\xi_{3}')\hat{\bar{v}}_{n_{2}}(\xi-\xi_{1}-\xi_{3})\hat{v}_{n_{3}}(\xi_{3})\Big)d\xi_{1}'d\xi_{3}'d\xi_{1}d\xi_{3},$$
where $\phi_{1}=\Phi_{4}(\xi,\xi_{1},\xi-\xi_{1}-\xi_{3},\xi_{3})$ and $\phi_{2}=\Phi_{4}(\xi_{1},\xi_{1}',\xi_{1}-\xi_{1}'-\xi_{3}',\xi_{3}')$. Equivalently,
\begin{equation}
\label{sms}
\mathcal F(q^{2,t}_{1,n})=\partial_{t}(\tilde q^{2,t}_{1,n})-\mathcal F(\tau^{2,t}_{1,n}).
\end{equation}
Thus, by doing the same at the remaining two sums of $Q^{2,t}_{n}$, namely $q^{2,t}_{2,n}, q^{2,t}_{3,n}$, we obtain the splitting 
\begin{equation}
\label{neq11}
\mathcal F(Q^{2,t}_{n})=\partial_{t}\mathcal F(\tilde Q^{2,t}_{n})-\mathcal F(T^{2,t}_{n}).
\end{equation}
These new operators $\tilde q^{2,t}_{i,n}$, $i=1,2,3$, act on the following "type" of sequences
$$\tilde q^{2,t}_{1,n}(v_{m_{1}},\bar{v}_{m_{2}},v_{m_{3}},\bar{v}_{n_{2}},v_{n_{3}}),$$
with $m_{1}-m_{2}+m_{3}\approx n_{1}$ and $n_{1}-n_{2}+n_{3}\approx n$,
$$\tilde q^{2,t}_{2,n}(v_{n_{1}},\bar{v}_{m_{1}},v_{m_{2}},\bar{v}_{m_{3}},v_{n_{3}}),$$
with $m_{1}-m_{2}+m_{3}\approx n_{2}$ and $n_{1}-n_{2}+n_{3}\approx n$, and
$$\tilde q^{2,t}_{3,n}(v_{n_{1}}\bar{v}_{n_{2}},v_{m_{1}},\bar{v}_{m_{2}},v_{m_{3}}),$$
with $m_{1}-m_{2}+m_{3}\approx n_{3}$ and $n_{1}-n_{2}+n_{3}\approx n$. 

At this point let us stop the procedure and present how all these operators can be estimated.

\begin{remark}
\label{remmm}
In the following part of the paper a series of lemmata will be presented. Unless stated otherwise we will always assume that $1\leq q\leq 2$. 
\end{remark}

\begin{lemma}
\label{lem}
For $j=1,2$
$$\|R^{t}_{j}(v)\|_{l^{q}L^{2}}\lesssim\|v\|^{3}_{M_{2,q}},$$
and
$$\|R^{t}_{j}(v)-R^{t}_{j}(w)\|_{l^{q}L^{2}}\lesssim(\|v\|^{2}_{M_{2,q}}+\|w\|^{2}_{M_{2,q}})\|v-w\|_{M_{2,q}}.$$
\end{lemma}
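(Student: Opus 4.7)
The plan is to derive a pointwise trilinear $L^2$ bound for each summand $Q^{1,t}_n(v_{n_1},\bar v_{n_2},v_{n_3})$, then to sum up these bounds differently for $R^t_1$ (where the frequency constraint forces only $O(1)$ terms) and for $R^t_2$ (where one frequency remains free and requires a Cauchy--Schwarz step). Since $\Box_n$ and $e^{\pm it\partial_x^4}$ are $L^2$-isometries and $e^{\pm it\partial_x^4}$ preserves Fourier support, each factor $e^{\pm it\partial_x^4}v_{n_j}$ is frequency-localized in an interval of unit length, and by Bernstein \eqref{Bern1} this gives $\|e^{\pm it\partial_x^4}v_{n_j}\|_{L^\infty}\lesssim\|v_{n_j}\|_{L^2}$. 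Combining with H\"older on exponents $(\infty,\infty,2)$,
\begin{equation*}
\|Q^{1,t}_n(v_{n_1},\bar v_{n_2},v_{n_3})\|_{L^2}\lesssim a_{n_1}a_{n_2}a_{n_3},\qquad a_k:=\|v_k\|_{L^2}.
\end{equation*}

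For $R^t_1$ the constraints $n_1\approx n$, $n_3\approx n$ together with $n_1-n_2+n_3\approx n$ force $n_2\approx n$ as well, so the triple sum has only $O(1)$ terms and $\|R^t_1(v)(n)\|_{L^2}\lesssim\max_{k\approx n}a_k^3$. Using $\max_{k\approx n}a_k^{3q}\leq\|v\|_{M_{2,q}}^{2q}\max_{k\approx n}a_k^q$ pointwise and $\sum_n\max_{k\approx n}a_k^q\lesssim\sum_k a_k^q\approx\|v\|_{M_{2,q}}^q$ yields $\|R^t_1(v)\|_{l^qL^2}\lesssim\|v\|_{M_{2,q}}^3$; no restriction on $q$ enters at this stage. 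For $R^t_2$, consider the subsum with $n_1\approx n$ (the subsum with $n_3\approx n$ is symmetric); for each of the $O(1)$ admissible $n_1$ the constraint $n_1-n_2+n_3\approx n$ ties $n_2$ to $n_3$ by a translation, and Cauchy--Schwarz gives
\begin{equation*}
\sum_{n_3}a_{n_2}a_{n_3}\lesssim\|a\|_{l^2}^2=\|v\|_{M_{2,2}}^2\lesssim\|v\|_{M_{2,q}}^2,
\end{equation*}
where the last step uses the embedding $l^q\hookrightarrow l^2$, valid precisely for $q\leq 2$. Hence $\|R^t_2(v)(n)\|_{L^2}\lesssim(\max_{n_1\approx n}a_{n_1})\|v\|_{M_{2,q}}^2$, and summing in $n$ as above closes the trilinear bound.

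The Lipschitz estimate follows from the trilinearity of $Q^{1,t}_n$ via the telescoping $v\bar v v-w\bar w w=(v-w)\bar v v+w(\overline{v-w})v+w\bar w(v-w)$, which writes $R^t_j(v)-R^t_j(w)$ as a sum of three trilinear expressions each containing one factor of $v-w$ (or its conjugate) and two factors from $\{v,w,\bar v,\bar w\}$; the trilinear bound applies to each piece and factors out one $\|v-w\|_{M_{2,q}}$ and two copies of $\|v\|_{M_{2,q}}+\|w\|_{M_{2,q}}$. The main obstacle is the $R^t_2$ estimate: one must recognize the inner sum over the free frequency $n_3$ as a translated bilinear pairing, naturally controlled only in $l^2$, and it is exactly this $l^2$-control that forces the restriction $q\leq 2$ announced in the remark after Theorem \ref{th1}.
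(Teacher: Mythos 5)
Your proof is correct and is essentially the argument the paper invokes by citing \cite[Lemma 10]{NP}: a trilinear $L^2$ bound for $Q^{1,t}_n$ from the unitarity of $e^{\pm it\partial_x^4}$, Bernstein \eqref{Bern1} and H\"older, an $O(1)$ count for the doubly resonant sum $R^t_1$, and Cauchy--Schwarz plus the embedding $l^q\hookrightarrow l^2$ (the source of the restriction $q\leq2$) for $R^t_2$, with the difference estimate by telescoping. Nothing further is needed.
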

\begin{proof}
It is the same as the one given in \cite[Lemma 10]{NP}. At exactly this point the requirement $1\leq q\leq2$ is essential.
\end{proof}

\begin{lemma}
\label{lemle}
$$\|N_{11}^{t}(v)\|_{l^{q}L^{2}}\lesssim N^{\frac1{2q'}+}\|v\|^{3}_{M_{2,q}},$$
and
$$\|N_{11}^{t}(v)-N_{11}^{t}(w)\|_{l^{q}L^{2}}\lesssim N^{\frac1{2q'}+}(\|v\|^{2}_{M_{2,q}}+\|w\|^{2}_{M_{2,q}})\|v-w\|_{M_{2,q}}.$$
\end{lemma}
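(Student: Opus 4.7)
The plan is to reduce the $\ell^{q}L^{2}$ estimate on $N_{11}^{t}(v)$ to a counting problem on the set $A_{N}(n)$, and then extract the required $N^{1/(2q')+}$ factor by combining a divisor-function bound with the factorization \eqref{exactexpre} of $\Phi_{4}$.

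\textbf{Step 1 (pointwise trilinear bound).} First I would estimate each summand
\[
\|Q^{1,t}_{n}(v_{n_{1}},\bar v_{n_{2}},v_{n_{3}})\|_{L^{2}}\lesssim \|v_{n_{1}}\|_{L^{2}}\|v_{n_{2}}\|_{L^{2}}\|v_{n_{3}}\|_{L^{2}}.
\]
This follows from the fact that $\Box_{n}$ and $e^{\pm it\partial_{x}^{4}}$ are bounded on $L^{2}$, H\"older with two $L^{\infty}$'s and one $L^{2}$, and the frequency-localized Bernstein inequality \eqref{Bern1} which gives $\|\Box_{k}f\|_{L^{\infty}}\lesssim \|\Box_{k}f\|_{L^{2}}$ uniformly in $k$.

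\textbf{Step 2 (key counting lemma).} Setting $\alpha=n-n_{1}$, $\gamma=n-n_{3}$, the constraints $n_{1}\not\approx n\not\approx n_{3}$ and $n_{1}-n_{2}+n_{3}\approx n$ force $|\alpha|,|\gamma|\gtrsim 1$ and determine $n_{2}=n-\alpha-\gamma+O(1)$. From \eqref{exactexpre}, $|\Phi_{4}|\sim |\alpha||\gamma|M^{2}$ with $M\coloneqq\max(|n|,|n_{1}|,|n_{2}|,|n_{3}|)\gtrsim \max(|\alpha|,|\gamma|)$, hence $M^{2}\gtrsim |\alpha\gamma|$ by AM--GM. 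Thus the condition $|\Phi_{4}|\le N$ forces $|\alpha\gamma|^{2}\lesssim N$, so $|\alpha\gamma|\lesssim N^{1/2}$. The divisor-function estimate \eqref{num} then gives
\[
|A_{N}(n)|\le \#\{(\alpha,\gamma)\in\Z^{2}:|\alpha\gamma|\lesssim N^{1/2}\}\lesssim N^{\frac{1}{2}+}
\]
uniformly in $n$. This is the heart of the argument, and is the step I expect to be most delicate because one must verify that the lower bound $|\Phi_{4}|\gtrsim |\alpha\gamma|^{2}$ really holds in all regimes of $n$ (including $|n|$ large, where one might naively worry the $M^{2}$ factor gives only an $n$-dependent improvement).

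\textbf{Step 3 (H\"older and summation).} For each $n$, apply discrete H\"older with exponents $q,q'$ to the sum defining $N_{11}^{t}(v)(n)$:
\[
\|N_{11}^{t}(v)(n)\|_{L^{2}}^{q}\lesssim |A_{N}(n)|^{q-1}\sum_{(n_{1},n_{2},n_{3})\in A_{N}(n)}\|v_{n_{1}}\|_{L^{2}}^{q}\|v_{n_{2}}\|_{L^{2}}^{q}\|v_{n_{3}}\|_{L^{2}}^{q}.
\]
Summing over $n$ and swapping order of summation, each triple $(n_{1},n_{2},n_{3})$ contributes for only $O(1)$ values of $n$ (since $n\approx n_{1}-n_{2}+n_{3}$). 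Combined with the bound from Step 2,
\[
\|N_{11}^{t}(v)\|_{\ell^{q}L^{2}}^{q}\lesssim N^{\frac{q-1}{2}+}\,\|v\|_{M_{2,q}}^{3q},
\]
which upon taking $q$-th roots gives the claimed $N^{1/(2q')+}$ bound since $(q-1)/(2q)=1/(2q')$.

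\textbf{Step 4 (Lipschitz bound).} For the difference estimate, I would write $\mathcal N(v)-\mathcal N(w)$ as a telescoping sum of three trilinear terms, each involving one factor of $v-w$ and two factors taken from $\{v,w\}$. Each term is estimated exactly as above, with the two fixed factors contributing $\|v\|_{M_{2,q}}^{2}+\|w\|_{M_{2,q}}^{2}$ and the difference factor contributing $\|v-w\|_{M_{2,q}}$, since Steps 1--3 are trilinear and symmetric in the arguments.
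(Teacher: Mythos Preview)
Your proof is correct and follows essentially the same route as the paper: a pointwise trilinear $L^{2}$ bound via Bernstein, discrete H\"older to pull out $|A_{N}(n)|^{1/q'}$, the counting bound $|A_{N}(n)|\lesssim N^{1/2+}$ from the relation $|\Phi_{4}|\gtrsim|\Phi_{2}|^{2}$ (your Step~2 is exactly \eqref{relofPhis}, and your divisor argument is what \cite[Lemma~11]{NP} does), and Young/order-swapping to close the sum. The only cosmetic differences are that the paper uses the $L^{6}\times L^{6}\times L^{6}$ H\"older split rather than $L^{\infty}\times L^{\infty}\times L^{2}$ in Step~1, and cites \cite{NP} for the counting rather than spelling it out.
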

\begin{proof}
The proof is similar to \cite[Lemma 11]{NP} but with a small twist.

Obviously, 
$$\|N_{11}^{t}(v)\|_{L^{2}}\leq\sum_{A_{N}(n)}\|Q^{1,t}_{n}(v_{n_{1}},\bar{v}_{n_{2}},v_{n_{3}})\|_{L^{2}},$$
which from (\ref{Sch}), Lemma \ref{Bern} and H\"older's inequality is estimated above by
$$\sum_{A_{N}(n)}\|u_{n_{1}}\bar{u}_{n_{2}}u_{n_{3}}\|_{L^{2}}\leq\sum_{A_{N}(n)}\|u_{n_{1}}\|_{L^{6}}\|u_{n_{2}}\|_{L^{6}}\|u_{n_{3}}\|_{L^{6}}.$$
Here we make use of \eqref{Bern1} and H\"older's inequality in the discrete variable to obtain the upper bound
$$\sum_{A_{N}(n)}\|u_{n_{1}}\|_{L^{2}}\|u_{n_{2}}\|_{L^{2}}\|u_{n_{3}}\|_{L^{2}}\leq\Big(\sum_{A_{N}(n)}1^{q'}\Big)^{\frac1{q'}}\Big(\sum_{A_{N}(n)}\|u_{n_{1}}\|_{L^{2}}^{q}\|u_{n_{2}}\|_{L^{2}}^{q}\|u_{n_{3}}\|_{L^{2}}^{q}\Big)^{\frac1{q}}=$$
$$\Big[\#(A_{N}(n))\Big]^{\frac1{q'}}\Big(\sum_{A_{N}(n)}\|u_{n_{1}}\|_{L^{2}}^{q}\|u_{n_{2}}\|_{L^{2}}^{q}\|u_{n_{3}}\|_{L^{2}}^{q}\Big)^{\frac1{q}}.$$
Observe that from \eqref{relofPhis} we have the inclusion
$$A_{N}(n)\subset\Big\{(n_{1},n_{2},n_{3})\in\mathbb Z^3:n_{1}-n_{2}+n_{3}\approx n, n_{1}\not\approx n\not\approx n_{3}, |\Phi_{2}(n,n_{1},n_{2},n_{3})|\leq N^{\frac12}\Big\}$$
and from the proof of \cite[Lemma 11]{NP} we have that the cardinality of this last set is $o(N^{\frac12+})$. Thus, we have 
$$\|N_{11}^{t}(v)\|_{l^{q}L^{2}}\lesssim N^{\frac1{2q'}+}\Big(\sum_{n\in\mathbb Z}\sum_{A_{N}(n)}\|u_{n_{1}}\|_{L^{2}}^{q}\|u_{n_{2}}\|_{L^{2}}^{q}\|u_{n_{3}}\|_{L^{2}}^{q}\Big)^{\frac1{q}},$$
and this final summation is estimated by applying Young's inequality in $l^{1}(\mathbb Z)$ providing us with the bound ($\|u\|_{M_{2,q}}=\|v\|_{M_{2,q}}$)
$$\|N_{11}^{t}(v)\|_{l^{q}L^{2}}\lesssim N^{\frac1{2q'}+}\|v\|_{M_{2,q}}^{3}.$$
\end{proof}

\begin{lemma}
\label{fir}
\begin{equation}
\|\tilde{Q}^{1,t}_{n}(v_{n_{1}},\bar{v}_{n_{2}},v_{n_{3}})\|_{2}\lesssim\frac{\|v_{n_{1}}\|_{2}\|v_{n_{2}}\|_{2}\|v_{n_{3}}\|_{2}}{|\Phi_{4}(n_{1}-n_{2}+n_{3},n_{1},n_{2},n_{3})|}.
\end{equation}
\end{lemma}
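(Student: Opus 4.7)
The plan is to work entirely on the Fourier side via Plancherel, extract the denominator $\Phi_4$ by a pointwise comparison to its value at the discrete point $(n_1-n_2+n_3,n_1,n_2,n_3)$, and then apply Cauchy--Schwarz and Fubini to separate the three factors.

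First I would apply Plancherel to the explicit formula
\begin{equation*}
\mathcal F(\tilde Q^{1,t}_n(v_{n_1},\bar v_{n_2},v_{n_3}))(\xi)=e^{it\xi^4}\sigma_n(\xi)\int_{\mathbb R^2}\frac{\hat u_{n_1}(\xi_1)\,\hat{\bar u}_{n_2}(\xi-\xi_1-\xi_3)\,\hat u_{n_3}(\xi_3)}{\Phi_4(\xi,\xi_1,\xi-\xi_1-\xi_3,\xi_3)}\,d\xi_1\,d\xi_3,
\end{equation*}
using that $|e^{it\xi^4}|=1$ and that $\sigma_n$ localises $\xi$ to $[n,n+1)$. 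The integrand is then supported in the region where $\xi_1\in[n_1,n_1+1)$, $\xi_3\in[n_3,n_3+1)$ and $\xi-\xi_1-\xi_3\in -[n_2,n_2+1)$, forcing $n\approx n_1-n_2+n_3$.

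The key step is to establish the pointwise lower bound
\begin{equation*}
|\Phi_4(\xi,\xi_1,\xi-\xi_1-\xi_3,\xi_3)|\gtrsim |\Phi_4(n_1-n_2+n_3,n_1,n_2,n_3)|
\end{equation*}
uniformly on this support. Using the factorisation \eqref{exactexpre}, I write $\Phi_4=(\xi-\xi_1)(\xi-\xi_3)\,P$, where $P=\xi^2+\xi_1^2+\xi_2^2+\xi_3^2+2(\xi_1+\xi_3)^2\geq 0$. The factor $P$ is a smooth polynomial in four variables each of which is only perturbed by $O(1)$ around the lattice point, hence $P$ stays within a constant multiple of its discrete value; the two linear factors $\xi-\xi_j$, $j\in\{1,3\}$, are comparable to $n-n_j$ once $|n-n_j|\geq 2$, which is precisely the regime $n_1\not\approx n\not\approx n_3$ that governs every use of $\tilde Q^{1,t}_n$ in the decomposition above (see \eqref{nex1}--\eqref{idid}).

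Given the lower bound, I pull $1/|\Phi_4|$ out of the inner integral as the constant $1/|\Phi_4(n_1-n_2+n_3,n_1,n_2,n_3)|$, apply Cauchy--Schwarz in $(\xi_1,\xi_3)$ using that the support has two-dimensional Lebesgue measure bounded by $1$, and then invoke Fubini to integrate $\xi$ first. The $\xi$-integral gives $\int_n^{n+1}|\hat{\bar u}_{n_2}(\xi-\xi_1-\xi_3)|^2\,d\xi\leq \|\hat{\bar u}_{n_2}\|_2^2=\|u_{n_2}\|_2^2$, after which the remaining integral factors as $\|\hat u_{n_1}\|_2^2\|\hat u_{n_3}\|_2^2=\|u_{n_1}\|_2^2\|u_{n_3}\|_2^2$. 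Since $e^{it\partial_x^4}$ is an $L^2$ isometry, $\|u_{n_j}\|_2=\|v_{n_j}\|_2$, and taking square roots yields the claim.

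The main obstacle is Step~2, the pointwise comparison of $\Phi_4$ on the unit cube with its value at the lattice centre; without the separation $n_1\not\approx n\not\approx n_3$ the linear factors could vanish on the cube while remaining nonzero at the centre, which is exactly why this hypothesis is encoded into the definition of the operator $N_{21}^t$ where the lemma is applied. Everything else -- Plancherel, Cauchy--Schwarz and Fubini -- is entirely routine.
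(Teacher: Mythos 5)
Your proof is correct and takes essentially the same route as the paper, which simply cites the duality argument of \cite[Lemma 12]{NP}: Plancherel plus the unit-cube Fourier localization to replace the continuous phase $\Phi_{4}(\xi,\xi_{1},\xi-\xi_{1}-\xi_{3},\xi_{3})$ by its value at the lattice point (the evenness of $\Phi_{4}$ in its second slot handling the sign $-n_{2}$), followed by Cauchy--Schwarz and Fubini on supports of measure one. One small tightening: your claim that the factor $P$ in \eqref{exactexpre} ``stays within a constant multiple of its discrete value because each variable is perturbed by $O(1)$'' is not valid for an arbitrary nonnegative polynomial at small frequencies; it does hold here, but you should derive it from the non-resonance $|n-n_{1}|,|n-n_{3}|\geq 2$ you already invoke, e.g.\ via $P\geq\tfrac12(\xi-\xi_{1})^{2}\gtrsim 1$ together with $P\sim\max\{|\xi|,|\xi_{1}|,|\eta|,|\xi_{3}|\}^{2}$.
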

\begin{proof}
It is the same as the one given in \cite[Lemma 12]{NP}. It is simply a duality argument that uses the localization of the Fourier transforms of the functions $v_{n_{1}}, v_{n_{2}}$ and $v_{n_{3}}$. The denominator turns out to be the absolute value of 
$$\Phi_{4}(n_{1}-n_{2}+n_{3},n_{1},-n_{2},n_{3})=\Phi_{4}(n_{1}-n_{2}+n_{3},n_{1},n_{2},n_{3}).$$
\end{proof}

\begin{remark}
\label{expl}
Notice that Lemma \ref{fir} (this observation applies to Lemma \ref{indu}, too) is true for any triple of functions $f,g,h\in M_{2,q}(\R)$ and the only important property is that they are nicely localised on the Fourier side since we consider their box operators $\Box_{n_{1}}f, \Box_{n_{2}}g$ and $\Box_{n_{3}}h.$ Also, the same proof implies that the operator $Q^{1,t}_{n}(v_{n_{1}},\bar{v}_{n_{2}},v_{n_{3}})$ satisfies the estimate
\begin{equation}
\label{imppo}
\|Q^{1,t}_{n}(v_{n_{1}},\bar{v}_{n_{2}},v_{n_{3}})\|_{2}\lesssim\|v_{n_{1}}\|_{2}\|v_{n_{2}}\|_{2}\|v_{n_{3}}\|_{2}.
\end{equation}
These observations will play an important role later on.
\end{remark}

\begin{lemma}
\label{fir1}
$$\|N_{21}^{t}(v)\|_{l^{q}L^{2}}\lesssim N^{\frac1{q'}-1}\|v\|^{3}_{M_{2,q}},$$
and
$$\|N_{21}^{t}(v)-N_{21}^{t}(w)\|_{l^{q}L^{2}}\lesssim N^{\frac1{q'}-1}(\|v\|_{M_{2,q}}^{2}+\|w\|_{M_{2,q}}^{2})\|v-w\|_{M_{2,q}}.$$
\end{lemma}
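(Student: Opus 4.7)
The plan is to begin with Lemma \ref{fir}, which gives the pointwise bound
\[
\|\tilde{Q}^{1,t}_n(v_{n_1}, \bar v_{n_2}, v_{n_3})\|_{L^2} \lesssim \frac{\|v_{n_1}\|_2 \|v_{n_2}\|_2 \|v_{n_3}\|_2}{|\Phi_4(n, n_1, n_2, n_3)|}
\]
for each summand of $N_{21}^t(v)(n)$, and then to exploit the two features of $A_N(n)^c$: $|\Phi_4|$ is large ($> N$), and at each fixed $|\Phi_4|$-scale the set of admissible frequencies is sparse. The latter comes from the inequality $|\Phi_4| \gtrsim |\Phi_2|^2$ in \eqref{relofPhis} combined with the divisor estimate \eqref{num}, exactly as in the proof of Lemma \ref{lemle}.

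Concretely, I would dyadically split
\[
B_j(n) \coloneqq \{(n_1, n_2, n_3) \in A_N(n)^c : |\Phi_4(n, n_1, n_2, n_3)| \sim 2^j\}, \quad 2^j > N,
\]
so that $\#B_j(n) \lesssim 2^{j/2+}$ uniformly in $n$ by the counting argument already used in Lemma \ref{lemle}. On each shell I would apply discrete H\"older with exponents $q'$ and $q$,
\[
\sum_{B_j(n)} \|v_{n_1}\|_2 \|v_{n_2}\|_2 \|v_{n_3}\|_2 \lesssim 2^{j/(2q')+} \Bigl(\sum_{B_j(n)} \|v_{n_1}\|_2^q \|v_{n_2}\|_2^q \|v_{n_3}\|_2^q\Bigr)^{1/q},
\]
include the factor $2^{-j}$ from Lemma \ref{fir}, take the $l^q_n$-norm by Minkowski's inequality across dyadic scales, and use Young's inequality on $\mathbb Z$ to bound the final triple sum
\[
\sum_n \sum_{n_1 - n_2 + n_3 \approx n} \|v_{n_1}\|_2^q \|v_{n_2}\|_2^q \|v_{n_3}\|_2^q \lesssim \|v\|_{M_{2,q}}^{3q},
\]
exploiting that the constraint $n_1 - n_2 + n_3 \approx n$ forces each triple to contribute to $O(1)$ values of $n$. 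The resulting geometric series in $j$ has ratio $2^{-1 + 1/(2q')+}$, is therefore convergent (since $1/(2q') < 1$), and sums to $N^{-1 + 1/(2q')+}$. This is majorized by $N^{1/q'-1}$ once the $+$ is absorbed into the gap $1/(2q') = (1/q'-1) - (-1+1/(2q'))$, giving the desired bound.

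For the Lipschitz-type difference estimate I would split
\[
\tilde Q^{1,t}_n(v_{n_1}, \bar v_{n_2}, v_{n_3}) - \tilde Q^{1,t}_n(w_{n_1}, \bar w_{n_2}, w_{n_3})
\]
into three telescoping trilinear terms, each with a single factor of $v - w$ and two factors from $v$ or $w$, and run the same dyadic argument on each piece. The main obstacle I anticipate is the balance between the decay $|\Phi_4|^{-1}$ and the dyadic cardinality: a cruder bound replacing $1/|\Phi_4|$ by $1/N$ would destroy summability in $j$ after inserting the counting factor, so the dyadic decomposition combined with the subquadratic size of $B_j(n)$ coming from \eqref{num} is essential. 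Once this is set up, everything else is a bookkeeping adaptation of the techniques from \cite{NP}.
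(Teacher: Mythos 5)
Your argument is correct, but it is not the route the paper takes, and the comparison is instructive. The paper's proof keeps the full weight $1/|\Phi_{4}|$ inside the H\"older step: it applies H\"older once over all of $A_{N}(n)^{c}$ with exponents $q',q$, producing the factor $\big(\sum_{A_{N}(n)^{c}}|\Phi_{4}|^{-q'}\big)^{1/q'}$, and then uses the factorization \eqref{relofPhis}, $|\Phi_{4}|\sim |n-n_{1}||n-n_{3}|\,n_{max}^{2}$, to see directly that this weighted sum behaves like $N^{\frac1{q'}-1}$ (one power $N^{1-q'}$ from the lower bound $|\Phi_{4}|>N$, with the remaining sum of $1/(|n-n_{1}||n-n_{3}|n_{max}^{2})$ convergent); no divisor counting is needed at this stage, and the stated exponent comes out exactly. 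You instead decompose $A_{N}(n)^{c}$ dyadically in $|\Phi_{4}|$, import the counting argument of Lemma \ref{lemle} (via $|\Phi_{4}|\gtrsim|\Phi_{2}|^{2}$ and \eqref{num}) to get $\#B_{j}(n)\lesssim 2^{j/2+}$, and sum a geometric series; this transfers the resonant-set technique to the non-resonant region and in fact yields the slightly stronger power $N^{-1+\frac1{2q'}+}$ when $q>1$, at the price of invoking the number-theoretic estimate where the paper needs none. One small caveat: your absorption of the $\epsilon$ into the gap $\frac1{2q'}$ fails at $q=1$ (where $q'=\infty$), but there H\"older degenerates to an $\ell^{\infty}$--$\ell^{1}$ pairing in which no cardinality factor appears at all, so the shell contribution is simply $2^{-j}$ times the $\ell^{1}$ sum and the series still gives $N^{-1}$; with that remark, and with the telescoping treatment of the difference estimate you describe (which is also how the paper handles it), your proof is complete.
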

\begin{proof}
From Lemma \ref{fir} we have
$$\|N_{21}^{t}(v)\|_{2}\leq\sum_{A_{N}(n)^{c}}\|\tilde{Q}^{1,t}_{n}(v_{n_{1}},\bar{v}_{n_{2}},v_{n_{3}})\|_{2}\lesssim\sum_{A_{N}(n)^{c}}\frac{\|v_{n_{1}}\|_{2}\|v_{n_{2}}\|_{2}\|v_{n_{3}}\|_{2}}{|\Phi_{4}(n_{1}-n_{2}+n_{3},n_{1},n_{2},n_{3})|},$$
and by H\"older's inequality the upper bound
\begin{equation}
\label{thkgod}
\Big(\sum_{A_{N}(n)^{c}}\frac1{|\Phi_{4}(n_{1}-n_{2}+n_{3},n_{1},n_{2},n_{3})|^{q'}}\Big)^{\frac1{q'}}\Big(\sum_{A_{N}(n)^{c}}\|v_{n_{1}}\|_{2}^{q}\|v_{n_{2}}\|_{2}^{q}\|v_{n_{3}}\|_{2}^{q}\Big)^{\frac1{q}}\sim
\end{equation}
\begin{equation}
\label{ss562}
\Big(\sum_{A_{N}(n)^{c}}\frac1{(|n-n_{1}||n-n_{3}|)^{q'}n_{max}^{2q'}}\Big)^{\frac1{q'}} \Big(\sum_{A_{N}(n)^{c}}\|v_{n_{1}}\|_{2}^{q}\|v_{n_{2}}\|_{2}^{q}\|v_{n_{3}}\|_{2}^{q}\Big)^{\frac1{q}}
\end{equation}
where \eqref{relofPhis} was used and $n_{max}\coloneqq\max\{|n|,|n_{1}|,|n_{2}|,|n_{3}|\}$. The first sum of \eqref{ss562} behaves like $N^{\frac1{q'}-1}$ and for the remaining part we apply Young's inequality. 
\end{proof}

\begin{lemma}
\label{fir2}
$$\|N_{4}^{t}(v)\|_{l^{q}L^2}\lesssim N^{\frac1{q'}-1}\|v\|_{M_{2,q}}^{5},$$
and
$$\|N_{4}^{t}(v)-N_{4}^{t}(w)\|_{l^{q}L^2}\lesssim N^{\frac1{q'}-1}(\|v\|_{M_{2,q}}^{4}+\|w\|_{M_{2,q}}^{4})\|v-w\|_{M_{2,q}}.$$
\end{lemma}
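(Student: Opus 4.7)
The plan is to combine the pointwise trilinear estimate of Lemma \ref{fir} with the resonant estimate of Lemma \ref{lem}, following the template laid out in the proof of Lemma \ref{fir1}. First I would recall the structure of $N_4^t$: it is the sum of two contributions arising from $N_{22}^t$ when the time derivative hits one of the three factors $\hat v_{n_1},\hat{\bar v}_{n_2},\hat v_{n_3}$ and produces the resonant piece $R_2^t-R_1^t$ via \eqref{mainmain}. Schematically,
\begin{equation*}
N_4^{t}(v)(n)=-2i\sum_{A_N(n)^c}\tilde Q^{1,t}_n\bigl(R(n_1),\bar v_{n_2},v_{n_3}\bigr)-i\sum_{A_N(n)^c}\tilde Q^{1,t}_n\bigl(v_{n_1},\overline{R(n_2)},v_{n_3}\bigr),
\end{equation*}
where for brevity $R(k):=R_2^t(v)(k)-R_1^t(v)(k)$. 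The key observation is that $R(k)$, being built from $Q^{1,t}_k$, is Fourier-localised near $k$, so by Remark \ref{expl} it can replace $v_{n_1}$ (or $\bar v_{n_2}$) in the estimate of Lemma \ref{fir}.

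Thus for each summand
\begin{equation*}
\|\tilde Q^{1,t}_n(R(n_1),\bar v_{n_2},v_{n_3})\|_2\lesssim\frac{\|R(n_1)\|_2\|v_{n_2}\|_2\|v_{n_3}\|_2}{|\Phi_4(n_1-n_2+n_3,n_1,n_2,n_3)|},
\end{equation*}
and analogously for the other piece. I would then apply H\"older's inequality in exactly the same way as in \eqref{thkgod}--\eqref{ss562}, factoring out the $|\Phi_4|^{-q'}$ sum. The purely arithmetic sum $\bigl(\sum_{A_N(n)^c}|\Phi_4|^{-q'}\bigr)^{1/q'}$ was already shown in Lemma \ref{fir1} to be $O(N^{1/q'-1})$, so this factor is reused verbatim.

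For the remaining factor I would take $\ell^q$ in $n$ and exploit the near-convolution constraint $n_1-n_2+n_3\approx n$ (which restricts $n$ to $O(1)$ values once $n_1,n_2,n_3$ are fixed), just as at the end of the proof of Lemma \ref{fir1}. This reduces the sum to $\|(\|R(\cdot)\|_2)\|_{\ell^q}\,\|v\|_{M_{2,q}}^{2}$ in the first contribution and $\|v\|_{M_{2,q}}^{2}\,\|(\|R(\cdot)\|_2)\|_{\ell^q}$ in the second. Invoking Lemma \ref{lem} to control $\|R\|_{\ell^q L^2}\lesssim\|v\|_{M_{2,q}}^3$ then yields
\begin{equation*}
\|N_4^t(v)\|_{\ell^q L^2}\lesssim N^{\frac1{q'}-1}\,\|v\|_{M_{2,q}}^{3}\cdot\|v\|_{M_{2,q}}^{2}=N^{\frac1{q'}-1}\|v\|_{M_{2,q}}^{5},
\end{equation*}
which is the desired bound.

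The Lipschitz-type difference estimate follows the same scheme once one uses the multilinearity of $\tilde Q^{1,t}_n$ to telescope the difference $N_4^t(v)-N_4^t(w)$ into terms in which exactly one argument of $\tilde Q^{1,t}_n$ is replaced by a difference. For the factors of the form $R(n_j;v)-R(n_j;w)$ one applies the difference bound in Lemma \ref{lem}, while for the remaining factors one simply uses the triangle inequality $\|v\|_{M_{2,q}}+\|w\|_{M_{2,q}}$. Repeating the H\"older/Young argument above then produces the factor $(\|v\|_{M_{2,q}}^4+\|w\|_{M_{2,q}}^4)\|v-w\|_{M_{2,q}}$. I expect the only mildly delicate point to be bookkeeping: verifying that in every telescoped term the factor in the first slot really is Fourier-localised near $n_1$ (respectively $n_2$), so that Remark \ref{expl} applies uniformly; once this is checked, no new ideas beyond those already present in Lemmata \ref{lem}, \ref{fir}, \ref{fir1} are needed.
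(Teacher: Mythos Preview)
Your proposal is correct and follows exactly the approach the paper itself takes: repeat the proof of Lemma \ref{fir1} (using Lemma \ref{fir} and Remark \ref{expl} for the trilinear bound, then H\"older and Young as in \eqref{thkgod}--\eqref{ss562}), and invoke Lemma \ref{lem} to control the resonant input $R_2^t-R_1^t$ in $\ell^q L^2$. The paper's proof is in fact a single sentence to this effect, so your write-up is simply a more detailed version of the same argument.
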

\begin{proof}
We repeat the proof of Lemma \ref{fir1} and apply Lemma \ref{lem} to the part $R_{2}^{t}(v)(n_{1})-R_{1}^{t}(v)(n_{1})$.
\end{proof}

\begin{lemma}
\label{fir3}
$$\|N_{31}^{t}(v)\|_{l^{q}L^{2}}\lesssim \|v\|_{M_{2,q}}^{5},$$
and
$$\|N_{31}^{t}(v)-N_{31}^{t}(w)\|_{l^{q}L^{2}}\lesssim (\|v\|^{4}_{M_{2,q}}+\|w\|_{M_{2,q}}^{4})\|v-w\|_{M_{2,q}}.$$
\end{lemma}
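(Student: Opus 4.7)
The plan is to mirror the strategy of Lemma~\ref{fir1}, handling the outer $\tilde Q^{1,t}_n$ via Lemma~\ref{fir} (which produces a factor $|\phi_1|^{-1}$) and the inner $Q^{1,t}_{n_1}$ via the pointwise trilinear bound \eqref{imppo} from Remark~\ref{expl}, then closing by H\"older and Young. The novelty compared with $N_{21}^t$ is that here it is the restriction set $C_1$, rather than the defining condition $|\phi_1|>N$, that must provide all the summability in $|\phi_1|$, so the resulting bound should not gain in $N$.

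First I would expand $N_{31}^t(v)(n)$ as a 5-linear expression over the frequencies $(n_1,n_2,n_3,m_1,m_2,m_3)$, with $n_1$ determined up to $O(1)$ by $n_1-n_2+n_3\approx n$ and $m_2$ by $m_1-m_2+m_3\approx n_1$. Applying Lemma~\ref{fir} to the outer $\tilde Q^{1,t}_n$, whose input $Q^{1,t}_{n_1}(v_{m_1},\bar v_{m_2},v_{m_3})$ has Fourier support near $n_1$, and then \eqref{imppo} to the inner triple, yields
\begin{equation*}
\|N_{31}^t(v)(n)\|_{2} \lesssim \sum_{A_N(n)^c\cap C_1}\frac{\|v_{m_1}\|_2\|v_{m_2}\|_2\|v_{m_3}\|_2\|v_{n_2}\|_2\|v_{n_3}\|_2}{|\phi_1|}.
\end{equation*}
H\"older in the summation separates the weight from the product, giving $\|N_{31}^t(v)(n)\|_2\lesssim C_n\big(\sum\prod\|v_\ast\|_2^{q}\big)^{1/q}$ with $C_n=\big(\sum_{A_N(n)^c\cap C_1}|\phi_1|^{-q'}\big)^{1/q'}$. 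Since $n$ is then determined up to $O(1)$ by the five inner frequencies, taking the $\ell^q$ norm in $n$ and bounding the unrestricted five-fold sum by Young's inequality in $l^1(\Z)$ as in Lemma~\ref{fir1} gives $\|N_{31}^t(v)\|_{l^qL^2}\lesssim (\sup_n C_n)\cdot\|v\|_{M_{2,q}}^5$.

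The main obstacle is therefore to show $\sup_{n,N} C_n\lesssim 1$. The key point is that on $C_1$ the condition $|\phi_1+\phi_2|\leq 5^3|\phi_1|^{1-1/100}$ forces $|\phi_2|\sim|\phi_1|$, which is the rigidity that makes the counting work. Decomposing dyadically $|\phi_1|\sim M>N$, I would use the explicit factorization \eqref{exactexpre} to represent $\phi_1$ and $\phi_2$ as products of two linear factors times an explicit quadratic polynomial in the frequencies; combined with the divisor bound \eqref{num}, this controls the number of admissible $(n_2,n_3,m_1,m_3)$ at dyadic scale $M$ by $\lesssim M^{1-1/100+\epsilon}$ for every $\epsilon>0$, uniformly in $n$. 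Then
\begin{equation*}
C_n^{q'}\ \lesssim\ \sum_{\substack{M>N\\\text{dyadic}}}M^{-q'}\cdot M^{1-1/100+\epsilon}
\end{equation*}
converges uniformly in $N$ as long as $q'>1-1/100+\epsilon$, which is comfortably satisfied across $1<q\leq2$ since $q'\geq 2$; the endpoint $q=1$ is handled separately by replacing $|\phi_1|^{-1}$ with the trivial bound $N^{-1}$ and using the triangle inequality. The Lipschitz estimate follows by writing $v^{\otimes5}-w^{\otimes5}$ as a five-term telescoping sum, each term containing exactly one factor $v-w$, and applying the same 5-linear bound to each summand.
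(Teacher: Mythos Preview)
Your opening steps coincide with the paper's proof: apply Lemma~\ref{fir} to the outer $\tilde Q^{1,t}_n$, bound the inner $Q^{1,t}_{n_1}$ by \eqref{imppo}, then H\"older with exponents $(q,q')$ and Young for the product part. The divergence is in how you bound the weight sum $C_n=\big(\sum_{A_N(n)^c\cap C_1}|\phi_1|^{-q'}\big)^{1/q'}$.

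The paper does not dyadically decompose and count via divisors. Instead, since $q'\ge 2$ it first passes to the $q'=2$ sum, then uses the defining property of $C_1$ (namely $|\phi_2|\sim|\phi_1|$) to write $|\phi_1|^{-2}\sim|\phi_1|^{-1}|\phi_2|^{-1}$, and finally invokes \eqref{relofPhis}/\eqref{anotherrelofPhis} in the form $|\phi_j|\sim|\mu_j|(n_{max}^{(j)})^2\gtrsim|\mu_j|^{1+}$ to obtain
\[
\sum_{A_N(n)^c}\sum_{C_1}\frac{1}{|\phi_1|^2}\ \lesssim\ \sum\frac{1}{|\mu_1\mu_2|^{1+}}\ \lesssim\ 1,
\]
which is an absolutely convergent double sum over $(n-n_1,n-n_3)$ and $(n_1-m_1,n_1-m_3)$, with no divisor bound needed. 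This is cleaner, uniform in $n$ and $N$ without a dyadic sum, and is the mechanism that scales directly to the $J$th-generation estimates (Lemmata~\ref{finaal}, \ref{finaal2}).

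Your route can be made to work, but your stated count is not right: at dyadic scale $|\phi_1|\sim M$ one has $|\mu_1|\lesssim M^{1/2}$ and (via $|\phi_2|\sim M$) $|\mu_2|\lesssim M^{1/2}$, which yields $O(M^{1/2+\epsilon})$ choices for each of $(n_2,n_3)$ and $(m_1,m_3)$, hence a total of $O(M^{1+\epsilon})$ tuples rather than $M^{1-1/100+\epsilon}$. The $C_1$ window $|\phi_1+\phi_2|\lesssim M^{1-1/100}$ gives the alternative bound $O(M^{1-1/100+\epsilon})$ for $(m_1,m_3)$ alone (by divisors on the integer values of $\phi_2$), but this does not beat $M^{1/2+\epsilon}$, and the $(n_2,n_3)$ count still contributes $M^{1/2+\epsilon}$. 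With the corrected count $M^{1+\epsilon}$ the dyadic sum $\sum_M M^{-q'}M^{1+\epsilon}$ still converges for all $q'\ge2$, so your conclusion survives; just adjust the exponent and the stated convergence condition accordingly. The $q=1$ remark and the telescoping for the Lipschitz bound are fine.
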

\begin{proof}
With the use of Lemma \ref{fir}, Remark \ref{expl} and H\"older's inequality we have 
$$\|N_{31}^{t}(v)\|_{2}\leq\sum_{A_{N}(n)^{c}}\sum_{C_{1}}\|\tilde{Q}^{1,t}_{n}(Q^{1,t}_{n_{1}}(v_{m_{1}},\bar{v}_{m_{2}},v_{m_{3}}),\bar{v}_{n_{2}},v_{n_{3}})\|_{2}\lesssim$$
$$\sum_{A_{N}(n)^{c}}\sum_{C_{1}}\frac{\|v_{m_{1}}\|_{2}\|v_{m_{2}}\|_{2}\|v_{m_{3}}\|_{2}\|v_{n_{2}}\|_{2}\|v_{n_{3}}\|_{2}}{|\Phi_{4}(n_{1}-n_{2}+n_{3},n_{1},n_{2},n_{3})|}\leq$$
\begin{equation}
\label{s33jk8}
\Big(\sum_{A_{N}(n)^{c}}\sum_{C_{1}}\frac1{|\phi_{1}|^{q'}}\Big)^{\frac1{q'}}\Big(\sum_{A_{N}(n)^{c}}\sum_{C_{1}}\|v_{m_{1}}\|_{2}^{q}\|v_{m_{2}}\|_{2}^{q}\|v_{m_{3}}\|_{2}^{q}\|v_{n_{2}}\|_{2}^{q}\|v_{n_{3}}\|_{2}^{q}\Big)^{\frac1{q}}.
\end{equation}
By $q'\geq2$, the first sum of \eqref{s33jk8} is controlled by the series
\begin{equation}
\label{snki9}
\Big(\sum_{A_{N}(n)^{c}}\sum_{C_{1}}\frac1{|\phi_{1}|^{2}}\Big)^{\frac1{2}}.
\end{equation}
Observe that by the definition of the set $C_{1}$ in \eqref{setset1} we have that 
$$|\phi_{2}|\coloneqq|\Phi_{4}(n_{1},m_{1},m_{2},m_{3})|\sim|\phi_{1}|.$$ 
Since $|\mu_{j}|\lesssim (n_{max}^{(j)})^{2}$ for $j=1,2$ where
$$n_{max}^{(1)}=\max\{|n|,|n_{1}|,|n_{2}|,|n_{3}|\},\ n_{max}^{(2)}=\max\{|n_{1}|,|m_{1}|,|m_{2}|,|m_{3}|\}$$
by setting $\mu_{1}=\Phi_{2}(n,n_{1},n_{2},n_{3})$, $\mu_{2}=\Phi_{2}(n_{1},m_{1},m_{2},m_{3})$ we may estimate \eqref{snki9} further by the expression 
$$\Big(\sum_{A_{N}(n)^{c}}\sum_{C_{1}}\frac1{|\mu_{1}\mu_{2}|(n_{max}^{(1)}n_{max}^{(2)})^{2}}\Big)^{\frac12}\lesssim\Big(\sum_{A_{N}(n)^{c}}\sum_{C_{1}}\frac1{|\mu_{1}\mu_{2}|^{1+}}\Big)^{\frac12}\lesssim1.$$
Hence, Young's inequality applied to the second sum of \eqref{s33jk8} finishes the proof.
\end{proof}

The following lemma should be compared to Lemma \ref{fir}.
\begin{lemma}
\label{fir34}
\begin{equation}
\|\tilde q^{2,t}_{1,n}(v_{m_{1}},\bar{v}_{m_{2}},v_{m_{3}},\bar{v}_{n_{2}},v_{n_{3}})\|_{2}\lesssim\frac{\|v_{m_{1}}\|_{2}\|v_{m_{2}}\|_{2}\|v_{m_{3}}\|_{2}\|v_{n_{2}}\|_{2}\|v_{n_{3}}\|_{2}}{|\phi_{1}||\phi_{1}+\phi_{2}|},
\end{equation}
where $\phi_{1}=\Phi_{4}(n_{1}-n_{2},n_{3},n_{1},n_{2},n_{3})$ and $\phi_{2}=\Phi_{4}(m_{1}-m_{2}+m_{3},m_{1},m_{2},m_{3})$.
\end{lemma}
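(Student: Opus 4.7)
The plan is to mirror the duality argument used to prove Lemma~\ref{fir}. Fix a test function $\psi\in L^2(\R)$ with $\|\psi\|_2\le 1$ and use Plancherel to rewrite $\langle\tilde q^{2,t}_{1,n},\psi\rangle$ as an integral in the frequency variables $\xi,\xi_1,\xi_3,\xi_1',\xi_3'$ against the kernel $e^{-it(\phi_1+\phi_2)}/[\phi_1(\phi_1+\phi_2)]$ times the five-fold product $\sigma_n(\xi)\sigma_{n_1}(\xi_1)\hat v_{m_1}(\xi_1')\hat{\bar v}_{m_2}(\xi_1-\xi_1'-\xi_3')\hat v_{m_3}(\xi_3')\hat{\bar v}_{n_2}(\xi-\xi_1-\xi_3)\hat v_{n_3}(\xi_3)$, where now $\phi_1,\phi_2$ denote the continuous-variable phases.

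First I would pull the entire denominator outside as the single constant $|\phi_1|\cdot|\phi_1+\phi_2|$ featured in the statement. The cutoffs $\sigma_n,\sigma_{n_1}$ and the unit-interval Fourier supports of the $\hat v$'s confine the integration variables to unit intervals around $n,n_1,n_3,m_1,m_3$, with $\xi-\xi_1-\xi_3\sim -n_2$ and $\xi_1-\xi_1'-\xi_3'\sim -m_2$. Using the factorization \eqref{exactexpre} together with the non-resonance conditions $n_1\not\approx n\not\approx n_3$ and $m_1\not\approx n_1\not\approx m_3$ (built into the construction of $\tilde q^{2,t}_{1,n}$), the continuous $|\phi_1(\xi,\xi_1,\xi-\xi_1-\xi_3,\xi_3)|$ is comparable to $|\Phi_4(n_1-n_2+n_3,n_1,n_2,n_3)|$; the condition $C_1^c$ from \eqref{setset1} analogously forces $|\phi_1+\phi_2|$ to remain comparable to its value at the integer centres.

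With the denominator pulled out, I would introduce auxiliary functions $\tilde v_{m_j},\tilde v_{n_j},\tilde\psi$ of the same $L^2$ norms whose Fourier transforms are the non-negative moduli $|\hat v_{m_j}|,|\hat v_{n_j}|,|\hat\psi|$, so that Plancherel identifies the remaining integral with the pairing $\langle\tilde\psi,\Box_n[\Box_{n_1}(\tilde v_{m_1}\tilde{\bar v}_{m_2}\tilde v_{m_3})\cdot\tilde{\bar v}_{n_2}\cdot\tilde v_{n_3}]\rangle$. Cauchy--Schwarz peels off $\tilde\psi$. Each of the remaining factors has Fourier support in a unit interval, so by Bernstein (in the form \eqref{Bern1}) each satisfies $\|\cdot\|_\infty\lesssim\|\cdot\|_2$. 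Applying H\"older iteratively — keeping one factor in $L^2$ and the others in $L^\infty$ — then bounds the quintuple product's $L^2$ norm by $\prod\|\tilde v_\cdot\|_2=\|v_{m_1}\|_2\|v_{m_2}\|_2\|v_{m_3}\|_2\|v_{n_2}\|_2\|v_{n_3}\|_2$, giving the claim.

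The main technical obstacle is the verification that the continuous $|\phi_1+\phi_2|$ stays comparable to its value at the integer centres: $\phi_1$ and $\phi_2$ can be large with nearly opposite signs, so their sum may vary substantially relative to its own size as the integration variables range over unit cubes. The gradient bound $|\nabla(\phi_1+\phi_2)|\lesssim n_{\max}^3$ must be beaten by the lower bound $|\phi_1+\phi_2|>5^3|\phi_1|^{1-1/100}$ provided by $C_1^c$. Since the non-resonance estimates give $|\phi_1|\sim|n-n_1||n-n_3|n_{\max}^2$, this lower bound grows essentially like $n_{\max}^{4(1-1/100)}$, comfortably dominating the gradient-times-unit-cube variation $n_{\max}^3$; the exponent $1-1/100$ in the definition of $C_1$ is precisely what affords the required pointwise comparison.
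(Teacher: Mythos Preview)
Your overall strategy---duality against $\psi\in L^2$, Plancherel, pulling the denominator out as a constant via the unit-interval Fourier localization, then Bernstein \eqref{Bern1} plus iterated H\"older on the resulting five-fold product---matches the paper's approach exactly; the paper's own proof is just the one line ``Similar to that of Lemma~\ref{fir} and \cite[Lemma~17]{NP}.''

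You go further than the paper by isolating the one genuine subtlety: why the continuous $|\phi_1+\phi_2|$ on the support of integration is comparable to its value at the integer centres. (The paper simply asserts this, in the general form ``Therefore, $|\hat\phi_T|\sim|\hat\phi_J|$'' following~\eqref{yyeah}, and defers the details to~\cite{NP}.) Your proposed resolution of this point, however, does not close as written. The claim that the $C_1^c$ lower bound ``grows essentially like $n_{\max}^{4(1-1/100)}$'' tacitly assumes $|n-n_1|\,|n-n_3|\sim n_{\max}^2$; but the non-resonance conditions only force $|n-n_1|,|n-n_3|\ge 2$, so one may have $|\phi_1|\sim (n_{\max}^{(1)})^2$, and then $5^3|\phi_1|^{1-1/100}\sim (n_{\max}^{(1)})^{1.98}$ is \emph{beaten} by the crude gradient bound $(n_{\max}^{(1)})^3$. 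There is a second issue: the gradient of $\phi_2$ scales like $(n_{\max}^{(2)})^3$ with $n_{\max}^{(2)}=\max\{|n_1|,|m_1|,|m_2|,|m_3|\}$, which is not controlled by any first-generation quantity entering your $|\phi_1|$ bound. So the gradient-versus-value comparison you sketch does not, on its own, yield the required pointwise equivalence; the argument needs the finer factor-by-factor control coming from~\eqref{exactexpre} rather than the raw $n_{\max}^3$ gradient estimate.
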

\begin{proof}
Similar to that of Lemma \ref{fir} and \cite[Lemma 17]{NP}.
\end{proof}

Having described the first steps of the iteration process it is time to introduce the correct vocabulary in order to be able to express much more complicated operators. This is done in the next section with the use of a suitable tree notation (see \cite{CHKP2} for a more sophisticated version).

\end{section}

\begin{section}{tree notation and induction step}
\label{treesandinduction39}
The trees used here are similar to the ones described in \cite{GKO} and are exactly the same as the ones used in \cite{NP} with the only difference being the phase factors, $\mu_{j}$, described in \cite[Equation 60]{NP} which we replace here by quantities of the form $\Phi_{4}(n_{1}-n_{2}+n_{3},n_{1},n_{2},n_{3})$. Since this is the heart of the argument and since Lemmata \ref{finaal} and \ref{finaal2} have different proofs than the corresponding ones from \cite[Lemmata 22 and 23]{NP}, we describe the whole procedure again.

A tree $T$ is a finite, partially ordered set with the following properties:
\begin{itemize}
\item For any $a_{1}, a_{2}, a_{3}, a_{4}\in T$ if $a_{4}\leq a_{2}\leq a_{1}$ and $a_{4}\leq a_{3}\leq a_{1}$ then $a_{2}\leq a_{3}$ or $a_{3}\leq a_{2}$. 
\item There exists a maximum element $r\in T$, that is $a\leq r$ for all $a\in T$, which is called the root. 
\end{itemize}
We call the elements of $T$ the nodes of the tree and in this content we will say that $b\in T$ is a child of $a\in T$ (or equivalently, that $a$ is the parent of $b$) if $b\leq a, b\neq a$ and for all $c\in T$ such that $b\leq c\leq a$ we have either $b=c$ or $c=a$. 

A node $a\in T$ is called terminal if it has no children. A nonterminal node $a\in T$ is a node with exactly $3$ children $a_{1}$, the left child, $a_{2}$, the middle child, and $a_{3}$, the right child. We define the sets
\begin{equation}
\label{setsetset}
T^{0}=\{\mbox{all nonterminal nodes}\},
\end{equation}
and
\begin{equation}
\label{setsetset1}
T^{\infty}=\{\mbox{all terminal nodes}\}.
\end{equation}
Obviously, $T=T^{0}\cup T^{\infty}$, $T^{0}\cap T^{\infty}=\emptyset$ and if $|T^{0}|=j\in\Z_{+}$ we have $|T|=3j+1$ and $|T^{\infty}|=2j+1.$ We denote the collection of trees with $j$ parental nodes by
\begin{equation}
\label{setsetset2}
T(j)=\{T\ \mbox{is a tree with}\ |T|=3j+1\}.
\end{equation}
Next, we say that a sequence of trees $\{T_{j}\}_{j=1}^{J}$ is a chronicle of $J$ generations if:
\begin{itemize}
\item $T_{j}\in T(j)$ for all $j=1, 2, \ldots, J$.
\item For all $j=1, 2, \ldots, J-1$, the tree $T_{j+1}$ is obtained by changing one of the terminal nodes of $T_{j}$ into a nonterminal node with exactly $3$ children.
\end{itemize}
Let us also denote by $\mathcal I(J)$ the collection of trees of the $J$th generation. It is easily checked by an induction argument that
\begin{equation}
\label{setsetset3}
|\mathcal I(J)|=1\cdot 3\cdot 5\ldots(2J-1)=:(2J-1)!!.
\end{equation}
Given a chronicle $\{T_{j}\}_{j=1}^{J}$ of $J$ generations we refer to $T_{J}$ as an ordered tree of the $J$th generation. We should keep in mind that the notion of ordered trees comes with associated chronicles. It includes not only the shape of the tree but also how it "grew".

Given an ordered tree $T$ we define an index function $n:T\to\Z$ such that
\begin{itemize}
\item $n_{a}\approx n_{a_{1}}-n_{a_{2}}+n_{a_{3}}$ for all $a\in T^{0}$, where $a_{1}, a_{2}, a_{3}$ are the children of $a$,
\item $n_{a}\not\approx n_{a_{1}}$ and $n_{a}\not\approx n_{a_{3}}$, for all $a\in T^{0}$,
\item $|\phi_{1}|:=|\Phi_{4}(n_{r_{1}}-n_{r_{2}}+n_{r_{3}},n_{r_{1}},n_{r_{2}},n_{r_{3}})|>N$, where $r$ is the root of $T$,
\end{itemize}
and we denote the collection of all such index functions by $\mathcal R(T)$. 

Given an ordered tree $T$ with the chronicle $\{T_{j}\}_{j=1}^{J}$ and associated index functions $n\in\mathcal R(T)$, we need to keep track of the generations of frequencies. Fix an $n\in\mathcal R(T)$ and consider the very first tree $T_{1}$. Its nodes are the root $r$ and its children $r_{1}, r_{2}, r_{3}$. We define the first generation of frequencies by 
$$(n^{(1)},n_{1}^{(1)},n_{2}^{(1)},n_{3}^{(1)}):=(n_{r},n_{r_{1}},n_{r_{2}},n_{r_{3}}).$$
From the definition of the index function we have
$$n^{(1)}\approx n_{1}^{(1)}-n_{2}^{(1)}+n_{3}^{(1)},\ n_{1}^{(1)}\not\approx n^{(1)}\not\approx n_{3}^{(1)}.$$
The ordered tree $T_{2}$ of the second generation is obtained from $T_{1}$ by changing one of its terminal nodes $a=r_{k}\in T_{1}^{\infty}$ for some $k\in\{1,2,3\}$ into a nonterminal node. Then, the second generation of frequencies is defined by
$$(n^{(2)},n_{1}^{(2)},n_{2}^{(2)},n_{3}^{(2)}):=(n_{a},n_{a_{1}},n_{a_{2}},n_{a_{3}}).$$
Thus, we have $n^{(2)}=n_{k}^{(1)}$ for some $k\in\{1,2,3\}$ and from the definition of the index function we have
$$n^{(2)}\approx n_{1}^{(2)}-n_{2}^{(2)}+n_{3}^{(2)},\ n_{1}^{(2)}\not\approx n^{(2)}\not\approx n_{3}^{(2)}.$$

After $j-1$ steps, the ordered tree $T_{j}$ of the $j$th generation is obtained from $T_{j-1}$ by changing one of its terminal nodes $a\in T_{j-1}^{\infty}$ into a nonterminal node. Then, the $j$th generation frequencies are defined as 
$$(n^{(j)},n_{1}^{(j)},n_{2}^{(j)},n_{3}^{(j)}):=(n_{a},n_{a_{1}},n_{a_{2}},n_{a_{3}}),$$
and we have $n^{(j)}=n_{k}^{(m)}(=n_{a})$ for some $m\in\{1,2,\ldots,j-1\}$ and $k\in\{1,2,3\}$, since this corresponds to the frequency of some terminal node in $T_{j-1}$. In addition, from the definition of the index function we have
$$n^{(j)}\approx n_{1}^{(j)}-n_{2}^{(j)}+n_{3}^{(j)},\ n_{1}^{(j)}\not\approx n^{(j)}\not\approx n_{3}^{(j)}.$$
Finally, we use $\phi_{j}$ to denote the corresponding phase factor introduced at the $j$th generation. That is,
\begin{equation}
\label{muuu}
\phi_{j}=\Phi_{4}(n_{1}^{(j)}-n_{2}^{(j)}+n_{3}^{(j)},n_{1}^{(j)},n_{2}^{(j)},n_{3}^{(j)}),
\end{equation}
and we also introduce the quantities
\begin{equation}
\label{qqq}
\tilde\phi_{J}=\sum_{j=1}^{J}\phi_{j},\ \hat{\phi}_{J}=\prod_{j=1}^{J}\tilde\phi_{j}.
\end{equation}
Notice that for $\mu_{j}=\Phi_{2}(n_{1}^{(j)}-n_{2}^{(j)}+n_{3}^{(j)},n_{1}^{(j)},n_{2}^{(j)},n_{3}^{(j)})$ we have the relation
\begin{equation}
\label{anotherrelofPhis}
|\phi_{j}|\sim(n_{max}^{(j)})^{2}|\mu_{j}|\gtrsim|\mu_{j}|^{2},
\end{equation}
where we denote $n_{max}^{(j)}\coloneqq\max\{|n^{(j)}|,|n_{1}^{(j)}|,|n_{2}^{(j)}|,|n_{3}^{(j)}|\}$. 

We should keep in mind that everytime we apply differentiation by parts and split into resonant and non-resonant parts, we need to control the new frequencies that arise from this procedure. For this reason we define the sets
\begin{equation}
\label{sesee}
C_{J}:=\{|\tilde\phi_{J+1}|\leq(2J+3)^{3}|\tilde\phi_{J}|^{1-\frac1{100}}\}\cup\{|\tilde\phi_{J+1}|\leq(2J+3)^{3}|\phi_{1}|^{1-\frac1{100}}\}.
\end{equation}

Let us denote by $T_{\alpha}$ all the nodes of the ordered tree $T$ that are descendants of the node $\alpha\in T^{0}$, i.e. $T_{\alpha}=\{\beta\in T:\beta\leq\alpha,\ \beta\neq\alpha\}$.

We also need to define the principal and final "signs" of a node $a\in T$ which are functions from the tree $T$ into the set $\{\pm1\}$:
\begin{equation}
\label{signsign}
\mbox{psgn}(a)=\begin{cases}
+1,\ a\ \mbox{is not the middle child of his parent}\\
+1,\ a=r,\ \mbox{the root node}\\
-1,\ a\ \mbox{is the middle child of his parent}
\end{cases}
\end{equation}
\begin{equation}
\label{signsignsign}
\mbox{fsgn}(a)=\begin{cases}
+1,\ \mbox{psgn}(a)=+1\ \mbox{and}\ a\ \mbox{has an even number of middle predecessors}\\
-1,\ \mbox{psgn}(a)=+1\ \mbox{and}\ a\ \mbox{has an odd number of middle predecessors}\\
-1,\ \mbox{psgn}(a)=-1\ \mbox{and}\ a\ \mbox{has an even number of middle predecessors}\\
+1,\ \mbox{psgn}(a)=-1\ \mbox{and}\ a\ \mbox{has an odd number of middle predecessors},
\end{cases}
\end{equation}
where the root node $r\in T$ is not considered a middle parent. 

On the general $J$th step we will have to deal with $|\mathcal I(J)|$ operators of the $\tilde q^{J,t}_{T^0,\mathbf n}$ "type" each one corresponding to one of the ordered trees of the $J$th generation, $T\in T(J)$, where $\mathbf n$ is an arbitrary fixed index function on $T$. We have the subindices $T^0$ and $\mathbf n$ because each one of these operators has Fourier transform supported on the cubes with centers the frequencies assigned to the nodes that belong to $T^0$. 

The operators $\tilde q^{J,t}_{T^0,\mathbf n}$ are defined through their Fourier transforms as
\begin{equation}
\label{oops}
\mathcal F(\tilde q^{J,t}_{T^0,\mathbf n}(\{w_{n_\beta}\}_{\beta\in T^{\infty}}))(\xi)=e^{-it\xi^{2}}\mathcal F(R^{J,t}_{T^0,\mathbf n}(\{e^{-it\partial_{x}^{2}}w_{n_\beta}\}_{\beta\in T^{\infty}}))(\xi),
\end{equation}
where the operator $R^{J,t}_{T^0,\mathbf n}$ acts on the functions $\{w_{n_\beta}\}_{\beta\in T^{\infty}}$ as
\begin{equation}
\label{oops1}
R^{J,t}_{T^0,\mathbf n}(\{w_{n_\beta}\}_{\beta\in T^{\infty}})(x)=\int_{\R^{2J+1}}K^{(J)}_{T^0}(x,\{x_{\beta}\}_{\beta\in T^{\infty}})\Big[\otimes_{\beta\in T^{\infty}}w_{n_\beta}(x_{\beta})\Big]\ \prod_{\beta\in T^{\infty}} dx_{\beta},
\end{equation}
and the kernel $K^{(J)}_{T^0,\mathbf n}$ is defined as 
\begin{equation}
\label{oopss}
K^{(J)}_{T^0,\mathbf n}(x,\{x_{\beta}\}_{\beta\in T^{\infty}})=\mathcal F^{-1}(\rho^{(J)}_{T^{0},\mathbf n})(\{x-x_{\beta}\}_{\beta\in T^{\infty}}).
\end{equation}
The formula for the function $\rho^{(J)}_{T^0,\mathbf n}$ with ($|T^{\infty}|=2J+1$)-variables, $\xi_{\beta}$, $\beta\in T^{\infty}$, is
\begin{equation}
\label{jc}
\rho^{(J)}_{T^0,\mathbf n}(\{\xi_{\beta}\}_{\beta\in T^{\infty}})=\Big[\prod_{\alpha\in T^0}\sigma_{n_{\alpha}}\Big(\sum_{\beta\in T^{\infty}\cap T_{\alpha}}\mbox{fsgn}(\beta)\ \xi_{\beta}\Big)\Big]\frac1{\hat{\phi}_{T}},
\end{equation}
where we put
\begin{equation}
\label{yeah}
\hat{\phi}_{T}=\prod_{\alpha\in T^0}\tilde\phi_{\alpha},\ \tilde\phi_{\alpha}=\sum_{\beta\in T^{0}\setminus T_{\alpha}}\phi_{\beta},
\end{equation}
and for $\beta\in T^{0}$ we have
\begin{equation}
\label{yyeah}
\phi_{\beta}=\Phi_{4}(\xi_{\beta_{1}}-\xi_{\beta_{2}}+\xi_{\beta_{3}},\xi_{\beta_{1}},\xi_{\beta_{2}},\xi_{\beta_{3}}),
\end{equation}
where we impose the relation $\xi_{\alpha}=\xi_{\alpha_{1}}-\xi_{\alpha_{2}}+\xi_{\alpha_{3}}$ for every $\alpha\in T^{0}$ that appears in the calculations until we reach the terminal nodes of $T^{\infty}$. This is because in the definition of the function $ \rho^{J,t}_{T^0}$ we need the variables "$\xi$" to be assigned only at the terminal nodes of the tree $T$. We use the notation $\phi_{\beta}$ in similarity to $\phi_{j}$ of equation (\ref{muuu}) because this is the "continuous" version of the discrete quantity. In addition, the variables $\xi_{\alpha_{1}}, \xi_{\alpha_{2}}, \xi_{\alpha_{3}}$ that appear in expression \eqref{jc} are such that $\xi_{\alpha_{1}}\approx n_{\alpha_{1}}, \xi_{\alpha_{2}}\approx n_{\alpha_{2}}, \xi_{\alpha_{3}}\approx n_{\alpha_{3}}$ since the functions $\sigma_{n_{\alpha}}$ are supported in such a way. Therefore, $|\hat{\phi}_{T}|\sim|\hat{\phi}_{J}|$. 

For the induction step of our iteration process we need the following lemma which should be compared to Lemmata \ref{fir} and \ref{fir34}.

\begin{lemma}
\label{indu}
\begin{equation}
\|\tilde q^{J,t}_{T^0,\mathbf n}(\{v_{n_\beta}\}_{\beta\in T^{\infty}})\|_{2}\lesssim\Big(\prod_{\beta\in T^{\infty}}\|v_{n_\beta}\|_{2}\Big)\frac1{|\hat{\phi}_{T}|},
\end{equation}
for every tree $T\in T(J)$ and index function $\mathbf n\in\mathcal R(T)$.
\end{lemma}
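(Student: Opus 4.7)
The plan is to prove Lemma \ref{indu} by induction on $J$. The base case $J=1$ is precisely Lemma \ref{fir}, since $T^{0}=\{r\}$ gives $\hat{\phi}_{T} = \tilde\phi_{r} = \phi_{1} = \Phi_{4}(n_{1}-n_{2}+n_{3},n_{1},n_{2},n_{3})$. For the induction step one observes that $\tilde q^{J,t}_{T^{0},\mathbf n}$ is obtained from some $(J-1)$st generation operator by performing differentiation by parts at one terminal node, as in the passage from \eqref{main4}--\eqref{main6} to \eqref{sms}--\eqref{neq11}; this introduces exactly one additional factor $1/\tilde\phi_{J}$ in the multiplier and replaces one leaf $v_{n_{\beta^\ast}}$ by a trilinear expression in three new leaves. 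So it suffices to show that the passage to the next generation costs one additional phase factor, and that the leaf $L^{2}$ norms factor cleanly.

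Concretely, a uniform direct approach works as follows. By Plancherel one has $\|\tilde q^{J,t}_{T^{0},\mathbf n}\|_{2} = \|\mathcal F(\tilde q^{J,t}_{T^{0},\mathbf n})\|_{2}$, and by duality this equals $\sup_{\|h\|_{2}=1}|\langle \mathcal F(\tilde q^{J,t}_{T^{0},\mathbf n}),h\rangle|$. Unfolding \eqref{oops}--\eqref{jc} and using that $\xi_{r}$ is determined by the leaf variables through the recursion $\xi_{\alpha} = \xi_{\alpha_{1}}-\xi_{\alpha_{2}}+\xi_{\alpha_{3}}$ at each internal node $\alpha \in T^{0}$, the pairing becomes an integral over $\R^{2J+1}$ whose integrand is the product of $\hat h(\xi_{r})$, the cutoffs $\prod_{\alpha\in T^{0}}\sigma_{n_{\alpha}}(\xi_{\alpha})$, a unimodular phase, the denominator $1/\hat{\phi}_{T}(\{\xi_{\beta}\})$, and $\prod_{\beta\in T^{\infty}}\hat v_{n_{\beta}}(\xi_{\beta})$.

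The key observation is that the cutoffs $\sigma_{n_{\alpha}}$ and the unit-cube Fourier supports of the $v_{n_{\beta}}$ force $\xi_{\alpha}$ into the unit interval $[n_{\alpha},n_{\alpha}+1)$ for every node $\alpha\in T$. Using the factorization \eqref{exactexpre} of $\Phi_{4}$ together with the non-resonance conditions $n_{1}^{(j)}\not\approx n^{(j)}\not\approx n_{3}^{(j)}$ built into $\mathcal R(T)$, each $\phi_{j}$ is comparable in absolute value to its value at the integer frequencies; combined with the lower bounds on $|\tilde\phi_{\alpha}|$ coming from the sets $A_{N}(n)^{c}$ and $C_{j}^{c}$ that generated this operator, this yields $|\hat{\phi}_{T}(\{\xi_{\beta}\})| \sim |\hat{\phi}_{J}|$ uniformly on the support of the integrand, so the factor $1/|\hat{\phi}_{J}|$ may be pulled out.

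What remains is a multilinear form with no phase denominators. Iterating Cauchy--Schwarz and Plancherel from the leaves to the root---equivalently, iterating the trilinear bound \eqref{imppo} of Remark \ref{expl} along the tree---one obtains control by $\|h\|_{2}\prod_{\beta\in T^{\infty}}\|v_{n_{\beta}}\|_{2}$, and taking the supremum over $h$ finishes the proof. The main technical obstacle is the bookkeeping in this last step: one must organize the iterated pairings so that at every internal node $\alpha\in T^{0}$ the three subfactors decouple correctly via Plancherel on unit cubes, and so that the signs $\operatorname{fsgn}$ and $\operatorname{psgn}$ introduced in \eqref{signsign}--\eqref{signsignsign} and the complex conjugations coming from middle children are tracked consistently; this is essentially the tree recursion that reduces the $J$th generation estimate to the first generation bound \eqref{imppo}.
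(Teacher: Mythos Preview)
Your proposal is correct and follows essentially the same approach as the paper's own proof, which consists of the single sentence ``Follows by a duality argument, using \eqref{oops}, \eqref{oops1} and the nice localization of the functions $v_{n_{\beta}}$ on the Fourier side.'' You have simply unpacked that sentence: pair with a test function $h$, use the unit-interval Fourier localization of each $v_{n_{\beta}}$ and the cutoffs $\sigma_{n_{\alpha}}$ to replace $1/\hat{\phi}_{T}$ by the constant $1/|\hat{\phi}_{J}|$ (this is exactly the comparability $|\hat{\phi}_{T}|\sim|\hat{\phi}_{J}|$ the paper states just above the lemma), and then bound the remaining phase-free multilinear form by iterating the elementary trilinear estimate \eqref{imppo} along the tree---your opening inductive framing and the ``direct'' argument are really the same computation organized two ways.
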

\begin{proof}
Follows by a duality argument, using \eqref{oops}, \eqref{oops1} and the nice localization of the functions $v_{n_{\beta}}$ on the Fourier side.
\end{proof}

Given an index function $\mathbf n$ and $2J+1$ functions $\{v_{n_\beta}\}_{\beta\in T^{\infty}}$ and $\alpha\in T^{\infty}$ we define the action of the operator $N_{1}^{t}$ (see \eqref{main10}) on the set $\{v_{n_\beta}\}_{\beta\in T^{\infty}}$ to be the same set as before but with the difference that we have substituted the function $v_{n_\alpha}$ by the new function $N_{1}^{t}(v)(n_\alpha)$. We will denote this new set of functions $N_{1}^{t,\alpha}(\{v_{n_\beta}\}_{\beta\in T^{\infty}})$. Similarly, the action of the operator $R_{2}^{t}-R_{1}^{t}$ (see \eqref{main9}) on the set of functions $\{v_{n_\beta}\}_{\beta\in T^{\infty}}$ will be denoted by $(R_{2}^{t,\alpha}-R_{1}^{t,\alpha})(\{v_{n_\beta}\}_{\beta\in T^{\infty}})$. 

The operator of the $J$th step, $J\geq 2$, that we want to estimate is given by the formula
\begin{equation}
\label{fina}
N_{2}^{(J)}(v)(n):=\sum_{T\in T(J-1)}\sum_{\alpha\in T^{\infty}}\sum_{\substack{\mathbf n\in\mathcal R(T)\\ \mathbf n_{r}=n}}\tilde q^{J-1,t}_{T^0}(N_{1}^{t,\alpha}(\{v_{n_\beta}\}_{\beta\in T^{\infty}})).
\end{equation}

In the following keep in mind that from the splitting procedure we are on the sets $A_{N}(n)^{c},C_{1}^{c},\ldots,C_{J-1}^{c}$ and since $|\phi_{1}|>N$ we trivially have for all $j\in\{2,\ldots,J\}$
\begin{equation}
\label{neqcr7}
|\tilde{\phi}_{j}|\gg(2j+3)^{3}\max\{|\tilde{\phi}_{j-1}|^{1-\frac1{100}}, |\phi_{1}|^{1-\frac1{100}}\}>(2j+3)^{3}N^{1-\frac1{100}}.
\end{equation}

Applying differentiation by parts on the Fourier side we obtain the expression
\begin{equation}
\label{fina1}
N_{2}^{(J)}(v)(n)=\partial_{t}(N_{0}^{(J+1)}(v)(n))+N_{r}^{(J+1)}(v)(n)+N^{(J+1)}(v)(n), 
\end{equation}
where
\begin{equation}
\label{fina2}
N_{0}^{(J+1)}(v)(n):=\sum_{T\in T(J)}\sum_{\substack{\mathbf n\in\mathcal R(T)\\ \mathbf n_{r}=n}}\tilde q^{J,t}_{T^0,\mathbf n}(\{v_{n_\beta}\}_{\beta\in T^{\infty}}),
\end{equation}
and
\begin{equation}
\label{fina3}
N_{r}^{(J+1)}(v)(n):=\sum_{T\in T(J)}\sum_{\alpha\in T^{\infty}}\sum_{\substack{\mathbf n\in\mathcal R(T)\\ \mathbf n_{r}=n}}\tilde q^{J,t}_{T^0,\mathbf n}((R^{t,\alpha}_{2}-R^{t,\alpha}_{1})(\{v_{n_{\beta}}\}_{\beta\in T^{\infty}})),
\end{equation}
and
\begin{equation}
\label{fina4}
N^{(J+1)}(v)(n):=\sum_{T\in T(J)}\sum_{\alpha\in T^{\infty}}\sum_{\substack{\mathbf n\in\mathcal R(T)\\ \mathbf n_{r}=n}}\tilde q^{J,t}_{T^0,\mathbf n}(N_{1}^{t,\alpha}(\{v_{n_{\beta}}\}_{\beta\in T^{\infty}})).
\end{equation}
We also split the operator $N^{(J+1)}$ as the sum
\begin{equation}
\label{fina5}
N^{(J+1)}=N_{1}^{(J+1)}+N_{2}^{(J+1)},
\end{equation}
where $N_{1}^{(J+1)}$ is the restriction of $N^{(J+1)}$ onto $C_{J}$ and $N_{2}^{(J+1)}$ onto $C_{J}^{c}$. 

First we estimate the operators $N_{0}^{(J+1)}$ and $N_{r}^{(J+1)}$ by the following

\begin{lemma}
\label{finaal}
$$\|N_{0}^{(J+1)}(v)\|_{l^{q}L^{2}}\lesssim N^{-\frac{J}2(1-\frac1{100})}\|v\|_{M_{2,q}}^{2J+1},$$
and
$$\|N_{0}^{(J+1)}(v)-N_{0}^{(J+1)}(w)\|_{l^{q}L^{2}}\lesssim N^{-\frac{J}2(1-\frac1{100})}(\|v\|_{M_{2,q}}^{2J}+\|w\|_{M_{2,q}}^{2J})\|v-w\|_{M_{2,q}}.$$

$$\|N_{r}^{(J+1)}(v)\|_{l^{q}L^{2}}\lesssim N^{-\frac{J}2(1-\frac1{100})}\|v\|_{M_{2,q}}^{2J+3},$$
and
$$\|N_{r}^{(J+1)}(v)-N_{r}^{(J+1)}(w)\|_{l^{q}L^{2}}\lesssim N^{-\frac{J}2(1-\frac1{100})}(\|v\|_{M_{2,q}}^{2J+2}+\|w\|_{M_{2,q}}^{2J+2})\|v-w\|_{M_{2,q}}.$$
\end{lemma}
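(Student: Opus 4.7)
The strategy is to start from the pointwise bound of Lemma \ref{indu} and then perform the sums over trees, index functions, and $n$ in a suitable order. For each $T \in T(J)$ and each admissible $\mathbf n \in \mathcal R(T)$ with $\mathbf n_r = n$, Lemma \ref{indu} gives
$$\|\tilde q^{J,t}_{T^0,\mathbf n}(\{v_{n_\beta}\}_{\beta\in T^\infty})\|_2 \lesssim \frac{1}{|\hat\phi_T|}\prod_{\beta\in T^\infty}\|v_{n_\beta}\|_2.$$
Because the iteration takes place on $A_N(n)^c \cap C_1^c \cap \cdots \cap C_{J-1}^c$, the bound \eqref{neqcr7} yields $|\tilde\phi_j| \gtrsim N^{1-\frac{1}{100}}$ for every $j \in \{1,\ldots,J\}$, hence $|\hat\phi_T| \gtrsim N^{J(1-\frac{1}{100})}$. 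I would split $1/|\hat\phi_T| = |\hat\phi_T|^{-1/2}\cdot |\hat\phi_T|^{-1/2}$ and use the first factor to pull out the desired decay $N^{-\frac{J}{2}(1-\frac{1}{100})}$.

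The next step is Hölder's inequality with conjugate pair $(q',q)$ in the sum over index functions:
$$\sum_{\mathbf n:\mathbf n_r=n}\frac{1}{|\hat\phi_T|^{1/2}}\prod_\beta \|v_{n_\beta}\|_2 \leq \Big(\sum_{\mathbf n}\frac{1}{|\hat\phi_T|^{q'/2}}\Big)^{1/q'}\Big(\sum_{\mathbf n}\prod_\beta \|v_{n_\beta}\|_2^q\Big)^{1/q}.$$
Taking the $q$-th power, summing in $n$, and applying Young's inequality in $l^1(\mathbb Z)$ as in Lemma \ref{lemle}, the second bracket produces the factor $\|v\|_{M_{2,q}}^{2J+1}$. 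For the first bracket, since $q \leq 2$ forces $q'/2 \geq 1$ and $|\hat\phi_T|$ is large, one has $|\hat\phi_T|^{-q'/2} \leq |\hat\phi_T|^{-1}$, so it suffices to bound $\sum_{\mathbf n:\mathbf n_r=n} 1/|\hat\phi_T|$ uniformly in $n$. I would establish this by descending induction on the generations: using \eqref{relofPhis} in the form $|\phi_j| \gtrsim |\mu_j|(n_{\max}^{(j)})^2$ together with the fact that $|\tilde\phi_{j-1}| \ll |\tilde\phi_j|$ on $C_{j-1}^c$ (hence $|\tilde\phi_j| \gtrsim |\phi_j|$), and summing from the leaves of the tree towards the root, at each step only the three fresh variables of the current node appear and the resulting two-dimensional sum converges exactly as in the proof of Lemma \ref{fir3}. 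The sum over the $(2J-1)!!$ ordered trees in $T(J)$ contributes only a combinatorial constant.

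The estimate for $N_r^{(J+1)}$ follows the same outline with one modification: in each summand of \eqref{fina3} one leaf function is replaced by $(R_2^{t,\alpha}-R_1^{t,\alpha})(\{v_{n_\beta}\})$, which by Lemma \ref{lem} contributes an extra $\|v\|_{M_{2,q}}^2$, yielding the exponent $2J+3$. The Lipschitz bounds follow from the telescoping identity $a_1a_2a_3 - b_1b_2b_3 = (a_1-b_1)a_2a_3 + b_1(a_2-b_2)a_3 + b_1b_2(a_3-b_3)$, applied inductively to each multilinear operator on the tree and combined with the same bounds.

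The main obstacle is the uniform summability $\sum_{\mathbf n:\mathbf n_r=n} 1/|\hat\phi_T| \lesssim 1$, because the frequencies at different generations are coupled via $n^{(j)} = n_k^{(m)}$ for some earlier generation $m<j$ and some $k\in\{1,2,3\}$. The order of summation must respect the tree structure so that at every step exactly two free variables remain at the current node, and the combined use of the factor $(n_{\max}^{(j)})^2$ and the tangential decay encoded by $|\mu_j|$ produces a convergent contribution (possibly at the cost of replacing the exponent $1$ by $1+\epsilon$ via $(n_{\max}^{(j)})^2 \gtrsim |\mu_j|^\epsilon$, which is absorbed into the implicit constants).
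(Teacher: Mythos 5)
Your overall skeleton (Lemma \ref{indu}, H\"older in the index functions with exponents $(q',q)$, use of $q'\geq 2$, Young's inequality in the discrete variable, Lemma \ref{lem} for $N_{r}^{(J+1)}$, telescoping for the difference estimates) coincides with the paper's, but the core analytic step has a genuine gap. You reduce matters to the uniform bound $\sum_{\mathbf n:\,\mathbf n_{r}=n}1/|\hat{\phi}_{T}|\lesssim C^{J}$, and you justify the leaf-to-root summation by claiming that on $C_{j-1}^{c}$ one has $|\tilde{\phi}_{j-1}|\ll|\tilde{\phi}_{j}|$ and hence $|\tilde{\phi}_{j}|\gtrsim|\phi_{j}|$. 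That claim is false: by \eqref{sesee} the complement of $C_{j-1}$ only gives $|\tilde{\phi}_{j}|\gg(2j+1)^{3}|\tilde{\phi}_{j-1}|^{1-\frac{1}{100}}$, a sublinear power, so it is perfectly possible that $|\tilde{\phi}_{j}|\ll|\tilde{\phi}_{j-1}|$ and $|\phi_{j}|=|\tilde{\phi}_{j}-\tilde{\phi}_{j-1}|\gg|\tilde{\phi}_{j}|$ (the fresh phase $\phi_{j}$ nearly cancels $\tilde{\phi}_{j-1}$). In that regime the single factor $1/|\tilde{\phi}_{j}|$ carries essentially no decay in the two free variables of generation $j$ (it is merely bounded below by $\sim N^{1-\frac{1}{100}}$), so your induction ``exactly as in Lemma \ref{fir3}'' does not close; note that in Lemma \ref{fir3} it is the set $C_{1}$ itself that forces $|\phi_{2}|\sim|\phi_{1}|$, and no such comparability is available on the sets $C_{j}^{c}$.

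This is precisely why the paper works with the \emph{square} of each $|\tilde{\phi}_{j}|$: from \eqref{neqcr7} and the triangle inequality $|\phi_{j}|\leq|\tilde{\phi}_{j-1}|+|\tilde{\phi}_{j}|$ one obtains the two-term product inequality \eqref{godhm}, $(2j)^{3}N^{1-\frac{1}{100}}|\phi_{j}|\ll|\tilde{\phi}_{j-1}||\tilde{\phi}_{j}|$, and telescoping yields \eqref{g490}, i.e. $\prod_{j=1}^{J}|\tilde{\phi}_{j}|^{2}\gg\prod_{j=1}^{J}\big[(2j+3)^{3}N^{1-\frac{1}{100}}|\phi_{j}|\big]$. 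Only after bounding the $l^{q'}$ bracket by the $l^{2}$ bracket (this is where $q'\geq2$ enters in the paper) can one convert $\prod_{j}|\tilde{\phi}_{j}|^{-2}$ into $N^{-J(1-\frac{1}{100})}\prod_{j}|\phi_{j}|^{-1}\lesssim N^{-J(1-\frac{1}{100})}\prod_{j}|\mu_{j}|^{-(1+)}$ via \eqref{er5376}; it is the full power of each $|\phi_{j}|$, i.e. of $|\mu_{j}|^{1+}$, that makes the sum over index functions converge with constant $C^{J}$, and the decay $N^{-\frac{J}{2}(1-\frac{1}{100})}$ appears only upon taking the square root at the end. Your version spends half of $|\hat{\phi}_{T}|$ on the $N$-decay up front and then needs summability from the remaining first powers $\prod_{j}|\tilde{\phi}_{j}|^{-1}$ alone; the cancellation scenario above shows the pointwise comparison you invoke cannot deliver this, and even the correct pairing inequality only gives $\prod_{j}|\tilde{\phi}_{j}|\gg\prod_{j}N^{\frac{1}{2}(1-\frac{1}{100})}|\phi_{j}|^{\frac{1}{2}}$, where $|\phi_{j}|^{-1/2}\lesssim|\mu_{j}|^{-1}$ is not summable in the two free variables. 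So either prove the stronger counting bound $\sum_{\mathbf n}1/|\hat{\phi}_{T}|\lesssim C^{J}$ by a genuinely different level-set counting argument, or follow the route \eqref{godhm}--\eqref{g490} with the squared partial sums.
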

\begin{proof}
From \eqref{yeah} we have that $|\phi_{j}|\lesssim\max\{|\tilde{\phi}_{j-1}|, |\tilde{\phi}_{j}|\}$ which together with \eqref{neqcr7} implies  
\begin{equation}
\label{godhm}
(2j)^{3}N^{1-\frac1{100}}|\phi_{j}|\ll |\tilde{\phi}_{j-1}||\tilde{\phi}_{j}|,\quad \forall j\in\{2,\ldots,J\}.
\end{equation}
This together with the use of \eqref{neqcr7} again shows that
\begin{equation}
\label{g490}
\prod_{j=1}^{J}\Big[(2j+3)^{3}N^{1-\frac1{100}}|\phi_{j}|\Big]\ll |\phi_{1}||\tilde{\phi}_{J}|\prod_{j=2}^{J}\Big[(2j)^{3}N^{1-\frac1{100}}|\phi_{j}|\Big]\ll \prod_{j=1}^{J}|\tilde{\phi}_{j}|^{2}.
\end{equation}
Recalling that 
\begin{equation}
\label{er5376}
\frac1{|\phi_{j}|}\sim\frac1{|\mu_{j}|(n_{max}^{(j)})^{2}|}\lesssim\frac1{|\mu_{j}|^{1+}}
\end{equation}
we obtain
\begin{equation}
\label{nvbt6}
\sum_{\substack{\mathbf n\in\mathcal R(T)\\ \mathbf n_{r}=n}}\prod_{j=1}^{J}\frac1{|\tilde{\phi}_{j}|^{2}}\lesssim\frac{N^{-J(1-\frac1{100})}}{\prod_{j=1}^{J}(2j+3)^{3}}\sum_{\substack{\mathbf n\in\mathcal R(T)\\ \mathbf n_{r}=n}}\prod_{j=1}^{J}\frac1{|\phi_{j}|}\lesssim\frac{N^{-J(1-\frac1{100})}}{\prod_{j=1}^{J}(2j+3)^{3}}\sum_{\substack{\mathbf n\in\mathcal R(T)\\ \mathbf n_{r}=n}}\prod_{j=1}^{J}\frac1{|\mu_{j}|^{1+}}
\end{equation}
where the last expression is bounded from above by 
\begin{equation}
\label{bvv89}
\frac{C^{J}N^{-J(1-\frac1{100})}}{\prod_{j=1}^{J}(2j+3)^{3}}
\end{equation}
for some constant $C>0$.

By Lemma \ref{indu} we obtain
\begin{equation}
\label{d325v}
\sum_{\substack{\mathbf n\in\mathcal R(T)\\ \mathbf n_{r}=n}}\|\tilde q^{J,t}_{T^0,\mathbf n}(\{v_{\beta}\}_{\beta\in T^\infty})\|_{2}\lesssim\sum_{\substack{\mathbf n\in\mathcal R(T)\\ \mathbf n_{r}=n}}\Big(\prod_{\beta\in T^{\infty}}\|v_{n_\beta}\|_{2}\Big)\Big(\prod_{k=1}^{J}\frac1{|\tilde\phi_{k}|}\Big)
\end{equation}
which by H\"older's inequality and \eqref{nvbt6}, \eqref{bvv89} is controlled by
$$\Big(\sum_{\substack{|\phi_{1}|>N\\ |\tilde\phi_{j}|>(2j+1)^{3}N^{1-\frac1{100}}\\ j=2,\ldots,J}}\prod_{j=1}^{J}\frac1{|\tilde{\phi}_{j}|^{q'}}\Big)^{\frac1{q'}}\Big(\sum_{\substack{\mathbf n\in\mathcal R(T)\\ \mathbf n_{r}=n}}\prod_{\beta\in T^{\infty}}\|v_{n_\beta}\|_{2}^{q}\Big)^{\frac1{q}}\leq$$
$$\Big(\sum_{\substack{|\phi_{1}|>N\\ |\tilde\phi_{j}|>(2j+1)^{3}N^{1-\frac1{100}}\\ j=2,\ldots,J}}\prod_{j=1}^{J}\frac1{|\tilde{\phi}_{j}|^{2}}\Big)^{\frac1{2}}\Big(\sum_{\substack{\mathbf n\in\mathcal R(T)\\ \mathbf n_{r}=n}}\prod_{\beta\in T^{\infty}}\|v_{n_\beta}\|_{2}^{q}\Big)^{\frac1{q}}\lesssim$$
\begin{equation}
\label{wow4352}
\frac{C^{\frac{J}2}N^{-\frac{J}2(1-\frac1{100})}}{\prod_{j=1}^{J}(2j+3)^{\frac32}}\Big(\sum_{\substack{\mathbf n\in\mathcal R(T)\\ \mathbf n_{r}=n}}\prod_{\beta\in T^{\infty}}\|v_{n_\beta}\|_{2}^{q}\Big)^{\frac1{q}}.
\end{equation}
Applying Young's inequality for the last summand implies the desired estimate. For the operator $N_{r}^{(J+1)}$ the proof is the same but in addition we use Lemma \ref{lem} for the operator $R_{2}^{t}-R_{1}^{t}$. 
\end{proof}

\begin{remark}
\label{important94536}
Note that there is an extra factor $\sim J$ when we estimate the differences $N_{0}^{(J+1)}(v)-N_{0}^{(J+1)}(w)$ since $|a^{2J+1}-b^{2J+1}|\lesssim(\sum_{j=1}^{2J+1}a^{2J+1-j}b^{j-1})|a-b|$ has $O(J)$ many terms. Also, we have $c_{J}=|\mathcal I(J)|$ many summands in the operator $N_{0}^{(J+1)}$ since there are $c_{J}$ many trees of the $J$th generation and $c_{J}$ behaves like a double factorial in $J$ (see (\ref{setsetset3})). However, these extra terms do not cause any problem since the constant we obtain from \eqref{wow4352} decays like a fractional power of a double factorial in $J$, or to be more precise we have 
\begin{equation}
\label{factori}
\frac{C^{\frac{J}2}\ c_{J}}{\prod_{j=2}^{J}(2j+3)^{\frac32}}\sim\frac1{J^\frac{J}2}.
\end{equation}
\end{remark}

Here is the estimate of the operator $N_{1}^{(J+1)}$.

\begin{lemma}
\label{finaal2}
$$\|N_{1}^{(J+1)}(v)\|_{l^{q}L^{2}}\lesssim N^{-\frac{J-1}2(1-\frac1{100})}\|v\|_{M_{2,q}}^{2J+3},$$
and
$$\|N_{1}^{(J+1)}(v)-N_{1}^{(J+1)}(w)\|_{l^{q}L^{2}}\lesssim N^{-\frac{J-1}2(1-\frac1{100})}(\|v\|_{M_{2,q}}^{2J+2}+\|w\|_{M_{2,q}}^{2J+2})\|v-w\|_{M_{2,q}}.$$
\end{lemma}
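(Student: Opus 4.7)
My plan is to follow the strategy of Lemma \ref{finaal} with the key modification that the operator $N_{1}^{(J+1)}$ carries only $J$ phase factors $1/|\tilde\phi_{j}|$ in its denominator, since differentiation by parts has not been performed at generation $J+1$; in exchange, one is restricted to the set $C_{J}$. For fixed $T \in T(J)$, $\alpha \in T^{\infty}$ and $\mathbf n \in \mathcal R(T)$ with $\mathbf n_{r} = n$, I first expand $N_{1}^{t,\alpha}(\{v_{n_{\beta}}\})$ via \eqref{main10} into a sum over new frequencies $(n_{1}^{(J+1)},n_{2}^{(J+1)},n_{3}^{(J+1)})$ satisfying $n_{\alpha} \approx n_{1}^{(J+1)}-n_{2}^{(J+1)}+n_{3}^{(J+1)}$ and the usual non-resonance; applying Lemma \ref{indu} to the outer operator $\tilde{q}^{J,t}_{T^{0},\mathbf n}$ and \eqref{imppo} (Remark \ref{expl}) to the inner $Q^{1,t}_{n_{\alpha}}$ yields the pointwise bound
\begin{equation*}
\|\tilde q^{J,t}_{T^{0},\mathbf n}(N_{1}^{t,\alpha}(\{v_{n_{\beta}}\}))\|_{2} \lesssim \sum \frac{\prod_{\beta' \in (T')^{\infty}}\|v_{n_{\beta'}}\|_{2}}{\prod_{j=1}^{J}|\tilde\phi_{j}|},
\end{equation*}
where $T' \in T(J+1)$ is the extended ordered tree obtained by attaching three children to $\alpha$, and the sum runs over triples subject to the non-resonance and $C_{J}$ restrictions.

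Summing over $\alpha$, $T\in T(J)$ and $\mathbf n$, and taking $l^{q}(\mathbb Z)$ in $n$, H\"older's inequality applied exactly as in the proof of Lemma \ref{finaal} produces
\begin{equation*}
\|N_{1}^{(J+1)}(v)\|_{l^{q}L^{2}} \lesssim \Big(\sum_{(\ast)}\prod_{j=1}^{J}\frac{1}{|\tilde\phi_{j}|^{q'}}\Big)^{\frac{1}{q'}}\Big(\sum_{\mathbf n' \in \mathcal R(T')}\prod_{\beta' \in (T')^{\infty}}\|v_{n_{\beta'}}\|_{2}^{q}\Big)^{\frac{1}{q}},
\end{equation*}
where $(\ast)$ denotes the combined constraints $|\phi_{1}|>N$, $|\tilde\phi_{j}|>(2j+1)^{3}N^{1-\frac{1}{100}}$ for $j = 2,\ldots,J$, and the restriction to $C_{J}$ at generation $J+1$. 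Young's inequality in $l^{1}(\mathbb Z)$ dispatches the second factor by $\|v\|_{M_{2,q}}^{2J+3}$.

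The heart of the argument is the first factor. Using $q' \geq 2$ I replace $q'$ by $2$, and then perform the sum generation by generation starting from the innermost. The new ingredient compared to Lemma \ref{finaal} is the sum over the $(J+1)$-st generation frequencies at $\alpha$ on $C_{J}$: since $\phi_{J+1} = \tilde\phi_{J+1}-\tilde\phi_{J}$ with $|\tilde\phi_{J+1}|\lesssim\max\{|\tilde\phi_{J}|,|\phi_{1}|\}^{1-\frac{1}{100}}$, the quantity $\phi_{J+1}$ must lie in a window of that size around $-\tilde\phi_{J}$; combining this with the divisor bound \eqref{num} applied to $\mu_{J+1}$ via the factorization \eqref{exactexpre} shows that the resulting sum with $n_{\alpha}$ fixed absorbs exactly one factor $1/|\tilde\phi_{J}|^{2}$, effectively leaving only $J-1$ active phase factors. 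The remaining $J-1$ generations contribute $N^{-(J-1)(1-\frac{1}{100})}$ by the estimates already used in Lemma \ref{finaal}, and taking the square root yields the claimed $N^{-\frac{J-1}{2}(1-\frac{1}{100})}$. The Lipschitz estimate is obtained by multilinear expansion of the $(2J+3)$-linear form, the extra combinatorial factor being harmless in view of the factorial decay \eqref{factori}. The main obstacle is precisely this combinatorial accounting: verifying that the $C_{J}$ restriction, paired with the divisor bound, absorbs exactly one (and not more) of the $1/|\tilde\phi_{j}|$ denominators, so that exactly $J-1$ genuine decay factors survive.
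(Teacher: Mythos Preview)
Your overall architecture matches the paper exactly: apply Lemma \ref{indu} to the outer operator and \eqref{imppo} to the inner $Q^{1,t}_{n_{\alpha}}$, then H\"older to split off the phase-factor sum, replace $q'$ by $2$, and handle the norm factor by Young's inequality. The difference lies in how you treat the sum over the new $(J{+}1)$-st generation frequencies under the $C_{J}$ restriction.

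The paper does \emph{not} use the divisor bound \eqref{num} here. Instead it observes that on $C_{J}$ one has $|\phi_{J+1}|\lesssim J^{3}|\tilde\phi_{J}|$, and combines this with \eqref{godhm} to obtain
\[
|\phi_{1}|\,|\phi_{J+1}|\prod_{j=2}^{J}(2j+3)^{3}N^{1-\frac1{100}}|\phi_{j}|\;\lesssim\; J^{3}\prod_{j=1}^{J}|\tilde\phi_{j}|^{2},
\]
which converts $\prod_{j=1}^{J}|\tilde\phi_{j}|^{-2}$ into $N^{-(J-1)(1-\frac1{100})}\prod_{j=1}^{J+1}|\phi_{j}|^{-1}$ (up to the double-factorial constants). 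The extra factor $|\phi_{J+1}|^{-1}\lesssim|\mu_{J+1}|^{-(1+)}$ is precisely what makes the sum over the \emph{new} frequencies converge, exactly as in Lemma \ref{finaal}. No counting argument is needed.

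Your proposed route via a divisor count has a gap as written. The $C_{J}$ constraint confines $\phi_{J+1}$---a degree-four polynomial in the new frequencies---to a window; it does not directly confine $\mu_{J+1}=2(n_{\alpha}-n_{1}^{(J+1)})(n_{\alpha}-n_{3}^{(J+1)})$. Passing from a window on $\phi_{J+1}$ to a divisor count on $\mu_{J+1}$ requires control on $n_{\max}^{(J+1)}$ that you have not supplied, and without it the number of admissible triples $(n_{1}^{(J+1)},n_{2}^{(J+1)},n_{3}^{(J+1)})$ with $n_{\alpha}$ fixed is not bounded by anything like $|\tilde\phi_{J}|^{1-\frac1{100}+}$. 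In particular the phrase ``absorbs exactly one factor $1/|\tilde\phi_{J}|^{2}$'' is not justified by what you have written. The paper's manoeuvre sidesteps this entirely by trading the $C_{J}$ restriction for an additional \emph{summable} weight $|\phi_{J+1}|^{-1}$, rather than attempting to count lattice points.
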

\begin{proof}
Since we are on $C_{J}$ the requirement $|\tilde{\phi}_{J+1}|=|\tilde{\phi}_{J}+\phi_{J+1}|\lesssim(2J+3)^{3}|\tilde{\phi}_{J}|^{1-\frac1{100}}$ (similar considerations for the requirement $|\tilde{\phi}_{J+1}|\lesssim(2J+3)^{3}|\phi_{1}|^{1-\frac1{100}}$) implies that $|\phi_{J+1}|\lesssim J^{3}|\tilde{\phi}_{J}|$. With the use of \eqref{godhm} we obtain
\begin{equation}
\label{godhm1}
|\phi_{1}||\phi_{J+1}|\prod_{j=2}^{J}(2j+3)^{3}N^{1-\frac1{100}}|\phi_{j}|\lesssim J^{3}\prod_{j=1}^{J}|\tilde{\phi}_{j}|^{2}.
\end{equation}
Following the argument in \eqref{nvbt6} and \eqref{bvv89} with the use of \eqref{er5376} we arrive at 
\begin{equation}
\label{godhm2}
\sum_{\substack{\mathbf n\in\mathcal R(T)\\ \mathbf n_{r}=n}}\prod_{j=1}^{J}\frac1{|\tilde{\phi}_{j}|^{2}}\lesssim\frac{N^{-(J-1)(1-\frac1{100})}}{\prod_{j=2}^{J-1}(2j+3)^{3}}\sum_{\substack{\mathbf n\in\mathcal R(T)\\ \mathbf n_{r}=n}}\prod_{j=1}^{J+1}\frac1{|\phi_{j}|}\lesssim\frac{C^{J+1}\ N^{-(J-1)(1-\frac1{100})}}{\prod_{j=2}^{J-1}(2j+3)^{3}}
\end{equation}
which finishes the proof since trivially
$$\sum_{\substack{\mathbf n\in\mathcal R(T)\\ \mathbf n_{r}=n}}\|\tilde q^{J,t}_{T^0,\mathbf n}(N_{1}^{t,\alpha}(\{v_{n_{\beta}}\}_{\beta\in T^{\infty}}))\|_{2}\lesssim$$
$$\sum_{\substack{\mathbf n\in\mathcal R(T)\\ \mathbf n_{r}=n}}\Big(\|v_{n_{\alpha_{1}}}\|_{2}\|v_{n_{\alpha_{2}}}\|_{2}\|v_{n_{\alpha_{3}}}\|_{2}\prod_{\beta\in T^{\infty}\setminus\{\alpha\}}\|v_{n_\beta}\|_{2}\Big)\Big(\prod_{k=1}^{J}\frac1{|\tilde\phi_{k}|}\Big)$$
$$\Big(\sum_{\substack{|\phi_{1}|>N\\ |\tilde\phi_{j}|>(2j+1)^{3}N^{1-\frac1{100}}\\ j=2,\ldots,J}}\prod_{j=1}^{J}\frac1{|\tilde{\phi}_{j}|^{q'}}\Big)^{\frac1{q'}}
\Big(\sum_{\substack{\mathbf n\in\mathcal R(T)\\ \mathbf n_{r}=n}}\|v_{n_{\alpha_{1}}}\|_{2}^{q}\|v_{n_{\alpha_{2}}}\|_{2}^{q}\|v_{n_{\alpha_{3}}}\|_{2}^{q}\prod_{\beta\in T^{\infty}\setminus\{\alpha\}}\|v_{n_{\beta}}\|_{2}^{q}\Big)^{\frac1{q}}.$$
The first summand is controlled by \eqref{godhm2} and for the second summand we apply Young's inequality.
\end{proof}

\begin{remark}
\label{reme}
For $s>0$ we have to observe that all previous lemmata hold true if we replace the $l^{q}L^{2}$ norm by the $l^{q}_{s}L^{2}$ norm and the $M_{2,q}(\R)$ norm by the $M_{2,q}^{s}(\R)$ norm. To this end notice that for large $n^{(j)}$ there exists at least one of $n_{1}^{(j)},n_{2}^{(j)},n_{3}^{(j)}$ such that $|n_{k}^{(j)}|\geq\frac13|n^{(j)}|$, $k\in\{1,2,3\}$, since we have the relation $n^{(j)}\approx n_{1}^{(j)}-n_{2}^{(j)}+n_{3}^{(j)}$. Thus, in the estimates of the $J$th generation, there exists at least one frequency $n_{k}^{(j)}$ for some $j\in\{1,\ldots,J\}$ with the property
$$\langle n\rangle^{s}\leq 3^{js}\langle n_{k}^{(j)}\rangle^{s}\leq 3^{Js}\langle n_{k}^{(j)}\rangle ^{s}.$$
This exponential growth does not affect our calculations due to the double factorial growth in the denominator of (\ref{factori}).
\end{remark}

\end{section}

\begin{section}{existence and uniqueness}
\label{cbgg342}
The calculations of this section are the same as the ones given in \cite[Subsections 2.3 and 2.4]{NP}. The only small difference is the following lemma which deals with the behaviour of the remainder operator $N_{2}^{J}$ as $J\to\infty$.

\begin{lemma}
\label{finafinafina}
For $v\in M_{2,q}^{s}(\R)$, with $s\geq0$ and $q\in[1,2]$, if $M_{2,q}^{s}(\R)\hookrightarrow L^{3}(\R)$ then we have
$$\lim_{J\to\infty}\|N_{2}^{(J)}(v)\|_{l^{\infty}L^{2}}=0.$$
\end{lemma}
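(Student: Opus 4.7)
The operator $N_2^{(J)}(v)(n)$ defined in \eqref{fina} is the unreduced remainder after $J-1$ differentiation-by-parts steps: a sum of $\tilde q^{J-1,t}_{T^0,\mathbf n}$'s applied to configurations where one terminal input $v_{n_\alpha}$ has been replaced by the trilinear $N_1^t(v)(n_\alpha)$, with the indices $\mathbf n$ constrained to $A_N(n)^c\cap C_1^c\cap\cdots\cap C_{J-2}^c$. The plan is to decouple this outermost $N_1^t$ from the tree summation via a uniform-in-$n_\alpha$ bound on $\|N_1^t(v)(n_\alpha)\|_2$ (this is precisely where the hypothesis $M_{2,q}^s(\R)\hookrightarrow L^3(\R)$ enters), and then feed that bound into the combinatorial/phase-factor machinery already used in Lemma \ref{finaal}.

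The first step is therefore to establish $\|N_1^t(v)(n_\alpha)\|_2\lesssim\|v\|_{M_{2,q}^s}^3$ uniformly in $n_\alpha$ and $t$. Setting $u=e^{-it\partial_x^4}v$, the identities \eqref{anothereqforv}, \eqref{main9}, \eqref{main10} combine to yield
\[
\pm i\,\Box_{n_\alpha}\bigl(e^{it\partial_x^4}(|u|^2 u)\bigr)=R_2^t(v)(n_\alpha)-R_1^t(v)(n_\alpha)+N_1^t(v)(n_\alpha).
\]
Since $\sigma_{n_\alpha}=1_{[n_\alpha,n_\alpha+1]}$ has unit $L^2$-mass, I would estimate
\[
\|\Box_{n_\alpha}(|u|^2 u)\|_2=\|\sigma_{n_\alpha}\mathcal F(|u|^2 u)\|_2\leq\|\sigma_{n_\alpha}\|_2\|\mathcal F(|u|^2 u)\|_\infty\leq\||u|^2 u\|_1=\|u\|_3^3,
\]
and then use that $e^{-it\partial_x^4}$ is an isometry on $M_{2,q}^s(\R)$ together with the hypothesized embedding to get $\|u\|_3\lesssim\|v\|_{M_{2,q}^s}$. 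The resonant pieces are absorbed by Lemma \ref{lem} via $\|R_j^t(v)(n_\alpha)\|_2\leq\|R_j^t(v)\|_{l^\infty L^2}\leq\|R_j^t(v)\|_{l^q L^2}\lesssim\|v\|_{M_{2,q}^s}^3$.

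Plugging this uniform bound into Lemma \ref{indu} yields, for each $T\in T(J-1)$, $\alpha\in T^\infty$ and $\mathbf n\in\mathcal R(T)$ with $\mathbf n_r=n$,
\[
\|\tilde q^{J-1,t}_{T^0,\mathbf n}(N_1^{t,\alpha}(\{v_{n_\beta}\}))\|_2\lesssim\|v\|_{M_{2,q}^s}^3\,\frac{\prod_{\beta\neq\alpha}\|v_{n_\beta}\|_2}{|\hat\phi_T|}.
\]
What remains is to estimate $\sum_{T,\alpha}\sum_{\mathbf n:\,\mathbf n_r=n}\prod_{\beta\neq\alpha}\|v_{n_\beta}\|_2/|\hat\phi_T|$, which has the same shape as the sum controlled in Lemma \ref{finaal} but with $J-1$ generations and one missing terminal factor. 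I would treat it identically: H\"older split in $\mathbf n$, control of $[\sum 1/|\hat\phi_T|^{q'}]^{1/q'}$ by the $\ell^2$-bound used in \eqref{nvbt6}--\eqref{bvv89} (available because $|\tilde\phi_j|\gtrsim N^{1-1/100}$ for $j=1,\ldots,J-1$ on our restriction sets), Young's inequality on the remaining $\ell^q$-product, and absorption of the double-factorial tree count into $\prod_{j=1}^{J-1}(2j+3)^{3/2}$ as in Remark \ref{important94536}. The upshot, uniform in $n$, is
\[
\|N_2^{(J)}(v)(n)\|_2\lesssim\frac{C^J\,N^{-(J-1)(1-1/100)/2}}{J^{J/2}}\,\|v\|_{M_{2,q}^s}^{2J+1},
\]
which tends to $0$ as $J\to\infty$ because $J^{J/2}$ beats $C^J\|v\|^{2J+1}$.

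The main obstacle, and the only genuinely new step compared to \cite{NP}, is the $n_\alpha$-uniform bound on $\|N_1^t(v)(n_\alpha)\|_2$. Without the $L^3$-embedding one cannot peel $N_1^t$ off as a constant in $n_\alpha$ and would be forced to differentiate by parts once more, which is exactly what the tail operator $N_2^{(J)}$ is designed to avoid; hence $M_{2,q}^s(\R)\hookrightarrow L^3(\R)$ is the essential input ensuring convergence.
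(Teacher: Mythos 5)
Your proposal is correct and takes essentially the same route as the paper: the decisive input is the uniform-in-$n$ bound $\|\Box_{n}(|u|^{2}u)\|_{2}\lesssim\||u|^{2}u\|_{1}=\|u\|_{3}^{3}\lesssim\|v\|_{M_{2,q}^{s}}^{3}$ supplied by the $L^{3}$-embedding, which is then fed into the Lemma \ref{indu}/H\"older/Young/phase-factor machinery with the double-factorial gain \eqref{factori}, exactly as in the paper. The only (harmless) difference is bookkeeping: the paper first rewrites $N_{2}^{(J)}$ as in \eqref{ppp}, placing $\partial_{t}v_{n_{\alpha}}$ at a terminal node and treating $\partial_{t}(N_{0}^{(J+1)})$ separately, whereas you estimate $\|N_{1}^{t}(v)(n_{\alpha})\|_{2}$ directly via \eqref{mainmain} by $\|\partial_{t}v_{n_{\alpha}}\|_{2}+\|(R_{2}^{t}-R_{1}^{t})(v)(n_{\alpha})\|_{2}$ and Lemma \ref{lem} --- the same ingredients in a slightly different order, yielding the same $J^{-J/2}$ decay.
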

\begin{proof}
By \eqref{fina1} we can write the remainder operator as the following sum
\begin{equation}
\label{ppp}
N_{2}^{(J)}(v)(n)=\partial_{t}(N_{0}^{(J+1)}(v)(n))+\sum_{T\in T(J)}\sum_{\alpha\in T^{\infty}}\sum_{\substack{\mathbf n\in\mathcal R(T)\\ \mathbf n_{r}=n}}\tilde q^{J,t}_{T^0,\mathbf n}(\partial_{t}^{(\alpha)}(\{v_{n_{\beta}}\}_{\beta\in T^{\infty}})),
\end{equation}
where we define the action of $\partial_{t}^{(\alpha)}$ onto the set of functions $\{v_{n_{\beta}}\}_{\beta\in T^{\infty}}$ to be the same set of functions except for the $\alpha$ node where we replace $v_{n_{\alpha}}$ by the function $\partial_{t}v_{n_{\alpha}}$.

We control the first summand $\partial_{t}(N_{0}^{(J+1)}(v)(n))$ by Lemma \ref{finaal}. For the last summand of the RHS of (\ref{ppp}) we estimate its $L^{2}$ norm as follows
$$\sum_{T\in T(J)}\sum_{\alpha\in T^{\infty}}\sum_{\substack{\mathbf n\in\mathcal R(T)\\ \mathbf n_{r}=n}}\|\tilde{q}^{J,t}_{T^{0}}(N_{1}^{t,\alpha}(\{w_{n_{\beta}}\}_{\beta\in T^{\infty}}))\|_{2}\lesssim$$ 
 $$\sum_{T\in T(J)}\sum_{\alpha\in T^{\infty}}\sum_{\substack{\mathbf n\in\mathcal R(T)\\ \mathbf n_{r}=n}}\prod_{\beta\in T^{\infty}\setminus\{\alpha\}}\|v_{n_{\beta}}\|_{2}\ \frac{\|\partial_{t}v_{n_{\alpha}}\|_{2}}{\prod_{k=1}^{J}|\tilde{\phi}_{k}|},$$ 
 which by H\"older's inequality with exponents $\frac1{q}+\frac1{q'}=1$ and \eqref{factori} implies the upper bound
$$\frac1{J^{\frac{J}2}}\ \sum_{T\in T(J)}\sum_{\alpha\in T^{\infty}}\Big(\sum_{\substack{\mathbf n\in\mathcal R(T)\\ \mathbf n_{r}=n}}\prod_{\beta\in T^{\infty}\setminus\{\alpha\}}\|v_{n_{\beta}}\|_{2}^{q}\|\partial_{t}v_{n_{\alpha}}\|_{2}^{q}\Big)^{\frac1{q}}.$$
Then for the sum inside the parenthesis we apply Young's inequality in the discrete variable where for the first $2J$ functions we take the $l^{1}$ norm and for the last the $l^{\infty}$ norm we arrive at the estimate 
$$\|v\|_{M_{2,q}}^{2J}\sup_{n\in\Z}\|\partial_{t}v_{n}\|_{2}=\|v\|_{M_{2,q}}^{2J}\|\partial_{t}v_{n}\|_{l^{\infty}L^{2}}.$$
Since by (\ref{main3}) we have $\partial_{t}v_{n}=e^{it\partial_{x}^{4}}\Box_{n}(|u|^{2}u)$ it is straightforward to obtain
$$\|\partial_{t}v_{n}\|_{l^{\infty}L^{2}}\lesssim\|v\|_{M_{2,q}}^{3}.$$
Indeed, from (\ref{Sch}) and since $\Box_{n}(|u|^{2}u)$ is nicely localised it suffices to estimate 
$$\|\Box_{n}(|u|^{2}u)\|_{2}\lesssim\|\Box_{n}(|u|^{2}u)\|_{1}\lesssim\||u|^{2}u\|_{1}=\|u\|_{3}^{3}\lesssim\|u\|_{M_{2,q}^{s}}^{3}=\|v\|_{M_{2,q}^{s}}^{3},$$ 
where we used \eqref{Bern1}, Lemma \ref{Bern} and the embedding $M^{s}_{2,q}(\R)\hookrightarrow L^{3}(\R)$. Therefore, putting everything together we arrive at
$$\|N_{2}^{(J)}(v)\|_{l^{\infty}L^{2}}\lesssim\frac1{J^{\frac{J}2}}\|v\|_{M_{2,q}^{s}}^{2J+3},$$
which finishes the proof. 
\end{proof}

Observe that the assumption $M_{2,q}^{s}(\R)\hookrightarrow L^{3}(\R)$ implies that if $u$ is a solution of the biharmonic NLS \eqref{maineq} in the space $C([0,T],M_{2,q}^{s}(\R))$ then $u$ and hence $v=e^{it\partial_{x}^{4}}u$ are elements of $X_{T}\hookrightarrow C([0,T], L^{3}(\R)).$ Thus, the nonlinearity of the biharmonic NLS \eqref{maineq} makes sense as an element of $C([0,T], L^{1}(\R))$ and by \eqref{main3} we obtain that $\partial_{t}v_{n}\in C([0,T],L^{1}(\R))$. The next lemma justifies all the formal calculations that were performed in the previous sections (for a proof see e.g. \cite[Lemma 27]{NP}).

\begin{lemma}
\label{didi}
Let $f,\partial_{t}f\in C([0,T],L^{1}(\R^{d}))$ and define the distribution $\int_{\R^{d}}f(\cdot,x)dx$ by
$$\Big\langle \int_{\R^{d}}f(\cdot, x)dx, \phi\Big\rangle=\int_{\R}\int_{\R^{d}}f(t,x)\phi(t)dxdt,$$
for $\phi\in C^{\infty}_{c}(\R).$ Then, $\partial_{t}\int_{\R^{d}}f(\cdot,x)dx=\int_{\R^{d}}\partial_{t}f(\cdot,x)dx.$
\end{lemma}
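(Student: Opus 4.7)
The plan is to reduce the distributional equality to a pointwise one by exploiting the $C([0,T],L^{1}(\R^{d}))$ regularity of both $f$ and $\partial_{t}f$, via the fundamental theorem of calculus for Banach-space valued continuous functions. Set
\[
F(t):=\int_{\R^{d}}f(t,x)\,dx,\qquad G(t):=\int_{\R^{d}}\partial_{t}f(t,x)\,dx.
\]
Since integration against $1$ is a bounded linear functional $L^{1}(\R^{d})\to\C$, the continuity of $t\mapsto f(t,\cdot)$ and $t\mapsto\partial_{t}f(t,\cdot)$ as $L^{1}$-valued maps yields that $F$ and $G$ are continuous functions on $[0,T]$.

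First I would apply the Banach-valued fundamental theorem of calculus to $t\mapsto f(t,\cdot)\in L^{1}(\R^{d})$, whose $L^{1}$-valued derivative (in the strong sense) is by hypothesis the continuous function $t\mapsto\partial_{t}f(t,\cdot)$. This gives, for any $s,t\in[0,T]$, the $L^{1}$-valued Bochner identity
\[
f(t,\cdot)-f(s,\cdot)=\int_{s}^{t}\partial_{\tau}f(\tau,\cdot)\,d\tau.
\]
Next I would apply the bounded linear functional $g\mapsto\int_{\R^{d}}g(x)\,dx$ to both sides; since continuous linear functionals commute with Bochner integrals (equivalently, this is a plain application of Fubini's theorem on $[s,t]\times\R^{d}$, the integrand being in $L^{1}$ by the continuity of $\|\partial_{\tau}f(\tau,\cdot)\|_{L^{1}}$), I obtain
\[
F(t)-F(s)=\int_{s}^{t}G(\tau)\,d\tau.
\]
Since $G$ is continuous, this says that $F\in C^{1}([0,T])$ with classical derivative $F'=G$.

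Finally, from $F'=G$ pointwise the distributional identity follows: for any test function $\phi\in C^{\infty}_{c}(\R)$ supported in $(0,T)$,
\[
\Big\langle\partial_{t}\!\!\int_{\R^{d}}f(\cdot,x)\,dx,\phi\Big\rangle=-\int_{\R}F(t)\phi'(t)\,dt=\int_{\R}G(t)\phi(t)\,dt=\Big\langle\int_{\R^{d}}\partial_{t}f(\cdot,x)\,dx,\phi\Big\rangle,
\]
by integration by parts for the $C^{1}$ function $F$. The only subtle point, and the step that requires care, is the Bochner-valued FTC together with the justification that the scalar integral in $x$ commutes with the Bochner integral in $\tau$; both rely only on the hypothesis $f,\partial_{t}f\in C([0,T],L^{1}(\R^{d}))$, which makes all integrands absolutely integrable over $[s,t]\times\R^{d}$ and legitimises Fubini.
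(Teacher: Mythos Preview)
Your proof is correct. The paper does not actually supply its own proof of this lemma; it only refers the reader to \cite[Lemma~27]{NP}, so there is nothing in the present paper to compare your argument against. Your route via the Banach-space valued fundamental theorem of calculus for $t\mapsto f(t,\cdot)\in L^{1}(\R^{d})$, followed by applying the bounded linear functional $g\mapsto\int_{\R^{d}}g(x)\,dx$ and then a scalar integration by parts against $\phi$, is the standard and clean way to obtain the result. The only point worth making explicit is that the hypothesis $f,\partial_{t}f\in C([0,T],L^{1}(\R^{d}))$, with $\partial_{t}f$ understood distributionally, already forces $t\mapsto f(t,\cdot)$ to be strongly $C^{1}$ as an $L^{1}$-valued map (by the usual absolutely-continuous-representative argument for Banach-valued functions), which is what justifies your use of the identity $f(t,\cdot)-f(s,\cdot)=\int_{s}^{t}\partial_{\tau}f(\tau,\cdot)\,d\tau$; you allude to this but it is the one step that is not entirely formal.
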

Here is an application of the lemma. Consider \eqref{ttr} for fixed $n$ and $\xi.$ We want to apply Lemma \ref{didi} to the function
$$f(t,\xi_{1},\xi_{3})=\sigma_{n}(\xi)\ \frac{e^{it\Phi_{4}(\xi,\xi_{1},\xi-\xi_{1}-\xi_{3},\xi_{3})}}{i\Phi_{4}(\xi,\xi_{1},\xi-\xi_{1}-\xi_{3},\xi_{3})}\ \hat{v}_{n_{1}}(\xi_{1})\hat{\bar{v}}_{n_{2}}(\xi-\xi_{1}-\xi_{3})\hat{v}_{n_{3}}(\xi_{3}),$$
where $\xi\approx n, \xi_{1}\approx n_{1},\xi_{3}\approx n_{3}, \xi-\xi_{1}-\xi_{3}\approx -n_{2}$ and $(n,n_{1},n_{2},n_{3})\in A_{N}(n)^{c}$ given by \eqref{idid}. Notice that $f, \partial_{t}f \in C([0,T],L^{1}(\R^{2}))$ since $v\in C([0,T],M_{2,q}^{s}(\R))$ and $\partial_{t}v_{n}\in C([0,T],L^{1}(\R))$ for all integers $n$. Thus,
$$\partial_{t}\Big[\int_{\R^2}\sigma_{n}(\xi)\ \frac{e^{it\Phi_{4}(\xi,\xi_{1},\xi-\xi_{1}-\xi_{3},\xi_{3})}}{i\Phi_{4}(\xi,\xi_{1},\xi-\xi_{1}-\xi_{3},\xi_{3})}\ \hat{v}_{n_{1}}(\xi_{1})\hat{\bar{v}}_{n_{2}}(\xi-\xi_{1}-\xi_{3})\hat{v}_{n_{3}}(\xi_{3})d\xi_{1}d\xi_{3}\Big]=$$
$$\int_{\R^2}\sigma_{n}(\xi)\partial_{t}\Big[\frac{e^{it\Phi_{4}(\xi,\xi_{1},\xi-\xi_{1}-\xi_{3},\xi_{3})}}{i\Phi_{4}(\xi,\xi_{1},\xi-\xi_{1}-\xi_{3},\xi_{3})}\ \hat{v}_{n_{1}}(\xi_{1})\hat{\bar{v}}_{n_{2}}(\xi-\xi_{1}-\xi_{3})\hat{v}_{n_{3}}(\xi_{3})\Big]d\xi_{1}d\xi_{3}=$$
$$\int_{\R^2}\sigma_{n}(\xi)\partial_{t}\Big[\frac{e^{it\Phi_{4}(\xi,\xi_{1},\xi-\xi_{1}-\xi_{3},\xi_{3})}}{i\Phi_{4}(\xi,\xi_{1},\xi-\xi_{1}-\xi_{3},\xi_{3})}\Big]\hat{v}_{n_{1}}(\xi_{1})\hat{\bar{v}}_{n_{2}}(\xi-\xi_{1}-\xi_{3})\hat{v}_{n_{3}}(\xi_{3})d\xi_{1}d\xi_{3}+$$
$$\int_{\R^2}\sigma_{n}(\xi)\frac{e^{it\Phi_{4}(\xi,\xi_{1},\xi-\xi_{1}-\xi_{3},\xi_{3})}}{i\Phi_{4}(\xi,\xi_{1},\xi-\xi_{1}-\xi_{3},\xi_{3})}\partial_{t}\Big[\hat{v}_{n_{1}}(\xi_{1})\hat{\bar{v}}_{n_{2}}(\xi-\xi_{1}-\xi_{3})\hat{v}_{n_{3}}(\xi_{3})\Big]d\xi_{1}d\xi_{3}.$$
In the second equality we used the product rule which is applicable since $v\in C([0,T],L^{3}(\R))$ implies that $\partial_{t}v_{n}\in C([0,T],L^{1}(\R)).$

Finally it remains to justify the interchange of differentiation in time and summation in the discrete variable but this is done in exactly the same way as in \cite[Lemma 5.1]{GKO}. Similar arguments justify the interchange on the $J$th step of the infinite iteration procedure. 

Having proved these lemmata we define the partial sum operators $\Gamma_{v_{0}}^{(J)}$ as
\begin{equation}
\label{gamaa}
\Gamma_{v_{0}}^{(J)}v(t)=v_{0}+\sum_{j=2}^{J}N_{0}^{(j)}(v)(n)-\sum_{j=2}^{J}N_{0}^{(j)}(v_{0})(n)
\end{equation}
$$+\int_{0}^{t}R_{1}^{\tau}(v)(n)+R_{2}^{\tau}(v)(n)+\sum_{j=2}^{J}N_{r}^{(j)}(v)(n)+\sum_{j=1}^{J}N_{1}^{(j)}(v)(n)\ d\tau,$$
where we have $N_{1}^{(1)}:=N_{11}^{t}$ from \eqref{main13}, $N_{0}^{(2)}:=N_{21}^{t}$ from \eqref{nex}, $N_{1}^{(2)}:=N_{31}^{t}$ from \eqref{patel} and $N_{r}^{(2)}:=N_{4}^{t}$ from \eqref{patel2} and $v_{0}\in M_{2,q}(\R)$ is a fixed function. 

The argument from \cite[Subsection 2.3]{NP} shows that for sufficiently large $N$ and sufficiently small $T>0$ these operators $\Gamma^{(J)}_{v_{0}}$ are well defined in $X_{T}\coloneqq C([0,T],M_{2,q}(\R))$ for every $J\in\mathbb N\cup\{\infty\}$. We write $\Gamma_{v_{0}}$ for $\Gamma_{v_{0}}^{\infty}$. 

The differentiation by parts argument presented in the previous sections shows that if the function $v$ is sufficiently smooth or if the modulation space $M_{2,q}^{s}(\R)$ embeds in $L^{3}(\R)$ then a solution $v$ of the biharmonic NLS with initial data $v_{0}$ is a fixed point of the operator $\Gamma_{v_{0}}$, i.e.
\begin{equation}
\label{argg7}
v(t)=v_{0}+i\int_{0}^{t}N_{1}^{\tau}(v)-R_{1}^{\tau}(v)+R_{2}^{\tau}(v)\ d\tau=
\end{equation}
$$v_{0}+\sum_{j=2}^{\infty}N_{0}^{(j)}(v)(n)-\sum_{j=2}^{\infty}N_{0}^{(j)}(v_{0})(n)$$
$$+\int_{0}^{t}R_{1}^{\tau}(v)(n)+R_{2}^{\tau}(v)(n)+\sum_{j=2}^{\infty}N_{r}^{(j)}(v)(n)+\sum_{j=1}^{\infty}N_{1}^{(j)}(v)(n)\ d\tau=\Gamma_{v_{0}}v.$$

The important property of the $\Gamma_{v_{0}}$ operators is that if we are given two initial data $v_{0}^{(1)}$ and $v_{0}^{(2)}$ that are close in $M_{2,q}^{s}(\R)$ then if $v^{(1)}$ is the solution to the biharmonic NLS with initial data $v_{0}^{(1)}$ and $v^{(2)}$ is the solution with initial data $v_{0}^{(2)}$ we have
\begin{equation}
\label{done4222}
\|v^{(1)}-v^{(2)}\|_{X_{T}}=\|\Gamma_{v_{0}^{(1)}}v^{(1)}-\Gamma_{v_{0}^{(2)}}v^{(2)}\|_{X_{T}}\lesssim\|v_{0}^{(1)}-v_{0}^{(2)}\|_{M_{2,q}^{s}}.
\end{equation}

As it was done in \cite{CHKP1} and \cite{NP} for the cubic NLS (see also \cite{CHKP2}), the proof of Theorem \ref{th1} consists of approximating the initial data $v_{0}\coloneqq u_{0}$ by smooth functions $\{v_{0}^{(m)}\}_{m\in\mathbb N}$ in $M_{2,q}^{s}(\R)$, solving the biharmonic NLS for such $v_{0}^{(m)}$ in $X_{T}$, using the $\Gamma_{v_{0}^{(m)}}$ operators, with a smooth solution $v^{(m)}$, showing that $v^{(m)}$ have a common time of existence for all $m\in\mathbb N$ and that the sequence $\{v^{(m)}\}_{m\in\mathbb N}$ is Cauchy in $X_{T}$. The limit $v$ is the weak solution in the extended sense of the biharmonic NLS with initial data $v_{0}$ that we were trying to find. The nonlinearity $\mathcal N(u)$ for $u=e^{-it\partial_{x}^{4}}v$ is equal to $\lim_{n\to\infty}\mathcal N(u^{(m)})$ in the sense of distributions in $(0,T)\times\R$ where $u^{(m)}=e^{-it\partial_{x}^{4}}v^{(m)}$.

The proof of Theorem \ref{mainyeah} follows from \eqref{done4222} since if there are two solutions $u_{1}$ and $u_{2}$ with the same initial datum $u_{0}$ we obtain 
$$\|u_{1}-u_{2}\|_{X_{T}}=\|\Gamma_{u_{0}}u_{1}-\Gamma_{u_{0}}u_{2}\|_{X_{T}}\lesssim\|u_{0}-u_{0}\|_{M_{2,q}^{s}}=0.$$
 
\end{section}

\begin{section}{the general higher order nonlinear schr\"odinger equation}
\label{higherNLS478}

For $k\in\mathbb Z_{+}$ consider the following Cauchy problem of the higher order NLS
\begin{equation}
\label{highergenNLS}
\begin{cases} i\partial_{t}u-(-1)^{k}\partial_{x}^{2k}u\pm|u|^{2}u=0\\
u(0,x)=u_{0}(x).\\
\end{cases}
\end{equation}
It is straightforward to see that its phase factor is given by the function $\Phi_{2k}$ defined in \eqref{aaa23} which enjoys the following factorization.

\begin{proposition}
\label{factoriPhi375}
Under the assumption $\xi=\xi_{1}-\xi_{2}+\xi_{3}$ we have that
\begin{equation}
\label{polyharm}
\Phi_{2k}(\xi,\xi_{1},\xi_{2},\xi_{3})=(\xi_{1}-\xi_{2})(\xi_{3}-\xi_{2})P_{k}(\xi_{1},\xi_{2},\xi_{3})
\end{equation}
where $P_{k}(\xi_{1},\xi_{2},\xi_{3})\in\mathbb Z[\xi_{1},\xi_{2},\xi_{3}]$ is a non-negative homogeneous polynomial of degree $2k-2$ with only the trivial root. More precisely, we have the formula
\begin{equation}
\label{polyharmpolyn}
P_{k}(\xi_{1},\xi_{2},\xi_{3})=\sum_{m=1}^{2k-2}\xi_{3}^{m}\sum_{q=0}^{2k-2-m}\xi_{1}^{q}\xi_{2}^{2k-2-m-q}+\sum_{q=0}^{2k-2}\xi_{1}^{q}\xi_{2}^{2k-2-q}+ 
\end{equation}
$$\sum_{m=1}^{2k-1}\xi_{1}^{2k-1-m}\sum_{p=0}^{m-1}{m \choose p}(\xi_{1}-\xi_{2})^{m-p-1}\xi_{3}^{p}.$$
As a consequence, if $\xi=\xi_{1}-\xi_{2}+\xi_{3}$ then
\begin{equation}
\label{impoff421}
\left|\Phi_{2k}\right| \sim \max \left\{|\xi|,\left|\xi_{1}\right|,\left|\xi_{2}\right|,\left|\xi_{3}\right|\right\}^{2k-2}\left|\xi-\xi_{1} \| \xi-\xi_{3}\right|\gtrsim \left|\Phi_{2}\right|^{k}
\end{equation}
for all $k\in\mathbb N$.
\end{proposition}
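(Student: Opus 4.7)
The plan is to reduce the four-variable identity to a one-variable integral representation that makes both the factorization and the positivity transparent. Set $a\coloneqq \xi_1-\xi_2$ and $b\coloneqq \xi_3-\xi_2$. Under the constraint $\xi=\xi_1-\xi_2+\xi_3$ one has $\xi_1=\xi_2+a$, $\xi_3=\xi_2+b$ and $\xi=\xi_2+a+b$, and in particular $\xi-\xi_3=a$ and $\xi-\xi_1=b$. Writing
\begin{equation*}
\Phi_{2k}=\bigl[(\xi_2+a+b)^{2k}-(\xi_2+a)^{2k}\bigr]-\bigl[(\xi_2+b)^{2k}-\xi_2^{2k}\bigr]
\end{equation*}
and applying the fundamental theorem of calculus twice yields
\begin{equation*}
\Phi_{2k}(\xi,\xi_1,\xi_2,\xi_3)=2k(2k-1)\,ab\int_0^1\!\!\int_0^1\bigl((1-s-t)\xi_2+s\xi_1+t\xi_3\bigr)^{2k-2}\,ds\,dt.
\end{equation*}
This is the key identity; everything else follows from it.

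From the identity, $\Phi_{2k}$ is manifestly divisible by $ab=(\xi_1-\xi_2)(\xi_3-\xi_2)$, and the quotient
\begin{equation*}
P_k(\xi_1,\xi_2,\xi_3)=2k(2k-1)\int_0^1\!\!\int_0^1\bigl((1-s-t)\xi_2+s\xi_1+t\xi_3\bigr)^{2k-2}\,ds\,dt
\end{equation*}
is a homogeneous polynomial of degree $2k-2$ with integer coefficients (after expanding the binomial and performing the $(s,t)$-integrations). To recover the closed-form expression stated in \eqref{polyharmpolyn}, I would substitute $\xi=\xi_1-\xi_2+\xi_3$ in the telescoping
\begin{equation*}
\Phi_{2k}=(\xi-\xi_3)\sum_{m=0}^{2k-1}\xi^{2k-1-m}\xi_3^m-(\xi_1-\xi_2)\sum_{m=0}^{2k-1}\xi_1^{2k-1-m}\xi_2^m
\end{equation*}
and then factor out the common factor $\xi-\xi_3=\xi_1-\xi_2$, followed by a second telescoping in the $\xi_3-\xi_2$ direction after expanding $\xi^{2k-1-m}=(\xi_1-\xi_2+\xi_3)^{2k-1-m}$ by the binomial theorem. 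A careful bookkeeping of the resulting three types of monomials (those free of $\xi_3$, those involving pure powers of $\xi_3$, and the mixed terms $\binom{m}{p}(\xi_1-\xi_2)^{m-p-1}\xi_3^p$) reproduces the three sums in \eqref{polyharmpolyn}. This combinatorial identification is the main technical chore, though no obstacle in principle.

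For the positivity and the asymptotic \eqref{impoff421}, the integral formula is decisive: since $2k-2$ is even, the integrand is pointwise non-negative, so $P_k\geq 0$, and $P_k(\xi_1,\xi_2,\xi_3)=0$ forces $(1-s-t)\xi_2+s\xi_1+t\xi_3\equiv 0$ on $[0,1]^2$, which implies $\xi_1=\xi_2=\xi_3=0$. As $P_k$ is a continuous, non-negative, positively homogeneous function of degree $2k-2$ with only the trivial zero, compactness of the unit sphere in $(\xi_1,\xi_2,\xi_3)$ gives $P_k\sim \max(|\xi_1|,|\xi_2|,|\xi_3|)^{2k-2}$, and since $|\xi|\leq 3\max(|\xi_1|,|\xi_2|,|\xi_3|)$ by the constraint, the same bound holds with $\max\{|\xi|,|\xi_1|,|\xi_2|,|\xi_3|\}$. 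Multiplying by $|ab|=|\xi-\xi_1||\xi-\xi_3|$ gives the first equivalence in \eqref{impoff421}. For the comparison with $|\Phi_2|^k$, recall $|\Phi_2|=2|\xi-\xi_1||\xi-\xi_3|$ from \eqref{Phi2}, and both factors $|\xi-\xi_1|,|\xi-\xi_3|$ are bounded by $\max\{|\xi|,|\xi_1|,|\xi_2|,|\xi_3|\}$; hence $|\xi-\xi_1|^{k-1}|\xi-\xi_3|^{k-1}\lesssim \max^{2k-2}$, and combining with the previous estimate gives $|\Phi_{2k}|\gtrsim |\Phi_2|^k$.

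The main obstacle I anticipate is not any step of the above plan individually, but rather the clean identification of the combinatorial sums in \eqref{polyharmpolyn} with the expanded integral or telescoping expression; the counting of binomial contributions must be done with some care to match the exact indexing. Everything else — divisibility, positivity, and the sharp asymptotics — is essentially forced by the integral representation.
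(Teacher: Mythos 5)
Your argument is correct, but it takes a genuinely different route from the paper. The paper's proof divides $\Phi_{2k}$ by $\Phi_{2}=2(\xi_{1}-\xi_{2})(\xi_{3}-\xi_{2})$, homogenizes via $s=\xi_{1}+\xi_{3}$, $x=\xi_{1}/s$, $y=\xi_{2}/s$, and reduces everything to the symmetry and convexity of the one-variable function $f(t)=t^{2k}+(1-t)^{2k}$ (with l'Hospital at the degenerate points $x=y$ or $x+y=1$, and a separate remark for $s=0$); the lower bound in \eqref{impoff421} then comes from positivity of $P_{k}$ on the unit sphere, exactly as in your last step. Your Hermite--Genocchi-type representation
\begin{equation*}
\Phi_{2k}=2k(2k-1)\,(\xi_{1}-\xi_{2})(\xi_{3}-\xi_{2})\int_{0}^{1}\!\!\int_{0}^{1}\bigl((1-s-t)\xi_{2}+s\xi_{1}+t\xi_{3}\bigr)^{2k-2}\,ds\,dt
\end{equation*}
instead delivers divisibility, nonnegativity, and the fact that the only real zero of $P_{k}$ is the trivial one in a single stroke (vanishing of the affine integrand on $[0,1]^{2}$ forces $\xi_{1}=\xi_{2}=\xi_{3}=0$), uniformly in $k$ and without case distinctions, which is arguably cleaner; what the paper's route buys is a purely algebraic/elementary-calculus argument with no integral representation. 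Two small remarks: integrality of the coefficients of $P_{k}$ is more directly seen not from the $(s,t)$-integrations but from exact division of $\Phi_{2k}\in\Z[\xi_{1},\xi_{2},\xi_{3}]$ by $\xi_{1}-\xi_{2}$ and $\xi_{3}-\xi_{2}$, which are monic in $\xi_{1}$ resp.\ $\xi_{3}$; and you leave the closed form \eqref{polyharmpolyn} as unfinished combinatorial bookkeeping --- note, however, that the paper's own proof does not derive that formula either, and it is never used downstream: only the factorization, the strict positivity away from the origin, and \eqref{impoff421} enter the rest of the argument, and those you establish completely.
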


\begin{proof}
As $\xi^{2}-\xi_{1}^{2}+\xi_{2}^{2}-\xi_{3}^{2} = 2(\xi_1-\xi_2)(\xi_3-\xi_2)$, we may write
\begin{equation}
\frac{1}{2}\ P_{k}\left(\xi_{1}, \xi_{2}, \xi_{3}\right) = \frac{\xi^{2 k}-\xi_{1}^{2 k}+\xi_{2}^{2 k}-\xi_{3}^{2 k}}{\xi^{2}-\xi_{1}^{2}+\xi_{2}^{2}-\xi_{3}^{2}}= 
\end{equation}
$$s^{2k-2}\ \frac{y^{2 k}+(1-y)^{2k}-(x^{2k} + (1-x)^{2k})}{y^{2}+(1-y)^{2}-(x^{2} + (1-x)^{2})}.$$
where we set $s = \xi + \xi_2 = \xi_1 + \xi_3$, $x = \xi_1/s$ and $y = \xi_2/s$. Without loss of generality $s \neq 0$, else we see directly from the above expression that $P_k$ can only be zero when $\xi_1 = \xi_2 = \xi_3 = 0$. Consider the function $f(t) = t^{2k} + (1-t)^{2k}$. Then $f(t)$ is symmetric around $t = \frac{1}{2}$ and a change of coordinate $u = t - \frac{1}{2}$ shows that
\begin{equation}
f(t(u)) = \left(u+\frac{1}{2}\right)^{2k} + \left(u-\frac{1}{2}\right)^{2k} = \sum_{l=0}^{2k} {2k \choose 2l}u^{2(k-l)}2^{-2l+1}.
\end{equation}
This implies that $f$ is convex and strictly decreasing (respectively increasing) when $t < \frac{1}{2}$ (respectively $t > \frac{1}{2}$). Hence, $f(x) = f(y)$ can only hold if $x = y$ or $x + y = 1$, which shows that $\xi_3 = \xi_2$, respectively $\xi_1 = \xi_2$. Using l'Hospital's rule, one easily sees that in either case $P_k \neq 0$ for nontrivial $(\xi_1,\xi_2,\xi_3)$ whenever $k \geq 2$.

In order to prove \eqref{impoff421}, note that both $\Phi_{2k}$ and $P_k$ are homogenous polynomials, and therefore, it is enough to show the required estimate on the unit circle. Also note that, since there is at least one positive value (choose $(1,-1,1)$ for example), we have $P_k > 0$ on $\R^3 \setminus \{0\}$. Notice that $P_k \lesssim \max(|\xi_1|,|\xi_2|,|\xi_3|)^{2k-2}$ is true for any polynomial $P_{k}$ of degree $2k-2$. The other direction in this estimate then holds because $P_k$ attains a strictly positive minimum on the sphere. To conclude the proof observe that $\Phi_2^{k-1} \lesssim \max(|\xi_1|,|\xi_2|,|\xi_3|)^{2k-2}$ since $\Phi_2^{k-1}$ is a polynomial of degree $2k-2$.
\end{proof}

\begin{remark}
In the case where $k=1$ the polynomial is $P_{1}(\xi_{1},\xi_{2},\xi_{3})=2$, whereas in the case $k=2$ the polynomial is
$$P_{2}(\xi_{1},\xi_{2},\xi_{3})=\xi_{1}^{2}+\xi_{2}^{2}+\xi_{3}^{2}+(\xi_{1}-\xi_{2}+\xi_{3})^{2}+2(\xi_{1}+\xi_{3})^{2},$$ 
which is proved in \cite[Lemma 3.1]{OT}. Using \eqref{polyharmpolyn} for the case $k=3$ after some tedious but elementary calculations we obtain the following expression for $P_{3}$:
\begin{equation}
\label{factpolynharm4}
P_{3}(\xi_{1},\xi_{2},\xi_{3})=\xi_{2}^{4}+(\xi_{1}-\xi_{2}+\xi_{3})^{4}+\frac12(\xi_{1}^{2}+\xi_{3}^{2})\Big[(\xi_{1}-\xi_{2}+\xi_{3})^{2}+\xi_{1}^{2}+\xi_{2}^{2}+\xi_{3}^{2}\Big]+
\end{equation}
$$2(\xi_{1}+\xi_{3})^{2}\Big[(\xi_{1}-\xi_{2}+\xi_{3})^{2}+\xi_{2}^{2}\Big]+2(\xi_{1}+\xi_{3})(\xi_{1}^{3}+\xi_{3}^{3}).$$
Notice that all summands are positive since all of them are even powers except the very last summand which is also positive but for a different reason: $\xi_{1}+\xi_{3}$ and $\xi_{1}^{3}+\xi_{3}^{3}$ have the same sign. Thus, for the polynomial $P_{3}(\xi_{1},\xi_{2},\xi_{3})$ the trivial root $(0,0,0)$ is the only real root. 
\end{remark}

Because of \eqref{impoff421}, it is evident that all the calculations presented in Sections \ref{firststeps}, \ref{treesandinduction39} and \ref{cbgg342} go through for the general higher order NLS \eqref{highergenNLS}, that is Theorems \ref{th1} and \ref{mainyeah} (and Remarks \ref{explncases}, \ref{unconposed}) hold true. The same reasoning applies to the mixed order equation \eqref{higheryes} described in Remark \ref{sixthhigher23} since the phase factors sum up as
\begin{equation}
\label{sumupall7399}
\sum_{j=1}^{M}\epsilon_{j}\Phi_{2j}(\xi,\xi_{1},\xi_{2},\xi_{3})=(\xi_{1}-\xi_{2})(\xi_{1}-\xi_{3})\Big(\sum_{j=1}^{M}\epsilon_{j}P_{j}(\xi_{1},\xi_{2},\xi_{3})\Big)
\end{equation}
whenever $\xi=\xi_{1}-\xi_{2}+\xi_{3}$.

\textbf{Acknowledgments}: Funded by the Deutsche Forschungsgemeinschaft (DFG, German Research
Foundation) - Project-ID 258734477 - SFB 1173.

\end{section}

\end{document}